\documentclass[12pt, a4]{amsart}
\usepackage{latexsym,amsmath,amssymb,amsthm,mathrsfs,color}
\usepackage{float}
\usepackage{graphicx}
\usepackage{marginnote}
\usepackage{scalerel}
\usepackage{stackengine,wasysym}
\usepackage[bbgreekl]{mathbbol}
\usepackage{bbm}
\usepackage{tikz-cd}
\usetikzlibrary{graphs,decorations.pathmorphing,decorations.markings}
\usepackage[all]{xy}
\usepackage{stmaryrd}
\usepackage{chngcntr}
\counterwithin{figure}{section}
\usepackage{hyperref}
\hypersetup{colorlinks,%
	citecolor=brown,%
	filecolor=black,%
	linkcolor=blue,%
	urlcolor=black}
\usepackage[toc,page]{appendix}

\usepackage[shortalphabetic]{amsrefs}
\makeatletter
\def\append@label@year@{%
	\safe@set\@tempcnta\bib@year
	\edef\bib@citeyear{\the\@tempcnta}%
	\ifnum\bib@citeyear>9
	\append@to@stem{%
		\ifx\bib@year\@empty
		\else
		\@xp\year@short \bib@citeyear \@nil
		\fi
	}%
	\fi
}
\makeatother

\setcounter{tocdepth}{1}
\let\oldtocsection=\tocsection
\renewcommand{\tocsection}[2]{\hspace{0em}\oldtocsection{#1}{#2}}

\usepackage[a4paper]{geometry}

\geometry{top=1in, bottom=1in, left=1in, right=1in}

\def\upddots{\mathinner{\mkern 1mu\raise 1pt \hbox{.}\mkern 2mu
		\mkern 2mu \raise 4pt\hbox{.}\mkern 1mu \raise 7pt\vbox {\kern 7
			pt\hbox{.}}} }

\numberwithin{equation}{section}

\begin{document}
	\setlength{\unitlength}{2.5cm}
	\newtheorem{thm}{Theorem}[section]
	\newtheorem{lm}[thm]{Lemma}
	\newtheorem{prop}[thm]{Proposition}
	\newtheorem{cor}[thm]{Corollary}
	\newtheorem{conj}[thm]{Conjecture}
	\newtheorem{specu}[thm]{Speculation}
	
	\theoremstyle{definition}
	\newtheorem{dfn}[thm]{Definition}
	\newtheorem{eg}[thm]{Example}
	\newtheorem{rmk}[thm]{Remark}
	\newcommand{\ome}{\varpi}
	\newcommand{\F}{\mathbf{F}}
	\newcommand{\N}{\mathbbm{N}}
	\newcommand{\R}{\mathbbm{R}}
	\newcommand{\C}{\mathbbm{C}}
	\newcommand{\Z}{\mathbbm{Z}}
	\newcommand{\Q}{\mathbbm{Q}}
	\newcommand{\Mp}{{\rm Mp}}
	\newcommand{\Sp}{{\rm Sp}}
	\newcommand{\GSp}{{\rm GSp}}
	\newcommand{\GL}{{\rm GL}}
	\newcommand{\PGL}{{\rm PGL}}
	\newcommand{\SL}{{\rm SL}}
	\newcommand{\SO}{{\rm SO}}
	\newcommand{\Spin}{{\rm Spin}}
	\newcommand{\GSpin}{{\rm GSpin}}
	\newcommand{\Ind}{{\rm Ind}}
	\newcommand{\Res}{{\rm Res}}
	\newcommand{\Hom}{{\rm Hom}}
	\newcommand{\End}{{\rm End}}
	\newcommand{\msc}[1]{\mathscr{#1}}
	\newcommand{\mfr}[1]{\mathfrak{#1}}
	\newcommand{\mca}[1]{\mathcal{#1}}
	\newcommand{\mbf}[1]{{\bf #1}}
	\newcommand{\mbm}[1]{\mathbbm{#1}}
	\newcommand{\into}{\hookrightarrow}
	\newcommand{\onto}{\twoheadrightarrow}
	\newcommand{\s}{\mathbf{s}}
	\newcommand{\cc}{\mathbf{c}}
	\newcommand{\bfa}{\mathbf{a}}
	\newcommand{\id}{{\rm id}}
	\newcommand{\g}{ \mathbf{g} }
	\newcommand{\w}{\mathbbm{w}}
	\newcommand{\Ftn}{{\sf Ftn}}
	\newcommand{\p}{\mathbf{p}}
	\newcommand{\bq}{\mathbf{q}}
	\newcommand{\WD}{\text{WD}}
	\newcommand{\W}{\text{W}}
	\newcommand{\Wh}{{\rm Wh}}
	\newcommand{\Whc}{{{\rm Wh}_\psi}}
	\newcommand{\ggma}{\omega}
	\newcommand{\sct}{\text{\rm sc}}
	\newcommand{\Of}{\mca{O}^\digamma}
	\newcommand{\gk}{c_{\sf gk}}
	\newcommand{\Irr}{ {\rm Irr} }
	\newcommand{\Irrg}{ {\rm Irr}_{\rm gen} }
	\newcommand{\diag}{{\rm diag}}
	\newcommand{\uchi}{ \underline{\chi} }
	\newcommand{\Tr}{ {\rm Tr} }
	\newcommand{\der}\de
	\newcommand{\Stab}{{\rm Stab}}
	\newcommand{\Ker}{{\rm Ker}}
	\newcommand{\bfp}{\mathbf{p}}
	\newcommand{\bfq}{\mathbf{q}}
	\newcommand{\KP}{{\rm KP}}
	\newcommand{\Sav}{{\rm Sav}}
	\newcommand{\de}{{\rm der}}
	\newcommand{\tnu}{{\tilde{\nu}}}
	\newcommand{\lest}{\leqslant}
	\newcommand{\gest}{\geqslant}
	\newcommand{\tu}{\widetilde}
	\newcommand{\tchi}{\tilde{\chi}}
	\newcommand{\tomega}{\tilde{\omega}}
	\newcommand{\Rep}{{\rm Rep}}
	\newcommand{\cu}[1]{\textsc{\underline{#1}}}
	\newcommand{\set}[1]{\left\{#1\right\}}
	\newcommand{\ul}[1]{\underline{#1}}
	\newcommand{\ol}[1]{\overline{#1}}
	\newcommand{\wt}[1]{\widetilde{#1} }
	\newcommand{\wtsf}[1]{\wt{\sf #1}}
	\newcommand{\anga}[1]{{\left\langle #1 \right\rangle}}
	\newcommand{\angb}[2]{{\left\langle #1, #2 \right\rangle}}
	\newcommand{\wm}[1]{\wt{\mbf{#1}}}
	\newcommand{\elt}[1]{\pmb{\big[} #1\pmb{\big]} }
	\newcommand{\ceil}[1]{\left\lceil #1 \right\rceil}
	\newcommand{\floor}[1]{\left\lfloor #1 \right\rfloor}
	\newcommand{\val}[1]{\left| #1 \right|}
	\newcommand{\aff}{ {\rm aff} }
	\newcommand{\ex}{ {\rm ex} }
	\newcommand{\exc}{ {\rm exc} }
	\newcommand{\HH}{ \mca{H} }
	\newcommand{\HKP}{ {\rm HKP} }
	\newcommand{\std}{ {\rm std} }
	\newcommand{\motimes}{\text{\raisebox{0.25ex}{\scalebox{0.8}{$\bigotimes$}}}}
	
	\newcommand{\FF}{\mca{F}}
	\newcommand{\tv}{\tilde{v}}
	\newcommand{\betaa}{\pmb{\beta}}
	\newcommand{\deltaa}{\pmb{\delta}}
	\newcommand{\bepsilon}{\bar{\epsilon}}

	\title[\resizebox{5.5in}{!}{Hecke algebras for tame genuine principal series and local shimura correspondence}]{Hecke algebras for tame genuine principal series and local shimura correspondence}

	\author{Runze Wang}
	\address{School of Mathematical Sciences, Zhejiang University, 866 Yuhangtang Road, Hangzhou, China 310058}
	\email{wang\underline{\ }runze@zju.edu.cn }

	\date{}
	\subjclass[]{}
	\keywords{covering groups, types, Hecke algebras, local Shimura correspondence}
	\maketitle
	
	\begin{abstract} 
		In this paper, we extend the local Shimura correspondence building upon the groundwork laid by Gordan Savin. As preparations, we review part of the type theory of Bushnell and Kutzko which equally applies to covering groups. By adapting the method of linear algebraic groups, we describe the structure of Hecke algebras associated with genuine principal series components and construct the types. In particular, we show each of them shares the same affine Hecke algebra part as one corresponding Hecke algebra of the principal endoscopy group. However,  we give a counter example to show they are not isomorphic in general.
	\end{abstract}
	
	\tableofcontents

	\section{Introduction} \label{S:intro}
	
	Let $ p $ be a prime number and $ F $ be a finite extension of $ \mathbbm{Q}_p $ with discrete valuation $ |\cdot|_F $. Let $ O_F \subset F$ be the ring of algebraic integers and $ \mathfrak{p}_F $ be its prime ideal. Let $ \kappa_{F}=O_F/\mathfrak{p}_F $ be the residue field with cardinality $ q $. Let $ \varpi \in O_F$ be a fixed uniformizer with $ |\varpi|_F=q^{-1} $.
	
	Let $ G= \mathbf{G} (F) $ be the group of $ F $-rational points of a connected split reductive group $  \mathbf{G}  $ over $ F $. Let $ \mathbf{T} \subset \mathbf{G} $ be a fixed maximal torus and $ \mathbf{B} $ be a Borel subgroup containing $ \mathbf{T} $. Write $ T $ and $ B $ for the corresponding groups of $ F $-rational points. Denote by $ \mathfrak{R}(G) $ the category of smooth complex representations of $ G $. Consider the Bernstein decomposition $$ \mathfrak{R}(G)=\prod_{\mathfrak{s} \in \mathfrak{B}(G)} \mathfrak{R}_{\mathfrak{s}}(G) ,$$where $ \mathfrak{B}(G) $ is an indexing set consisting of irreducible supercuspidal representations of Levi subgroups of $ G $ up to equivalence. 
	
	In \cite{Ro}, Roche constructs a type $ (J_\chi, \rho_\chi) $ for $ G $ in the sense of Bushnell and Kutzko \cite{BK} for a principal series component of $ \mathfrak{R}(G) $ determined by the character $ \chi: \mathbf{T}(O_F) \rightarrow \C^{\times} $. Precisely speaking, this means that a smooth irreducible representation $ (\pi, V_\pi) $, when restricted to $ J_\chi $, contains $ \rho_\chi $ if and only if $ \pi $ is equivalent to a subquotient of the normalized parabolic induction $ \text{Ind}_{B}^{G}(\widetilde{\chi}) $ where $ \widetilde{\chi} $ is a character of $ T $ extending $ \chi $. Denote by $ \mathcal{H}(G,\rho_\chi)  $ the convolution algebra of compactly supported, $ \rho_{\chi}^{-1} $-spherical functions on $ G $. When $ \chi $ is a depth zero character, i.e., $ \chi $ factors through $ \mathbf{T}(O_F) \rightarrow \mathbf{T}(\kappa_{F}) $, the structure of $ \mathcal{H}(G,\rho_\chi) $ is described in \cite{Go,Mo}. Roche extends the results to depth positive cases with mild restrictions on $ p $.
	
	Now we shift our focus on the covering groups. Assume that $ F $ contains the full group $ \mu_n $ of $ n $-th roots of unity. Consider a Brylinski--Deligne central extension
	$$\begin{tikzcd}
		\mu_n \ar[r, hook] & \overline{G} \ar[r, two heads] & G
	\end{tikzcd}$$
	of $ G $ by $ \mu_n $ which is incarnated by the pair $ (D, \eta= \mathbb{1}) $, see \cite{BD, GG, We6} for details. We are interested in the $ \epsilon $-genuine representations of $ \ol{G} $, where $ \mu_n $ always acts via a fixed embedding $ \epsilon: \mu_n \rightarrow \C^{\times} $.
	
	It is natural to explore the type theory and the corresponding Hecke algebras for the genuine Bernstein components of $ \ol{G} $. Savin initiated a study on the genuine unramified principal series component in \cite{Sav88}. Assume that $ p \nmid n $ and thus fix a splitting $ s $ of $ \ol{G} $ over $ \mathbf{G}(O_F) $. This gives a splitting of $ \ol{G} $ over the standard Iwahori subgroup $ I \subset \mathbf{G}(O_F)$. Let $ \rho: \ol{I} \rightarrow \C^{\times} $ be the genuine character of $ \ol{I} $ such that $ \rho(\zeta i)=\epsilon(\zeta) $ for all $ \zeta \in \mu_n $ and $ i \in s(I) $. There is an isomorphism known as ``local Shimura correspondence" between the genuine Iwahori Hecke algebra $ \mathcal{H}(\ol{G},\rho)$ and the Iwahori Hecke algebra $ \mathcal{H}(G_{Q,n},\mathbb{1}_{I_{Q,n}}) $, where $ G_{Q,n} $ is a linear algebraic group with the same dual group as $ \ol{G} $, see \cite{Sav04,GG}. Using a pivotal theorem  attributed to \cite[Theorem 8.3]{BK}, it is immediate that $ (\ol{I},\rho) $ is a type for the genuine unramified principal series component of $ \ol{G} $.
	
	As suggested in \cite[\S17]{GG}, there should perhaps be analogous results for other genuine Bernstein components of $ \ol{G} $ as well. This is the motivating point of this paper.
	
	We briefly outline the structure of this article below. 
	
	In Section 2, we introduce the basic set-up about Brylinski--Deligne covering groups. We recall some properties of the Weyl group and root system. We also review the structure of genuine pro-$ p $ Iwahori Hecke alegbra studied in \cite{GGK}.
		
	In Section 3, we review the Bushnell--Kutzko type theory, which is applicable to covering groups. We discuss the construction of types for covering torus.
	
	The fourth section and the fifth section contain the main results of this paper.
	\begin{thm} \label{Main THM}
		$ 	\text{\rm (Theorem \ref{type}, Theorem \ref{LSC})} $. Apply the restrictions to $ p $ as outlined in  $\text{\rm \S 4.2}$ and assume $ p \nmid n $. Fix a splitting $ s $ of $ \ol{G} $ over $ \mathbf{G}(O_F) $. Let $ \chi $ be a genuine character of $ \ol{\mathbf{T}}(O_F) $ and $ \pi_{\chi} $ be a genuine representation of $ \ol{T} $ such that $ \pi_{\chi}|_{\ol{\mathbf{T}}(O_F)} $ contains $ \chi $. Let $ T_{Q,n} $ be the maximal torus of $ G_{Q,n} $ and $ \mathbf{T}_{Q,n}(O_F) $ be the maximal compact subgroup in $T_{Q,n} $. Let $  \chi_{Q,n}: \mathbf{T}_{Q,n}(O_F) \rightarrow \C^{\times} $ be given by $ \chi_{Q,n}(y(a))=\chi(s(y(a))) $ for $ y \in Y_{Q,n} $ and $ a \in O_{F}^{\times} $. Let $ \pi_{\chi_{Q.n}} $ be an extension of $ \chi_{Q.n} $ to $ T_{Q,n} $.
		
		(i) There is an explicitly constructed pair $ (\ol{J_{\chi}},\rho_{\chi}) $ that is a $ [\ol{T},\pi_\chi]_{\ol{G}} $-type of $ \ol{G} $.
		
		(ii) Let $ (J_{\chi_{Q,n}},\rho_{\chi_{Q,n}}) $ be the type constructed with respect to  $ [T_{Q,n}, \pi_{\chi_{Q,n}} ]_{G_{Q,n}}$. There is a $ \C $-algebra isomorphism $$  \Upsilon:\mathcal{H}(W_{\chi_{Q,n},\text{\rm ex}}) \rightarrow \mathcal{H}(W_{\chi,\text{\rm ex}}), $$ where $ \mathcal{H}(W_{\chi_{Q,n},\text{\rm ex}}) $ is the affine Hecke algebra part in $ \mathcal{H}(G_{Q,n},\rho_{\chi_{Q,n}}) $ and $ \mathcal{H}(W_{\chi,\text{\rm ex}}) $ is a subalgebra in $ \mathcal{H}(W_{\chi,\text{\rm ex}}) $.
		
	\end{thm}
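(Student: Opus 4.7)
The plan is to transfer Roche's construction \cite{Ro} to the covering setting through the splitting $s:\mathbf{G}(O_F)\rightarrow\ol{G}$, and then derive the Hecke algebra isomorphism by comparing Bernstein--Lusztig style presentations on the two sides, exploiting the fact that the principal endoscopy group $G_{Q,n}$ is designed to have the same dual-group data as the relevant quotient of $\ol{G}$.

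\textbf{Part (i).} First I define $J_\chi \subset \mathbf{G}(O_F)$ following Roche: for each root $\alpha$ I compute the depth of $\chi \circ \alpha^\vee$, take the corresponding Moy--Prasad root-group filtration $U_{\alpha,r_\alpha}$, and assemble these together with an appropriate congruence subgroup $T_\chi^0$ of $\mathbf{T}(O_F)$ into a subgroup with an Iwahori-type factorisation $J_\chi = U_\chi^- T_\chi^0 U_\chi^+$. Using the splitting I set $\ol{J_\chi} = \mu_n \cdot s(J_\chi)$, which is well defined because $s$ splits $\ol{G}$ over all of $\mathbf{G}(O_F)$. The character $\rho_\chi$ is obtained by pulling Roche's character on $J_\chi$ back through $s$ and extending genuinely by $\epsilon$ on $\mu_n$. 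To verify the type property I realise $(\ol{J_\chi},\rho_\chi)$ as a $\ol{G}$-cover of the $\ol{T}$-type constructed in Section 3, and then invoke \cite[Theorem 8.3]{BK}; the cover hypothesis reduces to checking the Iwahori factorisation together with the intertwining property of strongly positive elements of $\ol{T}$, which in turn ensures injectivity of the transfer map into $\mca{H}(\ol{G},\rho_\chi)$, exactly as in the linear case.

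\textbf{Part (ii).} Both $\mca{H}(G_{Q,n},\rho_{\chi_{Q,n}})$ and $\mca{H}(\ol{G},\rho_\chi)$ admit Bernstein--Lusztig style presentations: each decomposes as $\mca{H}(W_{*,\text{ex}}) \otimes \C[C_*]$, where $W_{*,\text{ex}}$ is generated by those affine reflections whose quadratic parameter is non-trivial and $C_*$ is a complementary R-group-like piece. The construction of $G_{Q,n}$ ensures that its coroot lattice agrees with $Y_{Q,n}$ on the covering side, and the stabiliser conditions defining $W_{\chi,\text{ex}}$ transfer verbatim under $y(a) \mapsto s(y(a))$. Accordingly, I define $\Upsilon$ on the standard Bernstein generators $T_w$ and $\theta_y$ of the affine part and verify that the quadratic and braid relations are preserved: the braid relations reduce to rank-two root subsystems shared between $\ol{G}$ and $G_{Q,n}$ by the principal endoscopy construction, and the quadratic relations to a rank-one calculation comparing $\SL_2$ to its $n$-fold cover.

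\textbf{Main obstacle.} The hardest step is the rank-one computation in the covering setting: determining the quadratic parameter of each simple affine reflection requires integrating $\rho_\chi$ over double cosets of the form $s(U_\alpha) \cdot s(\dot w_\alpha) \cdot s(U_\alpha)$ inside $\ol{G}$, and this integration picks up Brylinski--Deligne cocycle terms whenever the product of sectioned elements leaves the image of $s$. The key technical work is to show that these cocycle contributions, after applying the correspondence $y(a) \mapsto s(y(a))$, reproduce exactly the parameters of the corresponding rank-one calculation for $G_{Q,n}$; this is the technical heart of the ramified local Shimura correspondence and forces the restrictions on $p$ in \S 4.2, which guarantee both that Roche's filtration-subgroup construction is available and that $p \nmid n$ so $s$ interacts cleanly with the affine root groups.
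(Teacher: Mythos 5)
Your part (i) follows the paper's architecture closely: define $J_\chi$ by Roche's filtration data, lift via the splitting $s$, realise $(\ol{J_\chi},\rho_\chi)$ as a $\ol{G}$-cover of the torus type, and invoke the covering-group version of Bushnell--Kutzko \cite[Theorem 8.3]{BK}. You are a bit vague on what ``exactly as in the linear case'' means for the crucial condition (iii) of the cover: the paper reduces to depth zero through the isomorphism $\mathcal{H}(\ol{G},\rho_\chi)\simeq\mathcal{H}(\ol{L},\rho_{\chi\otimes\chi_1}^{\ol{L}})$ (Theorem \ref{thm:induction}), and only then establishes invertibility of the element supported on $\ol{J_\chi}\,n\,\ol{J_\chi}$ by writing it as a product of the explicit $E_{\ol{w}_a}$'s and the $\Omega_\chi$-part, using the structure theorem (Theorem \ref{thm:main}). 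That structure theorem is what makes the invertibility genuinely checkable; worth acknowledging rather than deferring to the linear case.

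Your part (ii) takes a materially different route from the paper and has a gap. You assert a tensor decomposition $\mathcal{H}(W_{*,\text{ex}})\otimes\C[C_*]$ of the full Hecke algebra and then propose to define $\Upsilon$ on Bernstein--Lusztig generators $T_w$, $\theta_y$. The paper never establishes such a decomposition, and in fact it cannot hold in the tensor-product form you write: the relevant structure is $W_\chi=W_\chi^0\rtimes\Omega_\chi$, so the algebra is a crossed product at best, and the counterexample at the end of the paper shows the full algebras $\mathcal{H}(\ol{G},\rho_\chi)$ and $\mathcal{H}(G_{Q,n},\rho_{\chi_{Q,n}})$ are not isomorphic in general, which would be hard to reconcile with a uniform tensor factorisation. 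What the paper actually does is work with the Iwahori--Matsumoto basis $e_w$ (normalised $E_{\dot w}$, twisted by a carefully chosen character extension $\wt\chi$ of $\text{pr}^{-1}(W_{\chi,\text{ex}})$ -- the content of Lemma 5.2, which you omit), and the heart of the argument is Proposition \ref{prop:shift}: there is a translation $\mathsf{t}_v$ carrying $\Phi_{\chi,\text{af}}$ to the ``integral'' system $\Phi^\diamondsuit_{\chi,\text{af}}$, producing the isomorphism $\Psi:W_{\chi_{Q,n},\text{ex}}\to W_{\chi,\text{ex}}$ of Coxeter data. Once $\Psi$ is in hand, both sides have the same parameter $q$ on every simple affine reflection by Theorem \ref{thm:main}(ii), so the isomorphism $\Upsilon(e^{Q,n}_w)=e_{\Psi(w)}$ is automatic at the level of Iwahori--Matsumoto presentations; there is no nontrivial rank-one parameter comparison to perform, and the Brylinski--Deligne cocycle contributions you flag as the ``technical heart'' are absorbed into the normalisation of the $E$-basis already in Lemma \ref{lm3}. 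You correctly sense that the cover introduces cocycle corrections, but you misidentify where the real work lies: it is in producing $\mathsf{t}_v$ and the extension $\wt\chi$, not in integrating over rank-one double cosets.
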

	 In Section 4, we define the pair $ (\ol{J_{\chi}},\rho_{\chi}) $ for a genuine character $ \chi $ of $ \ol{\mbf{T}}(O_F) $ following \cite{Ro}. We describe the structure of $  \mathcal{H}(\ol{G},\rho_{\chi})$ in Theorem \ref{thm:main} and Theorem \ref{thm:induction}, first considering the case of depth zero and then handling the general case with mild restrictions on $ p $ by induction. We show that $ (\ol{J_{\chi}},\rho_{\chi}) $ is a $ [\ol{T},\pi_\chi]_{\ol{G}} $-type, where $ \pi_\chi$ is a genuine representation of $ \ol{T} $ such that $ \pi_{\chi}|_{\ol{\mathbf{T}}(O_F)} $ contains $ \chi $. 
	 
     In Section 5, we discuss the relation between $\mathcal{H}(\ol{G},\rho_{\chi})  $ and a Hecke algebra $ \mathcal{H}(G_{Q,n},\rho_{\chi_{Q,n}}) $ of the linear algebraic group $ G_{Q,n} $, which generalizes the relevant results in \cite{Sav04}. We show the algebra $ \mathcal{H}(G_{Q,n},\rho_{\chi_{Q,n}}) $ shares the same affine Hecke algebra part as $\mathcal{H}(\ol{G},\rho_{\chi})  $.   However, they are not isomorphic in general and we give a counter example to show that.
	~\\
	\paragraph{\textbf{Acknowledgement}} I would like to thank Fan Gao for his advice and suggestions. This work would not be possible without his careful guidance. Thanks are also due to Edmund Karasiewicz for helpful comments and informative discussions.

	\section{Preliminaries}
	\subsection{Brylinski--Deligne central extension}
	Let $ \mathbf{G} $ be a split connected linear reductive group over $ F $ with root datum $$ (X,\Phi,\Delta; \ Y, \Phi^\vee, \Delta^\vee) ,$$ where $ X $ is the character lattice and $ Y $ is the cocharacter lattice of a fixed maximal split torus $ \mathbf{T} \subset \mathbf{G} $. Here $ \Delta $ is a choice of simple roots from the set $ \Phi $ of roots. 
	
	Let $ \mathbf{B}=\mathbf{T}\mathbf{U} $ be the Borel subgroup associated with $ \Delta $. Let $ W=N(\mathbf{T})/\mathbf{T} $ be the Weyl group of $ (\mathbf{G},\mathbf{T}) $. We fix a Chevalley-Steinberg system $ \{e_\alpha:\mathbf{G}_a \rightarrow \mathbf{U}_{\alpha}\}_{\alpha \in \Phi} $ of pinnings for $ (\mathbf{G},\mathbf{T}) $, where $ \mathbf{U}_\alpha \subset \mathbf{G}$ is the root subgroup associated with $ \alpha $.
	
	Let $$ D:Y \times Y \longrightarrow \Z $$be a bilinear form such that $ Q(y):=D(y,y) $ is a Weyl-invariant quadratic form. Let $$ B_Q(y,z):=Q(y+z)-Q(y)-Q(z) $$ be the Weyl-invariant bilinear form associated with $ Q $. For $ \alpha \in \Phi $, one has 
	\begin{equation}
		B_{Q}(y,\alpha^{\vee})=\langle \alpha, y \rangle Q(\alpha^{\vee}) .
	\end{equation} Let $$ \eta: Y^{\text{sc}} \longrightarrow F^{\times} $$ be a homomorphism of the coroot lattice $ Y^{\text{sc}} \subset Y $ into $ F^{\times} $.
	
	Let $ G=\mathbf{G}(F) $ be the $ F $-rational points of $ \mathbf{G} $ and we use such notations for other algebraic groups. Assume that $ F^{\times} $ contains the full group $ \mu_n $ of $ n $-th roots of unity. Then the pair $ (D,\eta) $ gives a Brylinski--Deligne central extension 
	
	$$\begin{tikzcd}
		\mu_n \ar[r, hook] & \overline{G} \ar[r, two heads] & G,
	\end{tikzcd}$$
	of $ G $ by $ \mu_n $. In this paper, we assume $ \eta= \mathbb{1} $.
	
	Consider the sublattice $ Y_{Q,n} $ given by $$ Y_{Q,n}=\{y \in Y: B_Q(y,z) \in n\Z  \text{ for all } z \in Y\} $$ and $ X_{Q,n}:=\text{Hom}(Y_{Q,n},\Z) $. For every $ \alpha \in \Phi$ we set $$ \alpha_{Q,n}^{\vee}=n_{\alpha}\alpha^{\vee} \text{ and } \alpha_{Q,n}=n_{\alpha}^{-1}\alpha$$ where $ n_{\alpha}=n/\text{gcd}(n,Q(\alpha^{\vee})) $. Denote $$ \Phi_{Q,n}^{\vee}=\{\alpha_{Q,n}^{\vee}: \alpha \in \Phi^{\vee}\} \text{ and } \Phi_{Q,n}=\{\alpha_{Q,n}: \alpha \in \Phi\}; $$ similarly for $ \Delta_{Q,n}^{\vee} $ and $ \Delta_{Q,n} $.
	
	Define the complex group $ \ol{G}^{\vee} $ associated with the root datum $$ (Y_{Q,n},\Phi_{Q,n}^{\vee},\Delta_{Q,n}^{\vee}; \ X_{Q,n}, \Phi_{Q,n}, \Delta_{Q,n}) $$
	to be the dual group of $ \ol{G} $. Let $ \mathbf{G}_{Q,n} $ be the split connected linear algebraic over $ F $ whose dual group is isomorphic to $ \ol{G}^{\vee} $. It is usually regarded as the principal endoscopy group of $ \ol{G} $. Let $\mathbf{T}_{Q,n}   $ be a fixed maximal split torus of $  \mathbf{G}_{Q,n} $.
	
	The covering group $ \ol{G} $ splits over unipotent subgroups canonically and $ G $-equivariantly. Denote by $ \ol{e}_{\alpha}(F) $ the splitting of $ e_{\alpha}(F) $ in $ \ol{G} $. With this, define $$ \ol{w}_{\alpha}(x):= \ol{e}_{\alpha}(x)\ol{e}_{-\alpha}(-x^{-1})\ol{e}_{\alpha}(x)$$ and $$\ol{h}_{\alpha}(x):=\ol{w}_{\alpha}(x)\ol{w}_{\alpha}(-1) . $$
	
	There exists a section $ \mathbf{s} $ of $ \ol{T} $ over $ T $ such that for any $ y_1,y_2 \in Y $ and $ a,b\in F^{\times} $
	\begin{equation}  \label{E1}
		\mathbf{s}(y_1(a)) \cdot \mathbf{s}(y_2(b))=\mathbf{s}(y_1(a)\cdot y_2(b))\cdot(a,b)_n^{D(y_1,y_2)},
	\end{equation}
	where $ (-,-)_n $ is the $ n $-th Hilbert symbol. 
	
	By \eqref{E1}, the commutator $ [-,-]:T \times T \rightarrow \mu_n $ is given by
	\begin{equation} \label{E4}
		[y_1(a),y_2(b)]=(a,b)_n^{B_Q(y_1,y_2)}.
	\end{equation}
	
	In addition, we list some relations used in computation. Let $ \alpha, \beta \in \Phi $, $ y \in Y $, $ u \in F $, $ x,x_1,x_2 \in F^{\times} $. Then one has:
	\begin{equation}  \label{E3}
		\ol{w}_{\alpha}(1) \cdot \mathbf{s}(y(x)) \cdot \ol{w}_{\alpha}(1)^{-1}=\mathbf{s}(y(x)) \cdot \ol{h}_{\alpha}(x^{-\langle \alpha,y \rangle})
	\end{equation}
	\begin{equation}  \label{E8}
	\ol{w}_{-\alpha}(x)=	\ol{w}_{\alpha}(-x^{-1})
    \end{equation}
	\begin{equation} \label{E5}
		\ol{w}_{\alpha}(x_1)\ol{w}_{\alpha}(x_2)=(-x_1,-x_2)^{Q(\alpha^{\vee})}_n \cdot \ol{h}_{\alpha}(-x_1x^{-1}_2)
	\end{equation}
	\begin{equation}  \label{E6}
		\ol{w}_{\alpha}(1) \cdot \ol{e}_{\beta}(u) \cdot \ol{w}_{\alpha}(1)^{-1}=\ol{e}_{\beta-\langle \beta,\alpha^{\vee} \rangle \alpha}(c(\alpha,\beta)u) \text{ with }  c(\alpha,\beta) \in \{\pm 1\}  
	\end{equation}
	\begin{equation} \label{E7}
		\ol{h}_{\alpha}(x) \cdot \ol{e}_{\beta}(u) \cdot \ol{h}_{\alpha}(x)^{-1}=\ol{e}_{\beta}(x^{\langle \beta,\alpha^{\vee} \rangle} u) 
	\end{equation}
	
	\subsection{Affine roots and the length function}
	Let $ V=Y \otimes \R$. For each $ \alpha \in \Phi $ and $ k \in \Z $, there is an affine linear function $ \alpha+k $ defined on $ V $, given by $ x \mapsto \alpha(x)+k $. Let $ \Phi_{\text{af}}=\{\alpha+k: \alpha \in \Phi, k \in \Z\} $. For each $ a \in  \Phi_{\text{af}} $, let $ H_a $ be the hyperplane on which $ a $ vanishes. Let $ s_{\alpha+k} $ be the corresponding reflection given by $$ s_{\alpha+k}(x)=x-(\alpha+k)(x)\alpha^{\vee}.$$
	
	To each $ v \in V $, we associate the translation $ \mathsf{t}_{v} $ which sends $ v^{\prime} \in V$ to $ v^{\prime}+v $. Let $ \text{Aff}(V) $ be the semidirect product of $ GL(V) $ and the group of translations by elements of $ V $. For $ r \in \text{Aff}(V) $, we define its action on affine linear functions by $$ (rf)(v):=f(r^{-1}v) ,$$where $ f:V \rightarrow \R $ is an affine linear function and $ v \in V $. 
	
	An alcove is a connected component of $ V-\bigcup_{a \in  \Phi_{\text{af}}}H_a $. Let $ \Phi^{+} \subset \Phi$ be the set of positive roots with respect to $ \Delta $. Then the fundamental alcove $ A_0 $ is given by $$ A_0:=\{x \in V :0<\alpha(x)<1 \text{ for all } \alpha \in \Phi^{+}\}. $$ 
	
	The alcove $ A_0 $ determines an ordering on $ \Phi_{\text{af}} $ via $ a>0 $ if and only if $ a(x)>0 $ for all $ x \in A_0 $. Denote by $ \Phi_{\text{af}}^{+} $ the set of positive affine roots and $ \Phi_{\text{af}}^{-} $ the set of negative affine roots.
	Let $ \Phi_{\text{max}} $ be the set of highest roots in $ \Phi $. Then $$ \Delta_{\text{af}}=\Delta \cup \{-\beta+1 :\beta \in \Phi_{\text{max}}\} $$is the unique set of simple roots associated with $ A_0 $. Let $ W_{\text{af}} $ be the affine Weyl group generated by $ s_a $ for all $ a \in \Phi_{\text{af}} $. The set of reflections $$ S_{\text{af}}:=\{s_{\alpha}: \alpha \in \Delta_{\text{af}}\}  $$realizes $ W_{\text{af}} $ as a Coxeter group. We know from \cite{Hum} that the length function $ l $ of $ (W_{\text{af}},S_{\text{af}}) $ satisfies $$ l(w)=\text{ number of hyperplanes } H_{a} \text{ that separate } w(A_0)  \text{ from } A_0.  $$
	
	Let $ W_{\text{ex}}:=Y \rtimes W $ be the extended affine Weyl group associated with $ G $. We extend $ l $ to a function $ W_{\text{ex}} \rightarrow \mathbf{Z}_{\geq 0} $ using the geometric description of length function above.
	
	Denote by $ N(w)=\{a \in \Phi_{\text{af}}^{+}:wa \in \Phi_{\text{af}}^{-}\} $ for $ w \in W_{\text{ex}} $. It is shown in \cite[\S 1.6]{Mo} that $$ \text{Card}(N(w))=l(w) $$where $ \text{Card}(N(w)) $ is the number of the elements in $ N(w) $, and for $ w_1,w_2 \in W_{\text{ex}} $ 
	\begin{equation} \label{geo-length}
		l(w_1w_2)=l(w_1)+l(w_2)  \text{ iff }  N(w_2) \subset N(w_1w_2). 
	\end{equation}
	In particular for $ a \in \Delta_{\text{af}} $, the formula for $ l(s_aw) $ is given by
	\begin{equation} \label{length}
		l(s_aw)=\left\{
		\begin{aligned}
			l(w)+1, & \text{ if } w^{-1}a \in \Phi_{\text{af}}^{+},\\
			l(w)-1, & \text{ otherwise}.
		\end{aligned}
		\right.
	\end{equation}
	
	\subsection{Genuine pro-$ p $ Iwahori Hecke algebra}
	In the following sections, we always assume $ p \nmid n $ and fix a splitting $ s $ of $ \ol{G} $ over $ \mathbf{G}(O_F) $. If there is no ambiguity, we write $ H $ instead of $ s(H) $ for a subgroup $ H \subset \mathbf{G}(O_F) $ to simplify notation and we always  write $ \ol{H} $ for $ \mu_n \times H $.
	
	Let $ I \subset \mathbf{G}(O_F)$ be the Iwahori subgroup associated with $ \mathbf{B}(\kappa_F) $ and $ I_1 $ be the unique maximal pro-$ p $ normal subgroup in $ I $. We fix the Haar measure $ \delta_{\ol{G}} $ on $ \ol{G} $ such that $ \delta_{\ol{G}}(\mu_n \times I_1)=1 $
	
	Denote by $ \mathcal{H}_{\bar{\epsilon}}(\ol{G},I_1):=C^{\infty}_{\bar{\epsilon},\text{c}}(I_1\backslash \ol{G}/I_1) $ the genuine pro-$ p $ Iwahori-Hecke algebra consisting of $ \bar{\epsilon} $-genuine locally constant compactly supported functions $ f $ on $ \ol{G} $ which are $ I_1 $-biinvariant, i.e, $ f(\zeta \gamma_1 g \gamma_2)=\epsilon(\zeta)^{-1}f(g) $ for all $ \zeta \in \mu_n $ and $ \gamma_1,\gamma_2 \in I_1 $. The algebra multiplication on $ \mathcal{H}_{\bar{\epsilon}}(\ol{G},I_1) $ is given by the convolution $ f_1 * f_2 $.
	
	For $ g \in \ol{G} $, let $ T_g \in \mathcal{H}_{\bar{\epsilon}}(\ol{G},I_1)$ be the unique element such that $ \text{supp}(T_g)=\mu_nI_1gI_1 $ and $ T_g(g)=1 $. For $ \alpha \in \Phi $ and $ \chi \in \text{Hom}(\kappa_F^{\times},\mu_n) $ we define $$ c_\alpha(\chi):=\frac{1}{q-1}\sum_{u \in \kappa_F^{\times}}\epsilon(\chi(u))T_{\ol{h}_{\alpha}(u)}\in \mathcal{H}_{\bar{\epsilon}}(\ol{G},I_1).$$
	
	Let $ N(\ol{T}) $ be the normalizer of $ \ol{T} $ in $ \ol{G} $. Then $ \ol{w}_{\alpha}(1) \rightarrow s_{\alpha} $ and $ \mathbf{s}(y(\varpi^{-1})) \rightarrow y $ give rise to an isomorphism $$ \text{pr}:N(\ol{T})/\ol{\mathbf{T}}(O_F) \rightarrow   W_{\text{ex}}.$$We say $ \dot{w} \in N(\ol{T})$ is a representative of $ w \in W_{\text{ex}} $ if $ \text{pr}(\dot{w})=w $. We can lift the length function $ l $ to $ \ol{G} $ by setting $ l(g):=l(w) $ if $g \in \ol{I}\dot{w}\ol{I}  $.

	It is shown in \cite[\S3.2]{GGK} that the following braid relations and quadratic relations hold for $ \mathcal{H}_{\bar{\epsilon}}(\ol{G},I_1) $.
	
	\begin{prop} \label{lm:br}
		{\rm(Braid relations)}. Let $ g,g^{\prime} \in \ol{G} $. If $ l(g)+l(g^{\prime})=l(gg^{\prime}) $, then 
		\begin{equation} \label{quad1}
			T_g * T_{g^{\prime}}=T_{gg^{\prime}}.
		\end{equation}
	\end{prop}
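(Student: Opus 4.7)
The plan is to follow the classical Iwahori--Matsumoto argument for the braid relations of affine Hecke algebras of reductive $p$-adic groups, with the $\mu_n$-twist carefully tracked throughout. Because the convolution of two $\bepsilon$-genuine, bi-$I_1$-invariant functions is again $\bepsilon$-genuine and bi-$I_1$-invariant, the product $T_g * T_{g'}$ lies in $\mca{H}_{\bepsilon}(\ol{G},I_1)$. To identify it with $T_{gg'}$ it then suffices to verify two things: that the support of $T_g*T_{g'}$ is contained in $\mu_n I_1 gg' I_1$, and that its value at $gg'$ equals $1$.

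\textbf{Support.} First I would establish the set-theoretic identity
\[ (I_1 g I_1)(I_1 g' I_1) = I_1 g g' I_1 \]
under the hypothesis $l(g)+l(g')=l(gg')$. Using the isomorphism $\mathrm{pr}: N(\ol{T})/\ol{\mathbf{T}}(O_F) \xrightarrow{\sim} W_{\mathrm{ex}}$, I reduce to lifted representatives $\dot{w},\dot{w}' \in N(\ol{T})$ by absorbing the $\ol{I}$-components of $g,g'$ into the flanking Iwahori subgroups. It then suffices to show $I_1 \dot{w} I_1 \dot{w}' I_1 = I_1 \dot{w}\dot{w}' I_1$. I would exploit the Iwahori factorization of $I_1$ as a product of the pro-$p$ affine root subgroups and the pro-$p$ torus; since $p \nmid n$, each factor splits canonically into $\ol{G}$ via the chosen $s$. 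A factor indexed by $a \in \Phi_{\mathrm{af}}^+$ with $(\dot{w}')^{-1} a \in \Phi_{\mathrm{af}}^+$ (that is, $a \notin N(w')$) can be pushed to the right of $\dot{w}'$ while staying in $I_1$; the remaining factors, indexed by $a \in N(w')$, get pushed to the left of $\dot{w}$, and the assumption $N(w') \subset N(ww')$ supplied by (\ref{geo-length}) guarantees that after conjugation by $\dot{w}$ they still land in $I_1$. No new $I_1$-double coset is therefore produced.

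\textbf{Coefficient at $gg'$.} Using the left-invariance of $\delta_{\ol{G}}$, the substitution $y = g\gamma$ turns
\[ (T_g*T_{g'})(gg') = \int_{\ol{G}} T_g(y)\, T_{g'}(y^{-1}gg')\, d\delta_{\ol{G}}(y) \]
into an integral over $\gamma$ in the intersection $g^{-1}(\mu_n I_1 g I_1)\,\cap\,\mu_n I_1 g' I_1 \cdot g'^{-1}$. By the same Iwahori factorization as in the support step, this intersection projects to a single $\mu_n I_1$-coset in $\ol{G}/\mu_n I_1$, and the normalization $\delta_{\ol{G}}(\mu_n I_1)=1$ yields the value $1$, with the $\bepsilon$-genuine characters cancelling exactly under the $\mu_n$-action on $y$.

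\textbf{Main obstacle.} The delicate point is the support step. One has to verify that the cocycle terms generated by pushing pro-$p$ unipotent factors past the representatives $\dot{w}$, governed by (\ref{E6}) and (\ref{E7}) and by the commutator formula (\ref{E4}), contribute no nontrivial element of $\mu_n$. Since $p \nmid n$ implies $(a,b)_n = 1$ whenever $a$ or $b$ lies in $1+\mfr{p}_F$, every Hilbert symbol arising from pro-$p$ arguments is trivial; nonetheless the bookkeeping has to be carried out term by term to rule out any residual $\mu_n$-valued discrepancy between the two sides of the identity.
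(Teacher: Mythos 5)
The paper does not prove this proposition; it simply cites \cite[\S3.2]{GGK}, so there is no in-text argument to compare against. Your reconstruction follows the standard Iwahori--Matsumoto line of reasoning, which is almost certainly what GGK do, and it is essentially sound: the reduction to the Bruhat cell level, the Iwahori factorization of the middle $I_1$ with factors pushed left or right, the cardinality/normalization argument for the value at $gg'$, and the crucial observation that $p\nmid n$ makes all Hilbert symbols with pro-$p$ arguments trivial so that no $\mu_n$-discrepancy appears.

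Two points of imprecision are worth flagging, though neither is a fatal gap. First, the reduction to ``lifted representatives'' via $\mathrm{pr}: N(\ol{T})/\ol{\mathbf{T}}(O_F)\to W_{\text{ex}}$ is slightly too coarse: $T_g$ depends on the $\mu_n I_1$-double coset of $g$, and $I_1\backslash \ol{G}/I_1$ is parametrized by a finite extension of $W_{\text{ex}}$ by $\mathbf{T}(\kappa_F)$ (the pro-$p$ Iwahori--Weyl group in Vign\'eras' sense), not by $W_{\text{ex}}$ itself. So the representative you reduce to is of the form $t\dot w$ with $t$ a coset in $\mathbf{T}(O_F)$ modulo $T\cap I_1$, not just $\dot w$; one has to carry that finite-torus factor along, although it causes no harm because it normalizes $I_1$ and conjugation by it costs no $\mu_n$-twist on pro-$p$ root groups when $p\nmid n$. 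Second, in the push-past step the relevant index set is $N((w')^{-1})=\{a>0:(w')^{-1}a<0\}$, not $N(w')$ (you want to know whether $(\dot w')^{-1}U_a\dot w'\subset I_1$, i.e.\ whether $(w')^{-1}a>0$), and the disjointness condition you actually need from the length hypothesis is $N(w)\cap N((w')^{-1})=\emptyset$, which is equivalent to $l(w)+l(w')=l(ww')$; the cited form $N(w')\subset N(ww')$ of \eqref{geo-length} is a restatement, not what is literally invoked. With those two corrections the argument goes through.
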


	\begin{prop} \label{lm:qu}
		{\rm(Quadratic relations).} 
		\begin{itemize}
			\item[(i)] For every $ \alpha \in \Delta $ 
			\begin{equation} \label{quad1}
				T_{\ol{w}_{\alpha}(1)}^2=qT_{\ol{h}_{\alpha}(-1)}+(q-1)c_{\alpha}(\mathbf{1})T_{\ol{w}_{\alpha}(1)}.
			\end{equation}
			\item[(ii)] For every $ \beta \in \Phi_{\text{max}} $
			\begin{equation} \label{quad2}
				T_{\ol{w}_{\beta}(\varpi^{-1})}^2=q\epsilon((\varpi,\varpi)_{n}^{Q(\beta^\vee)}) T_{\ol{h}_{\beta}(-1)}+(q-1)c_{\beta}((-,\varpi)_n^{Q(\beta^\vee)})T_{\ol{w}_{\beta}(\varpi^{-1})}.
			\end{equation}
		\end{itemize}
	\end{prop}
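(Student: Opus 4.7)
The plan is to adapt the classical derivation of the Iwahori--Hecke quadratic relations to the covering setting, carrying along the Hilbert symbols produced by the cocycle \eqref{E1}. Writing $w = \ol{w}_\alpha(1)$ in case (i) and $w = \ol{w}_\beta(\varpi^{-1})$ in case (ii), both parts follow the same blueprint: simplify $w^2$ via \eqref{E5}; use the Iwahori factorisation of $I_1$ together with the Bruhat decomposition to enumerate the $I_1$-double cosets in $\mu_n I_1 w I_1 w I_1$; then evaluate $T_w * T_w$ on one representative from each double coset and compare with the right-hand side.

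For part (i), \eqref{E5} specialises to $w^2 = (-1,-1)_n^{Q(\alpha^\vee)} \ol{h}_\alpha(-1) = \ol{h}_\alpha(-1)$, since $-1 \in O_F^\times$ and $p \nmid n$ force the Hilbert symbol to be trivial. By \eqref{E6} a set of right-coset representatives for $I_1 \backslash I_1 w I_1$ can be taken to be $\{\ol{e}_{-\alpha}(\widetilde u)\,w\}$, with $\widetilde u$ running over lifts in $O_F$ of elements of $\kappa_F$; this set has cardinality $q$, so $\mu_n I_1 w I_1 w I_1$ splits into the $I_1$-double cosets of $\ol{h}_\alpha(-1)$ and of $\ol{h}_\alpha(u)\,w$ for $u \in \kappa_F^\times$. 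A direct evaluation of $(T_w * T_w)(\ol{h}_\alpha(-1))$ produces $q$ non-trivial terms of value $1$ (the genuine phases cancelling pairwise), yielding the coefficient $q$ in front of $T_{\ol{h}_\alpha(-1)}$. Evaluation at $\ol{h}_\alpha(u)w$ gives a value independent of $u \in \kappa_F^\times$; using the braid relation $T_{\ol{h}_\alpha(u)} * T_w = T_{\ol{h}_\alpha(u)w}$ (Proposition \ref{lm:br}, applicable because $l(\ol{h}_\alpha(u)) = 0$), these $q-1$ contributions reassemble as $(q-1)\, c_\alpha(\mathbf{1}) * T_w$.

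For part (ii) the same skeleton applies, but \eqref{E5} now yields
\[ w^2 \;=\; (-\varpi^{-1}, -\varpi^{-1})_n^{Q(\beta^\vee)} \, \ol{h}_\beta(-1) \;=\; (\varpi, \varpi)_n^{Q(\beta^\vee)} \, \ol{h}_\beta(-1), \]
after exploiting bilinearity of $(-,-)_n$ and the triviality of $(-1,-1)_n$ (valid since $p \nmid n$); this is the source of the factor $\epsilon((\varpi,\varpi)_n^{Q(\beta^\vee)})$ in front of $T_{\ol{h}_\beta(-1)}$. For the cross term, commuting $\ol{h}_\beta(u)$ past $w$ via \eqref{E3} together with the cocycle \eqref{E1} introduces a Hilbert symbol $(u,\varpi)_n^{Q(\beta^\vee)}$; summing the $q-1$ resulting contributions over $u \in \kappa_F^\times$ matches precisely $(q-1)\, c_\beta((-,\varpi)_n^{Q(\beta^\vee)}) * T_w$.

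The main obstacle is the Hilbert-symbol bookkeeping in part (ii). In part (i) every Hilbert symbol encountered involves only units in $O_F^\times$, hence is trivial under $p \nmid n$, and the computation essentially reduces to the linear Iwahori case. In part (ii) the uniformiser $\varpi^{-1}$ pervades the double-coset decomposition, and one must verify that the various Hilbert-symbol factors collected while applying \eqref{E1}, \eqref{E3}, \eqref{E5}, and \eqref{E7} assemble into exactly $(\varpi,\varpi)_n^{Q(\beta^\vee)}$ as a constant prefactor and $(-,\varpi)_n^{Q(\beta^\vee)}$ as a character of $\kappa_F^\times$, with no spurious residual terms.
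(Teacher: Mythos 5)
The paper does not prove Proposition~\ref{lm:qu}; it is imported verbatim from \cite[\S3.2]{GGK}, where the derivation follows exactly the strategy you outline: compute $w^2$ from \eqref{E5}, enumerate the $I_1$-double cosets inside $\mu_n I_1 w I_1 w I_1$ via the Iwahori factorization and the rank-one Bruhat decomposition, and identify the contribution of each coset with a term on the right-hand side. Your preparatory Hilbert-symbol identities are correct: $(-1,-1)_n=1$ because both arguments are units and $p\nmid n$, and $(-\varpi^{-1},-\varpi^{-1})_n=(\varpi,\varpi)_n$ by bilinearity together with $(a,b)_n(b,a)_n=1$ and $(-1,-1)_n=1$. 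So the blueprint matches the source.

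Two points bear correction. First, equation \eqref{E6} is a conjugation formula for root groups; it does not yield a coset decomposition of $I_1 w I_1$. The $q$ coset representatives $\ol{e}_{\pm\alpha}(\tilde c)\,w$ come from the Iwahori factorization of $I_1$ together with the BN-pair axiom, and whether the representatives sit on the left or the right of $w$ and involve $\alpha$ or $-\alpha$ depends on bookkeeping conventions that actually affect the signs in the cocycle. Second, and more substantively, you correctly flag the cocycle bookkeeping in (ii) as the crux but then do not carry it out. When one decomposes $w\,\ol{e}_{\pm\beta}(\tilde c)\,w$ using the $\SL_2$ relations, the combination of \eqref{E1}, \eqref{E3}, and \eqref{E5} produces a residue-dependent factor which must be shown to equal exactly $(c,\varpi)_n^{Q(\beta^\vee)}$, with no residual constant and no hidden dependence on the choice of lift $\tilde c$. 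That verification is the content of the proof and is what distinguishes (ii) from (i); as written, your argument indicates where the character in $c_\beta((-,\varpi)_n^{Q(\beta^\vee)})$ ought to arise but does not establish it. A complete proof needs the explicit convolution computation of \cite[\S3.2]{GGK} (or an equivalent one) at this step.
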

	\begin{rmk}
		Here $ \epsilon((\varpi,\varpi)_{n}^{Q(\beta^\vee)}) T_{\ol{h}_{\beta}(-1)}= T_{\ol{w}_{\beta}^2(\varpi^{-1})}$ by \eqref{E5}.
		
	\end{rmk}

	\section{Review of Bushnell--Kutzko type theory}
	In this section, we discuss briefly the theory of Bushnell--Kutzko types. Althrough in \cite{BK} the type theory is studied in the setting of linear algebraic groups, it works almost word for word for the covering groups.
	
	\subsection{Bernstein decomposition for covering groups}
	Let $ \ol{G}^{\circ} \subset \ol{G}$ be the subgroup generated by all compact subgroups of $ \ol{G} $. The set of unramified characters of $ \ol{G} $ is defined to be
	$$ \mathcal{X}(\ol{G}):=\text{Hom}(\ol{G}/ \ol{G}^{\circ} ,\C^{\times}). $$We consider pairs $ (\ol{L},\pi) $ consisting of a Levi subgroup $ \ol{L}\subset \ol{G} $ and an irreducible supercuspidal representation $ \pi $ of $ \ol{L} $. Two such pairs $ (\ol{L}_j,\pi_j) $, $ j=1,2 $ are called inertially equivalent if there exist $ g \in \ol{G} $ and $ w \in \mathcal{X}(\ol{L}_2) $ such that 
	$$ \ol{L}_2=\ol{L}_1^g \text{ and } \pi_1^g \simeq \pi_2 \otimes w,$$where $ \ol{L}_1^g:=g^{-1}\ol{L}_1g $ and $ \pi_1^g(x):=\pi_1(gxg^{-1})  $ for every  $ x \in   \ol{L}_1^g$. Denote by $ [\ol{L},\pi]_{\ol{G}} $ the inertial equivalence class of the pair $ (\ol{L},\pi) $. Let
	$ \mathfrak{B}(\ol{G})$
	be the set of all inertial equivalence classes in $ \ol{G} $.
	
	Denote by $ \mathfrak{R}(\ol{G}) $ the category of smooth representation of $ \ol{G} $. Let $ \Pi \in \mathfrak{R}(\ol{G})$ be an irreducible representation. We define its support to be the pair $ (\ol{L},\pi) $ up to conjugation such that $ \Pi $ isomorphic to a subquotient in the normalized parabolic induction $ \text{Ind}_{\ol{P}}^{\ol{G}}(\pi) $, where $ \ol{P} $ is a parabolic subgroup of $ \ol{G} $ with Levi component $ \ol{L} $. The inertial equivalence class
	$$ \mathfrak{I}(\pi):=[\ol{L},\pi]_{\ol{G}} \in \mathfrak{B}(\ol{G}) $$is then called the inertial support of $ \Pi $. For any $ \mathfrak{s} \in \mathfrak{B}(\ol{G}) $, we have the subcategory
	$$ \mathfrak{R}_{\mathfrak{s}}(\ol{G}):=\{\Pi^{\prime}\in \mathfrak{R}(\ol{G}):\text{ every irreducible subquotient }\Pi \text{ of } \Pi^{\prime} \text{ satisfies } \mfr{I}(\Pi)=\mfr{s}\} $$of $ \mfr{R}(\ol{G}) $. As in the linear case, we have the Bernstein decomposition for covering groups 
	\begin{equation} \label{Bernstein}
		\mathfrak{R}(\ol{G})=\prod_{\mathfrak{s} \in \mathfrak{B}(\ol{G})} \mathfrak{R}_{\mathfrak{s}}(\ol{G}) ,
	\end{equation}
	see \cite[Theorem 7.14]{FP}.
	\subsection{Bushnell--Kutzko types for covering groups}
	Let $ H \subset \ol{G} $ be a open compact subgroup, with measure inherited from a Haar measure $ \delta_{\ol{G}} $ of $ \ol{G} $. Let $ (\rho,W) $ be a smooth irreducible representation of $ H $ with character denoted by $ \chi_{\rho} $. For $ (\pi,V) \in \mfr{R}(\ol{G})$, we denote by $ V^{\rho} $ the $ \rho $-isotypic subspace of $ V $.
	
	Let $ \mathcal{H}(\ol{G}):=C^{\infty}_{c}(\ol{G}) $ be the convolution algebra of locally constant compactly supported functions on $ \ol{G} $. We have $ V^{\rho}=e_{\rho}V $, where $ e_{\rho} \in  \mathcal{H}(\ol{G}) $ is the idempotent given by 
	$$ e_{\rho}(x)=\left\{\begin{array}{ll}
		\delta_{\ol{G}}(H)^{-1}\cdot\text{dim}(\rho)\cdot \chi_{\rho}(x^{-1}) & \text{ if }x \in H, \\
		0 & \text{ otherwise}.
	\end{array}\right. $$Consider the full subcategory
	$$ \mfr{R}_{\rho}(\ol{G})=\{(\pi,V)\in \mfr{R}(\ol{G}): V \text{ is generated by }V^{\rho}\} $$of $ \mfr{R}(\ol{G}) $. One has a functor 
	$$ \mfr{R}_{\rho}(\ol{G}) \longrightarrow \mca{M}(e_{\rho}\mca{H}(\ol{G})e_{\rho}) \text{ given by } V \mapsto V^{\rho} ,$$where $ \mca{M}(A) $ denotes the category of modules over an algebra $ A $. However, this functor does not give an equivalence of categories in general.
	
	\begin{dfn} \label{Def:cover}
		Let $ \mfr{S}\in \mfr{B}(\ol{G}) $ be a finite subset. A pair $ (H,\rho) $ is called an $ \mfr{S} $-type of $ \ol{G} $ if it satisfies the following property: an irreducible representation $ \Pi $ of $ \ol{G} $ contains $ \rho $ if and only if $ \mfr{I}(\Pi) \in \mfr{S}$.
	\end{dfn}
	
	If $ \mfr{S}=\{\mfr{s}\} $ is a singleton set, then we simply use the term $ \mfr{s} $-type. The following result is from \cite[Theorem 7.5]{Ro}. The essential ingredient used in the proof of loc. cit. is the Bernstein decomposition, which also holds for covering groups as in \eqref{Bernstein}.
	
	\begin{prop} \label{equivalence}
		Let $ \mfr{s}\in \mfr{B}(\ol{G}) $ and $ (H,\rho) $ be an $ \mfr{s} $-type. Then the following hold:
		\begin{itemize}
			\item[(i)] As subcategories of $ \mfr{R}(\ol{G}) $, one has
			$$ \mfr{R}_{\rho}(\ol{G})=\mfr{R}_{\mfr{s}}(\ol{G}). $$ 
			In particular, $ \mfr{R}_{\rho}(\ol{G}) $ is closed under taking subquotients.
			\item[(ii)] The functor 
			$$ \mfr{R}_{\rho}(\ol{G}) \longrightarrow \mca{M}(e_{\rho}\mca{H}(\ol{G})e_{\rho}),\quad V \mapsto V^{\rho} $$
			gives an equivalence of categories.
		\end{itemize}
	\end{prop}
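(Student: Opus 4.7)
The plan is to bootstrap both statements from the single observation that the $\rho$-isotypic functor $V \mapsto V^\rho$ on $\mathfrak{R}(\ol{G})$ is exact. This holds because $H$ is compact open, so smooth $H$-representations are semisimple (via Haar averaging on $H$), and the $\rho$-isotypic component commutes with formation of subobjects, quotients, and direct sums.

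For part (i), I first establish $\mathfrak{R}_\rho(\ol{G}) \subset \mathfrak{R}_{\mathfrak{s}}(\ol{G})$. Given $V$ generated by $V^\rho$, I decompose $V = \bigoplus_{\mathfrak{t} \in \mathfrak{B}(\ol{G})} V_{\mathfrak{t}}$ via the Bernstein decomposition \eqref{Bernstein}; exactness yields $V^\rho = \bigoplus_{\mathfrak{t}} V_{\mathfrak{t}}^\rho$. If $V_{\mathfrak{t}}^\rho$ were nonzero for some $\mathfrak{t} \neq \mathfrak{s}$, I would pick a nonzero vector and pass, through a maximal proper submodule avoiding it, to an irreducible quotient $\Pi$ with $\Pi^\rho \neq 0$ and $\mathfrak{I}(\Pi) = \mathfrak{t}$, contradicting Definition \ref{Def:cover}. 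Hence $V^\rho \subset V_{\mathfrak{s}}$, and since $V$ is generated by $V^\rho$ it follows that $V = V_{\mathfrak{s}} \in \mathfrak{R}_{\mathfrak{s}}(\ol{G})$. Conversely, for $V \in \mathfrak{R}_{\mathfrak{s}}(\ol{G})$ let $W$ be the subrepresentation generated by $V^\rho$; exactness gives $(V/W)^\rho = 0$. If $V/W$ were nonzero, it would still lie in $\mathfrak{R}_{\mathfrak{s}}$ and hence admit an irreducible subquotient $\Pi$ with $\mathfrak{I}(\Pi) = \mathfrak{s}$, forcing $\Pi^\rho \neq 0$ by the type property; but $\Pi^\rho$ would simultaneously be a subquotient of $(V/W)^\rho = 0$, a contradiction.

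For part (ii), I appeal to standard Morita/Watts reasoning applied to the compactly induced representation $P := \mathrm{ind}_H^{\ol{G}}(\rho)$. Frobenius reciprocity yields a natural isomorphism $\mathrm{Hom}_{\ol{G}}(P, V) \cong V^\rho$, so the exactness recorded above identifies $P$ as a projective object of $\mathfrak{R}(\ol{G})$, with endomorphism algebra canonically isomorphic to $e_\rho \mathcal{H}(\ol{G}) e_\rho$. By part (i), $\mathfrak{R}_\rho(\ol{G}) = \mathfrak{R}_{\mathfrak{s}}(\ol{G})$ is closed under subquotients, and $P$ is a generator of $\mathfrak{R}_\rho(\ol{G})$ directly from the definition of the latter. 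The Watts--Morita theorem then furnishes the quasi-inverse $M \mapsto M \otimes_{e_\rho \mathcal{H}(\ol{G}) e_\rho} P$ and yields the claimed equivalence of categories.

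The main technical point is the careful deployment of exactness of $V \mapsto V^\rho$, particularly in the backward inclusion of (i), where I need that an irreducible subquotient $\Pi$ of a representation $V^\prime$ with $(V^\prime)^\rho = 0$ automatically satisfies $\Pi^\rho = 0$. Apart from this, every step is classical; the only input specific to the covering setting is the validity of the Bernstein decomposition \eqref{Bernstein} for $\ol{G}$, which is already in hand.
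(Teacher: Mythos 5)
Your proof is correct and follows essentially the same route as the paper's cited reference. The paper offers no independent argument here: it cites \cite[Theorem 7.5]{Ro} (which rests on \cite{BK}) and records only that the needed Bernstein decomposition holds for covering groups; your argument is a faithful reconstruction of that proof, with part (i) being the standard Bernstein-decomposition-plus-type-property argument and part (ii) the Morita-theoretic statement behind \cite[Theorem 4.3]{BK}. One small imprecision in part (ii): Frobenius reciprocity for compact induction gives $\Hom_{\ol{G}}(\text{ind}_H^{\ol{G}}\rho,\,V)\cong \Hom_H(\rho,\,V|_H)$, which coincides with $V^\rho$ only when $\dim\rho=1$; in general $V^\rho\cong \rho\otimes\Hom_H(\rho,V)$, and $\End_{\ol{G}}(\text{ind}_H^{\ol{G}}\rho)$ is (an opposite of) $\mca{H}(\ol{G},\rho)$, which is Morita equivalent to, but not literally, $e_\rho\mca{H}(\ol{G})e_\rho$. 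This does not affect the conclusion, since the two target module categories are equivalent, but the chain of identifications deserves a sentence of justification.
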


	\subsection{Construction of types}
	We briefly review \cite[\S 6-8]{BK}, which gives a procedure for constructing types in $ \ol{G} $ from types in a proper Levi subgroup $ \ol{L} $.
	
	Let $ \ol{P}_{u}=\ol{L}N_{u} \subset \ol{G}$ be a parabolic subgroup with Levi component $ \ol{L} $, where $ N_{u} $ is the unipotent part arising from the canonical splitting. Let $ N_l $ be the opposite of $ N_u $ relative to $ \ol{L} $. Let $ J \subset \ol{G} $ be an open compact subgroup satisfying $$ J=(J \cap N_l)\cdot(J\cap \ol{L})\cdot (J \cap N_u).  $$
	
	An element $ \zeta \in Z(\ol{L}) $ is called strongly positive relative to $ J $ and $ \ol{P}_u $ if it satisfies the following:
	\begin{itemize}
		\item[(i)] $\zeta J_u \zeta^{-1}\subset J_u \text{ and } \zeta^{-1}J_l \zeta \subset J_l$;
		\item[(ii)] for any open compact subgroups $ H_1,H_2 \subset N_u $, there exists an $ m \in \mathbf{Z}_{\geq 0} $ such that $ \zeta^{m}H_1\zeta^{-m}\subset H_2 $;
		\item[(iii)] for any open compact subgroups $ K_1,K_2 \subset N_l $, there exists an $ m \in \mathbf{Z}_{\geq 0} $ such that $ \zeta^{-m}K_1\zeta^{m}\subset K_2 $;
	\end{itemize} 
	
	It is shown in \cite[Lemma 2.7]{GSS} that $ Z(\ol{L})=\ol{Z(L)} \cap Z(\ol{T}) $, where $ \ol{Z(L)} $ is the preimage of $ Z(L) $. It is easy to see that $ \ol{Z(L)}^n \subset Z(\ol{L}) $ and thus the existence of strongly positive elements is ensured by \cite[Proposition 6.14]{BK}.
	
	To proceed, let $ J_{\ol{L}} \subset \ol{L}$ be an open compact subgroup and $ (\rho_{\ol{L}},W) $ be an irreducible representation of $ J_{\ol{L}} $. Let $ (\rho,V) $ be an irreducible representation of $ J $ and $( \rho^{\vee},V^{\vee}) $ be the contragredient representation of $ \rho $. We denote by $ \mathcal{H}(\ol{G},\rho) $ the convolution algebra of compactly functions $$ f:\ol{G}\longrightarrow \text{End}_{\C}(V^{\vee}) $$satisfying $ f(agb)=\rho^{\vee}(a)f(g)\rho^{\vee}(b) $ for $ a,b \in J $ and $ g \in \ol{G} $.
	
	\begin{dfn} \label{G-cover}
		\cite[\S8]{BK} The pair $ (J,\rho) $ is called a $ \ol{G} $-cover of $ (J_{\ol{L}},\rho_{\ol{L}}) $ if the following hold:
		\begin{itemize}
			\item[(i)] For every parabolic subgroup $ \ol{P}=\ol{L}N_u \subset \ol{G} $ with Levi component $ \ol{L} $, one has $$ J=(J \cap N_l)\cdot(J\cap \ol{L})\cdot (J \cap N_u), $$
			and the groups $ J \cap N_l, J \cap N_u $ are both contained in the kernel of $ \rho $.
			\item[(ii)] $ J \cap \ol{L}=J_{\ol{L}} $ and $ \rho|_{J_{\ol{L}}} \simeq \rho_{\ol{L}}. $
			\item[(iii)] For every parabolic subgroup $ \ol{P} \subset \ol{G} $ with Levi component $ \ol{L} $, there exists a strongly positive element $ z_{\ol{P}}\in Z(\ol{L}) $ and an invertible element of $ \mathcal{H}(\ol{G},\rho) $ supported on the double coset $ Jz_{\ol{P}}J. $
		\end{itemize}
	\end{dfn}
	
	Let $ \mfr{S}_{\ol{L}}\subset \mfr{B}(\ol{L}) $ be a finite subset. Since each $ [\ol{M},\pi]_{\ol{L}} \in\mfr{B}(\ol{L}) $ determines an element $ [\ol{M},\pi]_{\ol{G}}\in \mfr{B}(\ol{G}) $, we denote by $ \mfr{S}_{\ol{G}} \subset \mfr{B}(\ol{G})$ the subset associated with $ \mfr{S}_{\ol{L}} $ in this way.
	
	\begin{thm} \label{thm:G-cover}
		Suppose that $ (J_{\ol{L}},\rho_{\ol{L}}) $ is an $ \mfr{S}_{\ol{L}} $-type of $ \ol{L} $ and $ (J,\rho) $ is a $ \ol{G} $-cover of $ (J_{\ol{L}},\rho_{\ol{L}}) $. Then $ (J,\rho) $ is an $ \mfr{S}_{\ol{G}} $-type of $ \ol{G} $.
	\end{thm}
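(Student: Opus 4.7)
The approach is to follow the strategy of Bushnell--Kutzko's proof of \cite[Theorem 8.3]{BK} essentially verbatim, since the key ingredients --- the Bernstein decomposition \eqref{Bernstein} (available for $\ol{G}$ via \cite{FP}), second adjointness for parabolic induction and Jacquet restriction, and admissibility/finite length of Jacquet modules --- all carry over to the covering setting without any new input.

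The central technical step is the analog of the main lemma of \cite[\S 7]{BK}: for every smooth representation $(\pi,V)$ of $\ol{G}$ and every parabolic $\ol{P}=\ol{L}N_u$, the composition
$$V^\rho \into V \onto V_{N_u}$$
factors through $(V_{N_u})^{\rho_{\ol{L}}}$ and induces a natural linear isomorphism $j_{\ol{P}}: V^\rho \xrightarrow{\sim} (V_{N_u})^{\rho_{\ol{L}}}$. Well-definedness of the map (and the factorization) uses only the decomposition $J=(J\cap N_l)(J\cap\ol{L})(J\cap N_u)$ from axiom (i) of Definition \ref{G-cover}, together with the triviality of $\rho$ on $J\cap N_l$ and $J\cap N_u$. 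Injectivity and surjectivity are the delicate parts: they are extracted from the strongly positive $z_{\ol{P}}\in Z(\ol{L})$ in axiom (iii). Namely, the invertibility in $\mca{H}(\ol{G},\rho)$ of the Hecke element supported on $Jz_{\ol{P}}J$, combined with the fact that sufficiently large positive powers $z_{\ol{P}}^{m}$ (resp. $z_{\ol{P}}^{-m}$) shrink any open compact of $N_u$ (resp. $N_l$), runs a standard filtration/absorption argument that yields a two-sided inverse to $j_{\ol{P}}$.

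With $j_{\ol{P}}$ in hand both directions of the type property are clean. For the forward direction, if $\Pi$ is irreducible with $\Pi^\rho\neq 0$ then $(\Pi_{N_u})^{\rho_{\ol{L}}}\neq 0$; since $\Pi_{N_u}$ has finite length, some irreducible subquotient $\sigma$ of $\Pi_{N_u}$ contains $\rho_{\ol{L}}$, hence satisfies $\mfr{I}(\sigma)\in \mfr{S}_{\ol{L}}$ by hypothesis on $(J_{\ol{L}},\rho_{\ol{L}})$; Frobenius reciprocity then exhibits $\Pi$ as a subquotient of $\Ind_{\ol{P}}^{\ol{G}}(\sigma)$, forcing $\mfr{I}(\Pi)\in \mfr{S}_{\ol{G}}$. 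For the converse, suppose $\mfr{I}(\Pi)\in \mfr{S}_{\ol{G}}$. Then $\Pi$ is a subquotient of $\Ind_{\ol{P}}^{\ol{G}}(\sigma)$ for some irreducible $\sigma$ of $\ol{L}$ whose inertial class lies in $\mfr{S}_{\ol{L}}$, so $\sigma^{\rho_{\ol{L}}}\neq 0$ by the type property on $\ol{L}$; using exactness of the Jacquet functor on a short exact sequence exhibiting $\Pi$ as a subquotient and applying $j_{\ol{P}}^{-1}$ to the relevant pieces produces a nonzero vector in $\Pi^\rho$.

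The principal obstacle is establishing $j_{\ol{P}}$ as an isomorphism, and specifically the surjectivity: this is precisely what forces the invertibility requirement in axiom (iii) of Definition \ref{G-cover}. Modulo this lemma, everything else is general Bernstein-theoretic bookkeeping insensitive to the presence of the cover, since the identification $Z(\ol{L})\supset \ol{Z(L)}^n$ cited from \cite{GSS} guarantees sufficiently many strongly positive elements in the covering context.
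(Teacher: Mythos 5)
Your proposal is correct and follows the same route as the paper: both establish the isomorphism $V^{\rho}\simeq V_N^{\rho_{\ol{L}}}$ by the Bushnell--Kutzko argument (their Theorem 7.9, using strongly positive elements from axiom (iii) of Definition \ref{G-cover}), then run the two directions via Frobenius reciprocity, finite length of Jacquet modules, and the fact that $(J_{\ol{L}},\rho_{\ol{L}})$ is an $\mfr{S}_{\ol{L}}$-type. The only difference is cosmetic: the paper spells out the converse by first embedding $\Pi$ into $\Ind_{\ol{Q}}^{\ol{G}}(\pi\otimes\omega)$ and then passing through $\tau=\Ind_{\ol{Q}\cap\ol{L}}^{\ol{L}}(\pi\otimes\omega)$ via transitivity, while you compress this to ``$\Pi$ is a subquotient of $\Ind_{\ol{P}}^{\ol{G}}(\sigma)$ for some irreducible $\sigma$ with $\mfr{I}(\sigma)\in\mfr{S}_{\ol{L}}$,'' which unwinds to the same thing.
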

	
	\begin{proof}
		We sketch the proof following closely \cite{BK}. For an irreducible representation $ (\Pi,V) $ of $ \ol{G} $, we consider its Jacquet module $ (\Pi_{N},V_N) $ with respect to the parabolic subgroups $ \ol{P}=\ol{L}N $. One has 
		\begin{equation} \label{Jacquet}
			V^{\rho} \simeq V_N^{\rho_{\ol{L}}}
		\end{equation}
		which follows from the same computation as in \cite[Theorem 7.9]{BK}. We need to show that $ V^{\rho} \neq 0$ if and only if $ \mfr{I}(\Pi) \in \mfr{S}_{\ol{G}} $.
		
		Suppose that $ V^{\rho} \neq 0 $. By the above isomorphism, we have $ V_N^{\rho_{\ol{L}}} \neq 0$. Since $ (J_{\ol{L}},\rho_{\ol{L}}) $ is an $ \mfr{S}_{\ol{L}} $-type, we see that $ \Pi_N $ has an irreducible subquotient $ \pi $ with $ \mfr{I}(\pi) \in \mfr{S}_{\ol{L}} $. By Frobenius reciprocity, $ \Pi $ embeds in the parabolic induction of $ \pi $ and thus $ \mfr{I}(\Pi) \in \mfr{S}_{\ol{G}}$.
		
		On the other hand, assume now that $ \mfr{I}(\Pi)=[\ol{M},\pi]_{\ol{G}} \in \mfr{S}_{\ol{G}}$ with $ [\ol{M},\pi]_{\ol{L}} \in \mfr{S}_{\ol{L}} $. By the definition of types,
		$$ \Pi \hookrightarrow \text{Ind}_{\ol{Q}}^{\ol{G}}(\pi \otimes \omega),$$where $ \ol{Q} \subset \ol{G} $ is a parabolic subgroup with Levi component $ \ol{M} $ and $ \omega \in \mca{X}(\ol{M}) $. Set 
		$$ \tau= \text{Ind}_{\ol{Q}\cap \ol{L}}^{\ol{L}}(\pi \otimes \omega)$$and let $ \ol{P}=\ol{L}N $ be a parabolic subgroup containing $ \ol{Q} $. By transitivity of parabolic induction, $ \Pi $ embeds in $ \text{Ind}_{\ol{P}}^{\ol{G}}(\tau) $. Then by the Frobenius reciprocity, $ \Pi_N $ and $ \tau $ have a composition factor in common. Because $ (J_{\ol{L}},\rho_{\ol{L}}) $ is an $ \mfr{S}_{\ol{L}} $-type, we have $ \Pi_N $ contains $ \rho_{\ol{L}} $ and it follows from \eqref{Jacquet} that $ \Pi  $ contains $ \rho $.
	\end{proof}
	
	\subsection{Types for covering torus}
	Consider the covering torus
	$$\begin{tikzcd}
		\mu_n \ar[r, hook] & \overline{T} \ar[r, two heads] & T,
	\end{tikzcd}$$which is a Heisenberg type group. There is a natural bijection \begin{equation} \label{rep cor}
		\{\text{genuine characters of }Z(\ol{T})\} \longleftrightarrow \{\text{irreducible genuine representations of }\ol{T}\},
	\end{equation}
	where $ Z(\ol{T}) $ is the center of $ \ol{T} $. 
	
	More precisely, let $ \chi:Z(\ol{T})\rightarrow \C^{\times} $ be a genuine central character. We can choose a maximal abelian group $ A \subset \ol{T} $ and an extension $ \widetilde{\chi}:A \rightarrow \C^{\times}  $ of $ \chi $. Then the induced representation $  \pi_{\chi}=\text{ind}_{A}^{\ol{T}}\widetilde{\chi} $ is irreducible and independent of the choice of $ A $ and $ \widetilde{\chi} $. It is the unique irreducible genuine representation of $ \ol{T} $ which has central character $ \chi $.
	
	Since we assume that $ p \nmid n$, we can take $ A=Z(\ol{T})\ol{\mathbf{T}}(O_F) $.
	
	\begin{lm} \label{lm:covering torus}
		The pair $ (\ol{\mathbf{T}}(O_F),\widetilde{\chi}|_{\ol{\mathbf{T}}(O_F)}) $ is a $ [\ol{T},\pi_{\chi}]_{\ol{T}} $-type of $ \ol{T} $.
	\end{lm}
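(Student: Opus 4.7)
The plan is to verify the defining property of a type (Definition \ref{Def:cover}) directly for $\mfr{s} = [\ol{T},\pi_\chi]_{\ol{T}}$: I must show that an irreducible genuine representation $\pi$ of $\ol{T}$ contains $\wt{\chi}|_{\ol{\mbf{T}}(O_F)}$ upon restriction if and only if $\mfr{I}(\pi) = \mfr{s}$. Since $\ol{T}$ is itself a Levi subgroup of $\ol{T}$, the class $\mfr{s}$ consists of representations of the form $\pi_\chi \otimes \omega$ with $\omega \in \mca{X}(\ol{T})$. The maximal compact subgroup of $T$ is $\mbf{T}(O_F)$, and $\mu_n \subset \ol{\mbf{T}}(O_F)$, so $\ol{T}^{\circ} = \ol{\mbf{T}}(O_F)$; thus every unramified character of $\ol{T}$ is trivial on $\ol{\mbf{T}}(O_F)$.

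For the ``if'' direction, if $\pi \simeq \pi_\chi \otimes \omega$ then $\pi|_{\ol{\mbf{T}}(O_F)} \simeq \pi_\chi|_{\ol{\mbf{T}}(O_F)}$. Since $\ol{\mbf{T}}(O_F) \subset A = Z(\ol{T})\ol{\mbf{T}}(O_F)$, the function $f$ in $\pi_\chi = \text{ind}_A^{\ol{T}}(\wt{\chi})$ supported on $A$ with $f(a) = \wt{\chi}(a)$ for $a \in A$ satisfies $\pi_\chi(t) f = \wt{\chi}(t) f$ for all $t \in \ol{\mbf{T}}(O_F)$, establishing the containment.

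For the converse, write $\pi = \pi_{\chi'}$ via the bijection \eqref{rep cor}. The key computation is a Mackey decomposition: $A$ is normal in $\ol{T}$ because commutators in $\ol{T}$ take values in $\mu_n \subset A$, and $\ol{\mbf{T}}(O_F) \subset A$, so
$$
\pi_{\chi'}|_{\ol{\mbf{T}}(O_F)} = \bigoplus_{g \in \ol{T}/A} \wt{\chi'}^g|_{\ol{\mbf{T}}(O_F)}.
$$
The hypothesis supplies some $g \in \ol{T}$ with $\wt{\chi'}^g|_{\ol{\mbf{T}}(O_F)} = \wt{\chi}|_{\ol{\mbf{T}}(O_F)}$; restricting this equality to the central subgroup $Z(\ol{T}) \cap \ol{\mbf{T}}(O_F)$, on which $g$-conjugation acts trivially, gives $\chi'|_{Z(\ol{T}) \cap \ol{\mbf{T}}(O_F)} = \chi|_{Z(\ol{T}) \cap \ol{\mbf{T}}(O_F)}$. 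The main step is then to promote this to a global twist: $\chi'\chi^{-1}$ is an ordinary character of $Z(\ol{T})$ factoring through $Z(\ol{T})\ol{\mbf{T}}(O_F)/\ol{\mbf{T}}(O_F) \hookrightarrow \ol{T}/\ol{\mbf{T}}(O_F)$, and because $\C^{\times}$ is divisible (hence injective in the category of abelian groups) it extends to a character $\omega$ of $\ol{T}/\ol{\mbf{T}}(O_F)$, i.e., $\omega \in \mca{X}(\ol{T})$. Then $\chi' = \chi \cdot \omega|_{Z(\ol{T})}$ forces $\pi_{\chi'} \simeq \pi_\chi \otimes \omega$ by the bijection, putting $\pi$ in $\mfr{s}$. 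The only real obstacle is the extension of characters, which is handled cleanly by the injectivity of $\C^{\times}$ as an abelian group.
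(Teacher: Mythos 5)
Your proof is correct and follows essentially the same route as the paper's: in both cases the crux is extracting an unramified twist trivial on $\ol{\mathbf{T}}(O_F)$, extending it along the inclusion $A/\ol{\mathbf{T}}(O_F) \hookrightarrow \ol{T}/\ol{\mathbf{T}}(O_F)$ (note $A = Z(\ol{T})\ol{\mathbf{T}}(O_F)$, so your extension from $Z(\ol{T})\ol{\mathbf{T}}(O_F)/\ol{\mathbf{T}}(O_F)$ is the same extension step) using the injectivity of $\C^{\times}$, and then concluding via the bijection \eqref{rep cor}. The only cosmetic difference is that you read off the twist by applying Mackey's formula to $\pi_{\chi'}|_{\ol{\mathbf{T}}(O_F)}$ and restricting to $Z(\ol{T})\cap\ol{\mathbf{T}}(O_F)$, whereas the paper works directly with a character of $A$ occurring in $\pi|_A$.
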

	
	\begin{proof}
		We check this directly using the definition of types. On the one hand, for every $ \omega \in \mathcal{X}(\ol{T}) $ it is clear that $ (\pi_\chi \otimes \omega)|_{\ol{\mathbf{T}}(O_F)} $ contains $ \widetilde{\chi}|_{\ol{\mathbf{T}}(O_F)} $. On the other hand, if an irreducible representation $ \pi $ of $ \ol{T} $ contains $ \widetilde{\chi}|_{\ol{\mathbf{T}}(O_F)} $, then $ \pi|_{A} $ contains $  \widetilde{\chi}\otimes \omega_0 $, where $ \omega_0 $ is a character of $ A/\ol{\mathbf{T}}(O_F) $.
		Note that $ \omega_0 $ extends to a character $ \omega \in \mathcal{X}(\ol{T}) $. We have 
		$$ \pi \simeq \text{ind}_{A}^{\ol{T}} (\widetilde{\chi} \otimes \omega_0)=\pi_{\chi} \otimes \omega,$$by \eqref{rep cor}. This completes the proof.
	\end{proof}

	\section{The structure of Hecke algebras}
	In this section we study the structure of Hecke algebras associated with genuine principal series components of $ \ol{G} $.
	\subsection{Depth zero case}
	Let $ \chi: \ol{\mathbf{T}}(O_F) \rightarrow \C^{\times}$ be a depth zero genuine character of $ \ol{\mathbf{T}}(O_F) $, i.e., $ \chi|_{\mathbf{T}(O_F)} $ factors through $ \mathbf{T}(O_F) \rightarrow \mathbf{T}(\kappa_F)$. 
	
	The normalizer $ N(\ol{T}) \subset \ol{G}$ acts on characters of $ \ol{\mathbf{T}}(O_F) $ by $ (g\cdot\chi)(t)=\chi(g^{-1}tg) $ for $ g \in N(\ol{T}) $ and $ t \in \ol{\mathbf{T}}(O_F) $. Since $ Z(\ol{T}) \ol{\mathbf{T}}(O_F)  $ acts trivially, the action factors to an action of $ W_{\text{ex}}/Y_{Q,n} \simeq N(\ol{T})/Z(\ol{T})\ol{\mathbf{T}}(O_F) $ on the characters of $ \ol{\mathbf{T}}(O_F) $. 
	
	Let $ \ol{B}=\ol{T}U $ and $ U^{\prime} $ be the opposite of $ U $ relative to $ \ol{T} $. Consider the Iwahori decomposition $$ \ol{I}=(\ol{I}\cap U^{\prime})\cdot \ol{\mathbf{T}}(O_F) \cdot (\ol{I}\cap U)$$of $ \ol{I} $. We can extend $ \chi $ to a character $ \rho_\chi $ of $ \ol{I} $, i.e., $ \rho_\chi(a t b)=\chi(t) $ for $ a \in \ol{I}\cap U^{\prime} $, $ t \in \ol{\mathbf{T}}(O_F) $ and $  b \in \ol{I}\cap U $. Let $ \mathcal{H}(\ol{G},\rho_{\chi}) $ be the $ \rho_{\chi}^{-1} $-spherical Hecke algebra consisting of $ \bar{\epsilon} $-genuine locally constant compactly supported functions on $ \ol{G} $ such that $$ f( \gamma_1 g \gamma_2)=\rho_{\chi}(\gamma_1)^{-1}f(g)\rho_{\chi}(\gamma_2)^{-1} $$for all $ g \in \ol{G} $ and $ \gamma_1,\gamma_2 \in \ol{I} $. Let $ \mathcal{I}_{\chi} $ be the identity in $ \mathcal{H}(\ol{G}, \rho_\chi)$. Indeed $ \mathcal{H}(\ol{G},\rho_{\chi})  $ is a subalgebra in $ \mathcal{H}_{\bar{\epsilon}}(\ol{G},I_1) $.
	
	For $ \alpha \in \Phi $ and $ k \in \Z $, define $ \chi_{\alpha+k}:O_{F}^{\times}\rightarrow \C^{\times} $ by
	\begin{equation}
	\chi_{\alpha+k}(x):=\epsilon((\varpi,x)_n^{kQ(\alpha^{\vee})})\chi(\ol{h}_{\alpha}(x)).
	\end{equation} 
	\begin{prop} \label{lm:chi_aff}
		Let $ a=\alpha+k \in \Phi_{\text{{\rm af}}} $ and $ w \in W_{\text{{\rm ex}}} $. Then $ \chi_{a}=(w\cdot\chi)_{wa} $.
	\end{prop}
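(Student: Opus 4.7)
The plan is to exploit the semidirect product decomposition $W_{\text{ex}} = Y \rtimes W$: since the formula $\chi_a = (w\cdot\chi)_{wa}$ behaves well under composition in $w$, it suffices to verify it when $w$ is one of the generators, namely a translation $t_y$ for some $y \in Y$ or a simple reflection $s_\beta$ for some $\beta \in \Delta$. An induction then handles arbitrary $w \in W_{\text{ex}}$.

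For the translation case, I would lift $t_y$ to $\mathbf{s}(y(\varpi^{-1})) \in N(\ol{T})$ and compute directly on affine functions that $t_y(\alpha+k) = \alpha + (k - \langle\alpha, y\rangle)$. The core calculation is the conjugation
\[
\mathbf{s}(y(\varpi^{-1}))^{-1} \cdot \ol{h}_\alpha(x) \cdot \mathbf{s}(y(\varpi^{-1})) = (\varpi, x)_n^{\langle\alpha, y\rangle Q(\alpha^\vee)} \cdot \ol{h}_\alpha(x),
\]
which follows from the commutator formula \eqref{E4} applied to the torus part of $\ol{h}_\alpha(x)$, together with $B_Q(y, \alpha^\vee) = \langle\alpha, y\rangle Q(\alpha^\vee)$ and the bilinearity identity $(\varpi^{-1}, x)_n = (\varpi, x)_n^{-1}$. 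Applying $\chi$ and comparing with the definition of $\chi_{\alpha + (k - \langle\alpha, y\rangle)}$, the Hilbert symbol factor precisely absorbs the shift in the exponent of $k$, and we recover $\chi_a(x)$.

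For the reflection case, lift $s_\beta$ to $\ol{w}_\beta(1)$ and observe $s_\beta(\alpha+k) = s_\beta\alpha + k$. Weyl-invariance of $Q$ gives $Q((s_\beta\alpha)^\vee) = Q(\alpha^\vee)$, so the $(\varpi, x)_n$ prefactor in the definition of $(s_\beta\cdot\chi)_{s_\beta a}$ matches that of $\chi_a$; hence it suffices to establish
\[
\ol{w}_\beta(1)^{-1} \cdot \ol{h}_{s_\beta\alpha}(x) \cdot \ol{w}_\beta(1) = \ol{h}_\alpha(x) \qquad \text{for } x \in O_F^\times.
\]
I would expand $\ol{h}_\alpha(x) = \ol{w}_\alpha(x)\ol{w}_\alpha(-1)$, conjugate each $\ol{e}_{\pm\alpha}$ factor via \eqref{E6}, and then reassemble the resulting product using \eqref{E5}; the only stray factor is a Hilbert symbol $(u_1, u_2)_n^{Q(\alpha^\vee)}$ with both arguments in $O_F^\times$, which is trivial because the tame Hilbert symbol vanishes on $O_F^\times \times O_F^\times$ under the standing assumption $p \nmid n$.

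The main obstacle is the reflection step: one must carefully track the signs $c(\beta, \alpha), c(\beta, -\alpha) \in \{\pm 1\}$ from \eqref{E6} and rely on the compatibility $c(\beta, \alpha) = c(\beta, -\alpha)$ of the chosen Chevalley--Steinberg pinning in order to recognize the conjugate of $\ol{w}_\alpha(x)$ as $\ol{w}_{s_\beta\alpha}(\pm x)$; without this, an asymmetric sign would obstruct the clean rewriting via \eqref{E5} and force an additional $\ol{e}$-term into the computation. The translation case, by contrast, reduces transparently to the bilinear commutator identity and presents no genuine difficulty.
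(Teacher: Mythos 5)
Your proposal is correct and follows essentially the same route as the paper: the paper also factors $w = w_0 y_0$ with $w_0 \in W$, $y_0 \in Y$, computes $wa = w_0\alpha + (k - \langle\alpha,y_0\rangle)$, peels off the lattice part via the commutator identity \eqref{E4}, and finishes with Weyl-invariance of $Q$; your generator-by-generator induction is merely a finer-grained presentation of the same chain of equalities. The one place you add value is that the paper tacitly uses $\dot{w}_0^{-1}\ol{h}_{w_0\alpha}(x)\dot{w}_0 = \ol{h}_\alpha(x)$ without comment, whereas you correctly isolate the two ingredients making it precise: the sign compatibility $c(\beta,\alpha)=c(\beta,-\alpha)$ (which holds since $\mathrm{Ad}(\ol{w}_\beta(1))$ sends $h_\alpha \mapsto h_{s_\beta\alpha}$ on the Lie algebra) and triviality of the tame Hilbert symbol on $O_F^\times \times O_F^\times$ under the standing hypothesis $p\nmid n$.
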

	
	\begin{proof}
		Write $ w=w_0y_0 $, where $ w_0 \in W $ and $ y_0 \in Y $. Then
		
		\begin{equation*}
			\begin{aligned}
				w(\alpha+k)&=
				(w_0y_0)(\alpha+k)\\
				&=w_0(\alpha+k-\langle \alpha,y_0\rangle)\\
				&=w_0\alpha+k-\langle \alpha,y_0\rangle.
			\end{aligned}
		\end{equation*}
		
		Thus for $ x \in O_F^{\times} $ we have 
		\begin{equation*}
			\begin{aligned}
				(w\cdot\chi)_{wa}(x)&=
				\epsilon((\varpi,x)_n^{(k-\langle \alpha,y_0\rangle)Q((w_0\alpha)^{\vee})})(w_0y_0 \cdot \chi)(\ol{h}_{w_0\alpha}(x))\\
				&=	\epsilon((\varpi,x)_n^{(k-\langle \alpha,y_0\rangle)Q((w_0\alpha)^{\vee})})(y_0 \cdot \chi)(\ol{h}_{\alpha}(x))\\
				&=\epsilon((\varpi,x)_n^{(k-\langle \alpha,y_0\rangle)Q((w_0\alpha)^{\vee})}) \chi((\varpi,x)_n^{\langle \alpha,y_0\rangle Q(\alpha^{\vee})}\ol{h}_{\alpha}(x)).
			\end{aligned}
		\end{equation*}
		
		Since $ Q $ is Weyl-invariant, we acquire $ \chi_{a}=(w\cdot\chi)_{wa} $.
	\end{proof}
	Let $$ \Phi_{\chi,\text{af}}=\{a\in \Phi_{\text{af}}:\chi_a=\mathbb{1}\}  \text{ and } W_{\chi}^0=\langle s_a :a \in \Phi_{\chi,\text{af}} \rangle.$$By Proposition \ref{lm:chi_aff}, reflections in $ W_{\chi}^0 $ preserve $ \Phi_{\chi,\text{af}} $. In fact, let $$ \Phi^{\diamondsuit}_{\chi}=\{\alpha \in \Phi:  \chi \circ (\ol{h}_{\alpha})^{n_{\alpha}}|_{O_F^{\times}}=\mathbb{1} \} \text{ and } \Phi_{\chi,\text{af}}^{\diamondsuit}=\{\alpha+kn_\alpha:\alpha \in \Phi^{\diamondsuit}_{\chi}, k \in \Z\}.$$We can show the affine Weyl groups associated with $ \Phi_{\chi,\text{af}} $ and $ \Phi_{\chi,\text{af}}^{\diamondsuit} $ are isomorphic.
	
	\begin{prop} \label{prop:shift}
		There exists $ v \in V $ such that $ \mathsf{t}_v(\Phi_{\chi,\text{\rm af}})= \Phi^{\diamondsuit}_{\chi,\text{\rm af}}$.
	\end{prop}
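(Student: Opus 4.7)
The plan is to reduce the claim to a solvability question for a system of congruences on $v$, and then to verify the consistency of that system using the covering-group relations. The case $\Phi^\diamondsuit_\chi = \emptyset$ is trivial (take $v = 0$), so I assume $\Phi^\diamondsuit_\chi \neq \emptyset$.

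For $x \in O_F^\times$, introduce the characters $\psi_\alpha(x) := \chi(\ol{h}_\alpha(x))$ and $\eta_\alpha(x) := \epsilon((\varpi,x)_n^{Q(\alpha^\vee)})$ of $O_F^\times$; by the depth zero hypothesis on $\chi$ and since $p \nmid n$, both factor through the cyclic group $\kappa_F^\times$. Then $\chi_{\alpha+k} = \psi_\alpha \eta_\alpha^k$, and $\eta_\alpha$ has exact order $n_\alpha$ in $\widehat{\kappa_F^\times}$. Because this cyclic group has a unique subgroup of each order dividing $q-1$, we have $\langle\eta_\alpha\rangle = \{\varphi : \varphi^{n_\alpha}=\mathbb{1}\}$. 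Consequently $\alpha \in \Phi^\diamondsuit_\chi$ if and only if there is a unique $k_0(\alpha) \in \Z/n_\alpha\Z$ with $\psi_\alpha = \eta_\alpha^{-k_0(\alpha)}$, i.e.\ $\chi_{\alpha + k_0(\alpha)} = \mathbb{1}$; in that case $\Phi_{\chi,\text{af}} \cap (\alpha + \Z) = \alpha + k_0(\alpha) + n_\alpha\Z$.

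The action $\mathsf{t}_v(\alpha+k) = \alpha + k - \alpha(v)$ then turns the desired equality $\mathsf{t}_v(\Phi_{\chi,\text{af}}) = \Phi^\diamondsuit_{\chi,\text{af}}$ into the system
\[
\alpha(v) \equiv k_0(\alpha) \pmod{n_\alpha}\quad\text{for all } \alpha \in \Phi^\diamondsuit_\chi,
\]
equivalently $\alpha_{Q,n}(v) \equiv k_0(\alpha)/n_\alpha \pmod{\Z}$. By Pontryagin duality for the real vector space $V = Y \otimes \R$, such a $v$ exists precisely when the assignment $\alpha_{Q,n} \mapsto k_0(\alpha)/n_\alpha \pmod \Z$ extends to a group homomorphism from the $\Z$-span $\Lambda$ of $\{\alpha_{Q,n}\}_{\alpha \in \Phi^\diamondsuit_\chi} \subset X \otimes \Q$ to $\R/\Z$; concretely, whenever $\sum c_\alpha \alpha_{Q,n} = 0$ is an integer relation, one must have $\sum c_\alpha k_0(\alpha)/n_\alpha \in \Z$.

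The verification of this consistency is the main obstacle of the proof. Fix the order-$n$ character $\eta(x) := \epsilon((\varpi,x)_n)$ of $\kappa_F^\times$ and write $\psi_\alpha = \eta^{q_\alpha}$ with $q_\alpha \in \Z/n\Z$; the condition $\alpha \in \Phi^\diamondsuit_\chi$ becomes $\gcd(n,Q(\alpha^\vee)) \mid q_\alpha$, and solving for $k_0$ yields the explicit formula $k_0(\alpha) \equiv -q_\alpha(Q(\alpha^\vee)/\gcd(n,Q(\alpha^\vee)))^{-1} \pmod{n_\alpha}$. The required linearity thus reduces to a multiplicative identity among the $\psi_\alpha$'s across integer relations in $\Phi^\diamondsuit_\chi$, which is extracted from the Chevalley--Steinberg relations \eqref{E1}, \eqref{E5}, \eqref{E7} combined with the Weyl invariance of $Q$ and the $W$-equivariance of Proposition \ref{lm:chi_aff}. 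The computation is substantially simplified by the triviality of the Hilbert symbol $(\cdot,\cdot)_n$ on $O_F^\times \times O_F^\times$ when $p \nmid n$, which makes $\psi_\alpha$ a genuine character of $O_F^\times$ and kills the cocycle error terms that would otherwise proliferate. Once this linearity is confirmed, $v$ is obtained by lifting the resulting homomorphism $\Lambda \to \R/\Z$ to $\Lambda \to \R$ and extending via duality to $V$.
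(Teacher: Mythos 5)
Your reduction of the proposition to a solvability question — writing $\Phi_{\chi,\text{af}}\cap(\alpha+\Z)=\alpha+k_0(\alpha)+n_\alpha\Z$ and observing that $v$ exists iff $\alpha_{Q,n}\mapsto k_0(\alpha)/n_\alpha \pmod{\Z}$ is additive across $\Z$-relations on the $\alpha_{Q,n}$ — is correct and parallels the paper. The gap is that you stop exactly at what you yourself call ``the main obstacle'': you assert that the required linearity ``is extracted from the Chevalley--Steinberg relations combined with the Weyl invariance of $Q$,'' but you never carry out the extraction. That extraction is the mathematical content of the proposition. The paper actually does it: it fixes a set of simple coroots $\{\alpha_1^\vee,\dots,\alpha_m^\vee\}$ in $(\Phi^\diamondsuit_\chi)^\vee$, defines $v$ by $\langle\alpha_i,v\rangle=c_{\alpha_i}$, and then, given $\alpha^\vee=\sum k_i\alpha_i^\vee$, derives from the multiplicativity of $\chi\circ\ol{h}_{(-)}$ the congruence $n\mid \sum k_i c_{\alpha_i}Q(\alpha_i^\vee)-c_\alpha Q(\alpha^\vee)$, and finally uses the Weyl-invariance identity $Q(\alpha_i^\vee)/Q(\alpha^\vee)=l^2(\alpha_i^\vee,\alpha^\vee)$ together with $\alpha=\sum k_i l^2(\alpha_i^\vee,\alpha^\vee)\alpha_i$ to conclude $\langle\alpha,v\rangle\equiv c_\alpha\pmod{n_\alpha}$. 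The nontrivial point — how a coroot relation $\alpha^\vee=\sum k_i\alpha_i^\vee$, weighted by the $Q$-values, produces a root-side (and then $\alpha_{Q,n}$-side) congruence — is precisely what needs to be shown and is not automatic from ``Pontryagin duality'' or the triviality of the Hilbert symbol on $O_F^\times$ alone.

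There is a second omission. The choices $k_0(\alpha)$ (the paper's $c_\alpha$) are only well-defined modulo $n_\alpha$, and the verification described above only goes through for a \emph{good} choice of representatives. When $\Phi^\diamondsuit_\chi$ has two root lengths with ratio $l^2$, the coefficients $Q(\alpha_i^\vee)/Q(\alpha^\vee)=l^2(\alpha_i^\vee,\alpha^\vee)$ can be the fraction $1/l^2$; the paper handles this by splitting into the cases $l^2\mid n$ (where the fractions clear automatically) and $l^2\nmid n$ (where one exploits $\gcd(l^2,n_{\alpha_i})=1$ to choose $c_{\alpha_i}$ divisible by $l^2$). Your proposal never normalizes the $k_0(\alpha)$'s and does not engage with this issue, so even if you filled in the multiplicativity argument, you would still have a genuine gap in the non-simply-laced case.
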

	
	\begin{proof}
		Since $ \epsilon((\varpi,-)^{Q(\alpha^{\vee})}_n) $ has order $ n_\alpha $, we observe that $ \alpha \in \Phi^{\diamondsuit}_{\chi} $ if and only if there exists a constant $ c_\alpha \in \Z $ such that $ \chi(\ol{h}_{\alpha}(x))=\epsilon((\varpi,x)_{n}^{-c_\alpha Q(\alpha^{\vee})}) $ for $ x \in O_F^{\times} $. Then $$ \Phi_{\chi,\text{af}}=\{\alpha+c_\alpha+kn_\alpha:\alpha \in \Phi^{\diamondsuit}_{\chi}, k \in \Z\}. $$ 
		
		Choose a set of roots $ \{\alpha_1,...,\alpha_m\} $ in $ \Phi^{\diamondsuit}_\chi $ such that $ \{\alpha_1^{\vee},...,\alpha_m^{\vee}\} $ is a set of simple roots in $ (\Phi^{\diamondsuit}_\chi)^{\vee} $. Without loss of generality, we assume $  \Phi^{\diamondsuit}_\chi  $ is irreducible. Then there are at most two root lengths and we denote by $ l^2 $ the ratio of square lengths between one long root and one short root. If $ l^2 \nmid n $, we can choose $ c_{\alpha_i} $ such that $ c_{\alpha_i}/l^2 \in \Z $.

		Take $ v \in V $ such that $ \langle \alpha_i,v \rangle=c_{\alpha_i} $ for $ i=1,...,m $. If $ \alpha^{\vee}=\sum_{i=1}^{m}k_i\alpha_i^{\vee} $ for $ \alpha \in \Phi^{\diamondsuit}_{\chi} $, then $$ n\mid \sum_{i=1}^{m}k_ic_{\alpha_i}Q(\alpha^{\vee}_i)-c_{\alpha}Q(\alpha^{\vee}). $$Let $ l^2(\alpha_i^{\vee},\alpha^{\vee}) $ be the ratio of square lengths between $ \alpha_i^{\vee} $ and $ \alpha^{\vee} $. Since $ Q $ is a Weyl-invariant quadratic form, we have $ \frac{Q(\alpha_i^{\vee})}{Q(\alpha^{\vee})}=l^2(\alpha_i^{\vee},\alpha^{\vee}) $. If $ l^2 \mid n $, then clearly $ \sum_{i=1}^{m}\frac{k_ic_{\alpha_i}Q(\alpha_i^{\vee})}{Q(\alpha^{\vee})}$ must be an integer and $ c_\alpha $ is of the form $ \sum_{i=1}^{m}\frac{k_ic_{\alpha_i}Q(\alpha_i^{\vee})}{Q(\alpha^{\vee})}+kn_\alpha $ for $ k \in \Z $. If $ l^2 \nmid n $, we have the same argument due to the choice of $ c_{\alpha_i} $.
		
		Since $ \alpha=\sum_{i=1}^{m}k_il^2(\alpha_i^{\vee},\alpha^{\vee})\alpha_i $, we have $$ \langle \alpha, v\rangle= \sum_{i=1}^{m}\frac{k_ic_{\alpha_i}Q(\alpha_i^{\vee})}{Q(\alpha^{\vee})}. $$Thus $ \mathsf{t}_{v}(\Phi_{\chi,\text{af}})= \Phi^{\diamondsuit}_{\chi,\text{af}}$.
	\end{proof}

	Let $ A_{\chi,0} $ be the connected component of $ V- \cup_{a \in  \Phi_{\chi,\text{af}}}H_a$ containing $ A_0 $. Let $ \Delta_{\chi} $ be the set of simple roots with respect to $ A_{\chi,0} $ and $ S_{\chi}^0=\{s_a:a \in \Delta_{\chi}\} $ be the set of corresponding reflections.
	
	Let $$W_{\chi}=\{w\in W_{\text{ex}}:w\cdot\chi =\chi\} \text{ and } \Omega_{\chi}=\{w\in W_{\chi} :wA_{\chi,0}=A_{\chi,0}\}. $$
	
	We note that for $ a=\alpha+k \in \Phi_{\chi,\text{af}} $ and $ x \in O^{\times}_F $ $$(s_a\chi)(\mathbf{s}(y(x)))=\chi(\mathbf{s}(y(x)))\chi_a(x^{-\langle \alpha,y \rangle})  $$by \eqref{E4} and \eqref{E3}, so $ W_{\chi}^{0} $ is a subgroup of $ W_{\chi}  $. Since $ W_{\chi} $ acts on the connected components of $ V- \cup_{a \in  \Phi_{\chi,\text{af}}}H_a$ and $ W_{\chi}^0 $ acts on them transitively, we have $ W_{\chi}=W_{\chi}^{0} \rtimes \Omega_{\chi}$.
	
	Denote $ \Lambda(w):=q^{l(w)}$ for $ w \in W_{\text{ex}} $. Define $ C_{w_1,w_2}:=\Lambda(w_1)^{\frac{1}{2}}\Lambda(w_2)^{\frac{1}{2}}\Lambda(w_1w_2)^{-\frac{1}{2}} $ for $ w_1,w_2 \in W_{\text{ex}} $. 
	
	The following three lemmas are adaptations of the results in \cite{Mo}.
	
	\begin{lm} \label{lm1}
		Let $ w \in W_{\text{\rm ex}} $ and $ a \in \Delta_{\text{\rm af}} $. Suppose either $ w^{-1}a>0 $, or $ w^{-1}a \notin \Phi_{\chi,\text{\rm af}} $. Let $ \dot{v} \in N(\ol{T}) $ be a representaive of $ s_a $ and $ \dot{w} \in N(\ol{T}) $ be a representative of $ w $. Let $ \chi^{\prime}=(s_aw)\cdot \chi $ and $ \mathcal{I}_{\chi^{\prime}}$ be the identity in $ \mathcal{H}(\ol{G}, \rho_{\chi^{\prime}}) $. Then 
		$$ \mathcal{I}_{ \chi^{\prime}} T_{\dot{v}}T_{\dot{w}}\mathcal{I}_{\chi}=C_{s_a,w} \mathcal{I}_{ \chi^{\prime}} T_{\dot{v}\dot{w}}\mathcal{I}_{\chi}.$$
	\end{lm}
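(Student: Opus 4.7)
The plan is to split on the sign of $w^{-1}a$, invoking the braid relation directly in the easy case and, in the harder case, using the quadratic relation together with an orthogonality-of-characters argument to kill the unwanted summand. First, using the length formula \eqref{length}, one computes $C_{s_a,w}=q^{(1+l(w)-l(s_aw))/2}$: when $w^{-1}a>0$, $l(s_aw)=l(w)+1$, so $C_{s_a,w}=1$; when $w^{-1}a<0$, $l(s_aw)=l(w)-1$, so $C_{s_a,w}=q$.

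In the case $w^{-1}a>0$, lengths add in $\dot v\dot w$, so Proposition~\ref{lm:br} gives $T_{\dot v}T_{\dot w}=T_{\dot v\dot w}$, and flanking both sides by $\mathcal{I}_{\chi'}$ and $\mathcal{I}_\chi$ yields the identity. In the remaining case $w^{-1}a<0$ with $w^{-1}a\notin\Phi_{\chi,\text{\rm af}}$, set $\dot u:=\dot v^{-1}\dot w$, a representative of $s_aw$; the equality $l(\dot v)+l(\dot u)=l(\dot w)$ and Proposition~\ref{lm:br} give $T_{\dot w}=T_{\dot v}T_{\dot u}$, hence $T_{\dot v}T_{\dot w}=T_{\dot v}^{2}T_{\dot u}$. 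Applying the appropriate quadratic relation from Proposition~\ref{lm:qu} (namely \eqref{quad1} when $a=\alpha\in\Delta$, or \eqref{quad2} when $a=-\beta+1$ with $\beta\in\Phi_{\text{max}}$), and using that $\dot v^{2}\in\ol{\mathbf{T}}(O_F)$ has length zero so that $T_{\dot v^{2}}T_{\dot u}=T_{\dot v^{2}\dot u}=T_{\dot v\dot w}$, one obtains
\begin{equation*}
T_{\dot v}T_{\dot w}=q\,T_{\dot v\dot w}+(q-1)\,c_a\,T_{\dot w},
\end{equation*}
with $c_a=c_\alpha(\mathbb{1})$ or $c_a=c_\beta((-,\varpi)_n^{Q(\beta^\vee)})$ respectively. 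Since $C_{s_a,w}=q$, it remains to prove the vanishing $\mathcal{I}_{\chi'}\,c_a\,T_{\dot w}\,\mathcal{I}_{\chi}=0$.

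To establish this vanishing, I would expand $c_a$ according to its definition as a sum over $u\in\kappa_F^\times$ of terms $T_{\ol{h}_\alpha(u)}$ with the appropriate $\epsilon$-weights. Commuting each $\ol{h}_\alpha(u)\in\ol{\mathbf{T}}(O_F)$ through $\dot w$ via the commutator formula \eqref{E4} and the conjugation identity \eqref{E3}, then absorbing the resulting torus factors into the right $\chi$-equivariance of $\mathcal{I}_\chi$ and the left $\chi'$-equivariance of $\mathcal{I}_{\chi'}$, should collect all dependence on $u$ into a single character sum over $\kappa_F^\times$. Proposition~\ref{lm:chi_aff} combined with the relation $\chi'=(s_aw)\cdot\chi$ then identifies this sum as an orthogonality sum attached to $\chi_{w^{-1}a}$; the hypothesis $w^{-1}a\notin\Phi_{\chi,\text{\rm af}}$ --- i.e.\ $\chi_{w^{-1}a}\neq\mathbb{1}$ --- together with the depth-zero nature of $\chi$ makes the associated character of $\kappa_F^\times$ nontrivial, so the sum vanishes. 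The main obstacle is precisely this identification step: tracking the Hilbert-symbol cocycles from \eqref{E1} and \eqref{E5}--\eqref{E7} to confirm that the residual character on $\kappa_F^\times$ is indeed $\chi_{w^{-1}a}$, especially in the affine case $a=-\beta+1$, where the twist $\epsilon((\varpi,-)_n^{Q(\beta^\vee)})$ built into the definition of $\chi_a$ must be reconciled with the corresponding twist appearing in the weight of $c_\beta$.
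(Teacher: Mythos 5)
Your overall strategy matches the paper's proof: case split on the sign of $w^{-1}a$, braid relation when $w^{-1}a>0$, quadratic relation plus a vanishing argument when $w^{-1}a<0$, and the same expansion $T_{\dot v}T_{\dot w}=qT_{\dot v\dot w}+(q-1)c_aT_{\dot w}$ (modulo the fact that a general representative $\dot v$ of $s_a$ produces a $T_{\dot v t_1}$ with $t_1\in\ol{\mathbf T}(O_F)$ in place of $T_{\dot v}$, which changes nothing once $\mathcal{I}_{\chi'}$ is applied). Where you diverge, unnecessarily, is in how to see $\mathcal{I}_{\chi'}\,c_a\,T_{\dot w}\,\mathcal{I}_\chi=0$: you propose to push $\ol{h}_\alpha(u)$ rightward through $\dot w$ and absorb the conjugated torus element into the right $\chi$-equivariance of $\mathcal{I}_\chi$, and you flag the resulting Hilbert-symbol bookkeeping as the main obstacle. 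That commutation is never needed. Since $c_a$ sits immediately to the right of $\mathcal{I}_{\chi'}$ and each $\ol{h}_\alpha(u)$ lies in $\ol{\mathbf T}(O_F)\subset\ol{I}$, the left $\rho_{\chi'}^{-1}$-equivariance of the idempotent $\mathcal{I}_{\chi'}$ alone collapses $\mathcal{I}_{\chi'}c_a$ to a character sum over $\kappa_F^\times$ times $\mathcal{I}_{\chi'}$, and the character appearing is exactly $(\chi')_a$ (the twist $\epsilon((-,\varpi)_n^{Q(\beta^\vee)})$ in the weight of $c_\beta$ is already matched by the twist built into the definition of $\chi_a$ for $a=-\beta+1$, so no reconciliation is required). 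Proposition~\ref{lm:chi_aff} applied to $\chi'=(s_aw)\cdot\chi$ then gives $(\chi')^{-1}_a=\chi_{w^{-1}a}$, so the sum is nonzero only if $w^{-1}a\in\Phi_{\chi,\text{af}}$, which you have excluded. The paper's proof does exactly this, entirely on the left of the expression, and never moves anything past $T_{\dot w}$.
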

	
	\begin{proof}
		If $ w^{-1}a>0 $, then by \eqref{length} we have $ l(s_aw)=l(s_a)+l(w) $ and $ \Lambda(s_aw)=\Lambda(s_a)\Lambda(w) $. Thus $ \mathcal{I}_{ \chi^{\prime}} T_{\dot{v}}T_{\dot{w}}\mathcal{I}_{\chi}=C_{s_a,w} \mathcal{I}_{ \chi^{\prime}} T_{\dot{v}\dot{w}}\mathcal{I}_{\chi} $ follows directly from Proposition \ref{lm:br}.
		
		Now suppose $ w^{-1}a<0 $, we can write $ w=s_aw^{\prime} $ such that $ l(w)=l(s_a)+l(w^{\prime}) $ and we take the representative $ \dot{w}^{\prime} $ such that $ \dot{w}=\dot{v}\cdot\dot{w}^{\prime} $. If $ a \in \Delta $, by \eqref{quad1}
		$$\mathcal{I}_{ \chi^{\prime}} T_{\dot{v}}T_{\dot{w}}\mathcal{I}_{\chi}=\mathcal{I}_{ \chi^{\prime}} T_{\dot{v}}T_{\dot{v}}T_{\dot{w}^{\prime}}\mathcal{I}_{\chi}= \mathcal{I}_{ \chi^{\prime}}(q T_{\dot{v}^2}+(q-1)c_a(\mathbf{1})T_{\dot{v}t_1})T_{\dot{w}^{\prime}}\mathcal{I}_{\chi}, $$where $ t_1$ is some element in $ \ol{\mathbf{T}}(O_F) $.
		But $ \mathcal{I}_{\chi^{\prime}}c_a(\mathbf{1})\neq 0 $ if and only if $( \chi^{\prime} )^{-1}_a=\mathbb{1}$. Since $ \chi^{\prime}=(s_aw)\cdot \chi $, then $ ( \chi^{\prime} )^{-1}_a=\chi_{w^{-1}a} $ by Proposition \ref{lm:chi_aff}. Thus if $ w^{-1}a \notin \Phi_{\chi,\text{af}} $, we have $  \mathcal{I}_{ \chi^{\prime}} T_{\dot{v}}T_{\dot{w}}\mathcal{I}_{\chi}=q \mathcal{I}_{ \chi^{\prime}} T_{\dot{v}\dot{w}}\mathcal{I}_{\chi}. $
		
		If $ a=-\beta+1 $ for some $ \beta \in \Phi_{\text{max}} $. Then by \eqref{quad2},
		$$ \mathcal{I}_{ \chi^{\prime}} T_{\dot{v}}T_{\dot{w}}\mathcal{I}_{\chi}=\mathcal{I}_{ \chi^{\prime}} T_{\dot{v}}T_{\dot{v}}T_{\dot{w}^{\prime}}\mathcal{I}_{\chi}= \mathcal{I}_{ \chi^{\prime}}(q T_{\dot{v}^2}+(q-1)c_{\beta}((-,\varpi)_n^{Q(\beta^{\vee})})T_{\dot{v}t_2})T_{\dot{w}^{\prime}}\mathcal{I}_{\chi}, $$where $ t_2$ is some element in $ \ol{\mathbf{T}}(O_F) $. Similarly, we see $ \mathcal{I}_{\chi^{\prime}}c_{\beta}((-,\varpi)_n^{Q(\beta^{\vee})})\neq 0 $ if and only if $ (\chi^{\prime})^{-1}_{\beta}=\epsilon((-,\varpi)_n^{Q(\beta^{\vee})})$. This is equivalent to $ \chi^{\prime}_{-\beta+1}=\mathbb{1} $. By the same argument, we conclude that  $ \mathcal{I}_{\chi^{\prime}}c_{\beta}((-,\varpi)_n^{Q(\beta^{\vee})})= 0 $ if $ w^{-1}a \notin \Phi_{\chi,\text{\rm af}} $.
		
		By the definition of $ C_{s_a,w}$, it is easy to obtain $ C_{s_a,w}=q $ if $ w^{-1}a <0 $,
		which completes the proof.
	\end{proof}
	
	\begin{lm} \label{lm2}
		Let $ w \in W_{\chi} $ and $ v \in W_{\text{\rm ex}} $. Let $ \dot{w} $ and $ \dot{v} $ be corresponding representatives in $ N(\ol{T}) $. Suppose that $ N(w^{-1})\cap N(v) \cap \Phi_{\chi,\text{\rm af}}=\emptyset $. Let $ \chi^{\prime}=(vw)\cdot \chi $ and $ \mathcal{I}_{\chi^{\prime}}$ be the identity in $ \mathcal{H}(\ol{G}, \rho_{\chi^{\prime}}) $. Then $$\mathcal{I}_{\chi^{\prime}} T_{\dot{v}}T_{\dot{w}}\mathcal{I}_{\chi}=C_{v,w} \mathcal{I}_{\chi^{\prime}} T_{\dot{v}\dot{w}}\mathcal{I}_{\chi} .$$ 
	\end{lm}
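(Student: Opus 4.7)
The plan is to prove Lemma \ref{lm2} by induction on $l(v)$, reducing to Lemma \ref{lm1}. The base case $l(v)=1$ reduces directly: if $v=s_a$ then $N(v)=\{a\}$, so the hypothesis $N(w^{-1})\cap N(v)\cap \Phi_{\chi,\text{af}}=\emptyset$ reads ``$a\notin N(w^{-1})$ or $a\notin \Phi_{\chi,\text{af}}$''. Proposition \ref{lm:chi_aff} shows that $w\in W_\chi$ preserves $\Phi_{\chi,\text{af}}$, hence $a\notin\Phi_{\chi,\text{af}}$ is equivalent to $w^{-1}a\notin\Phi_{\chi,\text{af}}$, matching the hypothesis of Lemma \ref{lm1}. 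The case $l(v)=0$ is immediate from Proposition \ref{lm:br}, since then $\Lambda(v)=1$, $l(vw)=l(w)$, and $C_{v,w}=1$.

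For the inductive step, choose $a\in\Delta_{\text{af}}$ with $v^{-1}a<0$ and write $v=s_av'$ with $l(v')=l(v)-1$; then $v'^{-1}a>0$ and the standard length-additive identity gives $N(v)=N(v')\sqcup\{v'^{-1}a\}$, so in particular $N(v')\subset N(v)$. Set $\chi'':=(v'w)\cdot\chi$, so that $s_a\cdot\chi''=\chi'$. Using the relation $T_{\dot{s}_a}T_t=T_{s_a(t)}T_{\dot{s}_a}$ for $t\in\ol{\mathbf{T}}(O_F)$, which follows from Proposition \ref{lm:br}, one obtains $\mathcal{I}_{\chi'}T_{\dot{s}_a}=\mathcal{I}_{\chi'}T_{\dot{s}_a}\mathcal{I}_{\chi''}$, whence
\[
\mathcal{I}_{\chi'}T_{\dot{v}}T_{\dot{w}}\mathcal{I}_\chi=\mathcal{I}_{\chi'}T_{\dot{s}_a}T_{\dot{v}'}T_{\dot{w}}\mathcal{I}_\chi=\mathcal{I}_{\chi'}T_{\dot{s}_a}\mathcal{I}_{\chi''}T_{\dot{v}'}T_{\dot{w}}\mathcal{I}_\chi.
\]
Because $N(v')\subset N(v)$, the pair $(v',w)$ inherits the hypothesis, and the inductive hypothesis yields $\mathcal{I}_{\chi''}T_{\dot{v}'}T_{\dot{w}}\mathcal{I}_\chi=C_{v',w}\,\mathcal{I}_{\chi''}T_{\dot{v}'\dot{w}}\mathcal{I}_\chi$.

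Next I apply Lemma \ref{lm1} to the pair $(s_a,v'w)$, noting that $(s_a\cdot v'w)\cdot\chi=\chi'$; the hypothesis to verify is $(v'w)^{-1}a>0$ or $(v'w)^{-1}a\notin\Phi_{\chi,\text{af}}$. Suppose both fail; setting $b:=v'^{-1}a>0$, one has $(v'w)^{-1}a=w^{-1}b<0$, so $b\in N(w^{-1})$ and $w^{-1}b\in\Phi_{\chi,\text{af}}$. By Proposition \ref{lm:chi_aff} and $w\in W_\chi$, this forces $b\in\Phi_{\chi,\text{af}}$. But $b=v'^{-1}a\in N(v)$ by the above decomposition, contradicting the hypothesis $N(w^{-1})\cap N(v)\cap\Phi_{\chi,\text{af}}=\emptyset$. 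Hence Lemma \ref{lm1} applies and gives $\mathcal{I}_{\chi'}T_{\dot{s}_a}T_{\dot{v}'\dot{w}}\mathcal{I}_\chi=C_{s_a,v'w}\,\mathcal{I}_{\chi'}T_{\dot{v}\dot{w}}\mathcal{I}_\chi$.

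Combining yields the total factor $C_{v',w}\cdot C_{s_a,v'w}$, which equals $C_{v,w}$ by a direct computation using $\Lambda(v)=\Lambda(s_a)\Lambda(v')$. The main obstacle is verifying that the Lemma \ref{lm1} hypothesis for $(s_a,v'w)$ is inherited from the hypothesis on $(v,w)$; the contrapositive argument above is the technical crux, and it critically uses the $W_\chi$-stability of $\Phi_{\chi,\text{af}}$ supplied by Proposition \ref{lm:chi_aff}.
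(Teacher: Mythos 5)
Your proof is correct and follows essentially the same route as the paper: induction on $l(v)$, peeling off a simple affine reflection $s_a$ on the left, applying the inductive hypothesis to the shorter pair, and verifying the Lemma \ref{lm1} hypothesis for $(s_a, v'w)$ via the $W_\chi$-stability of $\Phi_{\chi,\text{af}}$ plus the disjointness assumption. The only difference is cosmetic: you spell out the idempotent-insertion step $\mathcal{I}_{\chi'}T_{\dot{s}_a}=\mathcal{I}_{\chi'}T_{\dot{s}_a}\mathcal{I}_{\chi''}$, which the paper leaves implicit.
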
 
	
	\begin{proof}
		Use induction on $ l(v) $. If $ l(v)=0 $ then $ l(vw)=l(v)+l(w) $ and the result follows from the the braid relations of pro-$ p $ Iwahori Hecke algebra $ \mathcal{H} $.
		
		Suppose $ l(v)>0 $, we can find $ a \in \Delta_{\text{af}} $ such that $ v=s_au $ and $ l(v)=l(s_a)+l(u) $. So by \eqref{geo-length}, $ N(u) \subseteq N(v) $. Let $ \dot{v}_1 $ be a representative for $ s_a $ and $ \dot{u} $ be a representative for $ u $ such that $ \dot{v}=\dot{v}_1 \dot{u} $. We can use induction to get
		$$\mathcal{I}_{(uw)\cdot \chi} T_{\dot{u}}T_{\dot{w}}\mathcal{I}_{\chi}=C_{u,w} \mathcal{I}_{(uw)\cdot \chi} T_{\dot{u}\dot{w}}\mathcal{I}_{\chi} .$$
		
		Now we need to check either $ w^{-1}u^{-1}a>0 $ or $ w^{-1}u^{-1}a \notin \Phi_{\chi,\text{af}} $. Suppose $ w^{-1}u^{-1}a<0 $, then $ u^{-1}a \notin N(v) \cap \Phi_{\chi,\text{af}} $. But $ v(u^{-1}a)=-a<0 $. Then $ u^{-1}a \notin \Phi_{\chi,\text{af}} $ and $ w^{-1}u^{-1}a \notin \Phi_{\chi,\text{af}} $ since $ w $ preserves $ \Phi_{\chi,\text{af}} $.
		
		We can verify directly that $ C_{v,w}=C_{s_a,uw}\cdot C_{u,w}$. Then using Lemma \ref{lm1}, we obtain the lemma.
	\end{proof}
	
	Define $ \ol{w}_{a}:=\ol{w}_{\alpha}(\varpi^{k}) $ for $ a=\alpha+k \in \Phi_{\text{af}}$.
	\begin{lm} \label{lm3}
		Let $ a=\alpha+k \in \Delta_{\chi} $ and $ v=s_a $.  Let $ E_{ \ol{w}_{a}}:=\Lambda(v)^{-\frac{1}{2}}\mathcal{I}_{\chi}T_{ \ol{w}_{a}}\mathcal{I}_{\chi} $. Then
		$$E_{ \ol{w}_{a}}^2=\mathcal{I}_{\chi}+q^{-\frac{1}{2}}(q-1)E_{ \ol{w}_{a}}. $$
	\end{lm}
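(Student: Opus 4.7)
The plan is to use the commutation $T_{\ol{w}_a}\mathcal{I}_\chi = \mathcal{I}_\chi T_{\ol{w}_a}$ (which holds since $s_a\cdot\chi=\chi$) to collapse the middle idempotent in
$$E_{\ol{w}_a}^2 \;=\; \Lambda(s_a)^{-1}\mathcal{I}_\chi T_{\ol{w}_a}\mathcal{I}_\chi T_{\ol{w}_a}\mathcal{I}_\chi \;=\; \Lambda(s_a)^{-1}\mathcal{I}_\chi T_{\ol{w}_a}^{\,2}\mathcal{I}_\chi,$$
and then to apply the quadratic relations of Proposition \ref{lm:qu}. The commutation itself is established by expanding $\mathcal{I}_\chi = |\mathbf{T}(\kappa_F)|^{-1}\sum_t \chi^{-1}(t)T_t$ (valid since $\chi$ is depth zero), applying the braid relation $T_{\ol{w}_a}T_t = T_{t'}T_{\ol{w}_a}$ with $t' = \ol{w}_a t \ol{w}_a^{-1}$ from Proposition \ref{lm:br} (as $l(t)=0$), identifying the induced conjugation action on characters of $\ol{\mathbf{T}}(O_F)$ with $s_a$ via \eqref{E3}, and using $s_a\cdot\chi=\chi$ from $a\in\Delta_\chi\subset\Phi_{\chi,\text{af}}$.

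In the base case $a\in\Delta_{\text{af}}$ (so $\Lambda(s_a)=q$), Proposition \ref{lm:qu} applies directly. For $a=\alpha\in\Delta$ and $\ol{w}_a=\ol{w}_\alpha(1)$, equation \eqref{quad1} yields $T_{\ol{w}_a}^{\,2} = qT_{\ol{h}_\alpha(-1)} + (q-1)c_\alpha(\mathbf{1})T_{\ol{w}_\alpha(1)}$. The hypothesis $\chi_\alpha=\mathbb{1}$ forces $\chi(\ol{h}_\alpha(-1))=\chi_\alpha(-1)=1$, so $\mathcal{I}_\chi T_{\ol{h}_\alpha(-1)}\mathcal{I}_\chi=\mathcal{I}_\chi$; expanding $c_\alpha(\mathbf{1})$ and using $T_{\ol{h}_\alpha(u)}\mathcal{I}_\chi = \chi_\alpha(u)\mathcal{I}_\chi = \mathcal{I}_\chi$ for each $u\in\kappa_F^\times$ gives $\mathcal{I}_\chi c_\alpha(\mathbf{1})=\mathcal{I}_\chi$. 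Combined with $\mathcal{I}_\chi T_{\ol{w}_a}\mathcal{I}_\chi = q^{1/2}E_{\ol{w}_a}$, one obtains
$$E_{\ol{w}_a}^2 \;=\; q^{-1}\bigl(q\mathcal{I}_\chi + (q-1)q^{1/2}E_{\ol{w}_a}\bigr) \;=\; \mathcal{I}_\chi + q^{-1/2}(q-1)E_{\ol{w}_a}.$$
When $a=-\beta+1$ with $\beta\in\Phi_{\text{max}}$, one has $\ol{w}_a=\ol{w}_\beta(-\varpi^{-1})$ by \eqref{E8}; the same argument runs after rewriting $\ol{w}_a^{\,2}$ via \eqref{E5} and importing \eqref{quad2}. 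The hypothesis $\chi_{-\beta+1}=\mathbb{1}$ is exactly calibrated to absorb the scalar $\epsilon((\varpi,\varpi)_n^{Q(\beta^\vee)})$ in front of $T_{\ol{h}_\beta(-1)}$ into $\mathcal{I}_\chi$ and to force $\mathcal{I}_\chi c_\beta((-,\varpi)_n^{Q(\beta^\vee)})=\mathcal{I}_\chi$.

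For general $a\in\Delta_\chi$ with $l(s_a)>1$ (which can occur whenever $\Delta_\chi\not\subset\Delta_{\text{af}}$, e.g.\ in $\mathrm{SL}_2$ with $n_\alpha>1$), I would reduce to the base case by writing $s_a = vs_{a'}v^{-1}$ with $a'\in\Delta_{\text{af}}$ and $l(s_a)=2l(v)+1$, decomposing $T_{\ol{w}_a}$ via braid relations into a conjugate of $T_{\ol{w}_{a'}}$, and iterating Lemma \ref{lm2} to push the conjugation outside. The main obstacle is the cocycle bookkeeping: one must verify that the Hilbert-symbol factors arising from \eqref{E3}--\eqref{E7} during conjugation, combined with the normalization $\Lambda(s_a)^{-1/2}$, conspire so that the coefficient of $E_{\ol{w}_a}$ is exactly $q^{-1/2}(q-1)$ independent of $l(v)$. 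The key structural input is that $a\in\Phi_{\chi,\text{af}}$ forces $\chi_a=\mathbb{1}$, which is precisely the condition needed for each $c$-element encountered to collapse to $\mathcal{I}_\chi$ after the conjugation.
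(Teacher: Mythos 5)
Your commutation observation $T_{\ol{w}_a}\mathcal{I}_\chi = \mathcal{I}_\chi T_{\ol{w}_a}$ is correct (it is $T_{\ol{w}_a}\mathcal{I}_\chi = \mathcal{I}_{s_a\chi}T_{\ol{w}_a}$ together with $s_a\cdot\chi=\chi$), and the base case $a\in\Delta_{\text{af}}$ is handled properly. But the actual content of the lemma is the case $l(s_a)>1$, and there your sketch stops exactly where the work begins. What you call the ``cocycle bookkeeping obstacle'' is not something that happens to conspire; it is the heart of the argument and has to be proved. Three specific items are missing.

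First, to apply Lemma \ref{lm2} with the conjugating element $w$ (from the decomposition $v=w^{-1}s_bw$, $b\in\Delta_{\text{af}}$, $l(v)=l(s_b)+2l(w)$), you need $N(w)\cap\Phi_{\chi,\text{af}}=\emptyset$. This does not follow from $a\in\Phi_{\chi,\text{af}}$ alone: it uses that $a\in\Delta_\chi$, which gives $N(v)\cap\Phi_{\chi,\text{af}}=\{a\}$, combined with $N(w)\subset N(v)$ and $wa=b$. Your proposal only invokes $a\in\Phi_{\chi,\text{af}}$ for the vanishing of the $c$-elements, and nowhere uses the stronger condition $a\in\Delta_\chi$, even though the lemma is stated for $\Delta_\chi$ for precisely this reason.

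Second, the conjugation $\dot{w}^{-1}\ol{w}_b\dot{w}$ computed via \eqref{E6}--\eqref{E7} only lands on $\ol{w}_\alpha(\pm\varpi^k)$ with an undetermined sign, so $\dot w\ol{w}_a$ and $\ol{w}_b\dot w$ need not be equal on the nose. The paper resolves the sign ambiguity by the explicit computation $\chi(\ol{w}_\alpha(\varpi^k)\ol{w}_\alpha^{-1}(-\varpi^k)) = \chi((\varpi,-1)_n^{kQ(\alpha^\vee)}\ol{h}_\alpha(-1)) = 1$, which is exactly $\chi_a(-1)=1$, a consequence of $\chi_a=\mathbb{1}$. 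This is the precise mechanism by which the Hilbert-symbol factors cancel, and it is not supplied in your proposal. Third, the constant $C_{w,v}$ that accumulates from two applications of Lemma \ref{lm2} has to equal $q^{-1/2}\Lambda(v)^{1/2}$, which rests on $\Lambda(v)=q\Lambda(w)^2$ coming from $l(v)=l(s_b)+2l(w)$. Your outline is essentially the paper's route, but without these verifications the proof is incomplete rather than merely uneconomical.
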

	
	\begin{proof}
		As shown in \cite[\S2]{Mo}, there exists an element $ w \in W_{\text{af}} $ and $ b \in \Delta_{\text{af}} $ such that $ w^{-1}s_bw=v $ and $ l(v)=l(s_b)+2l(w) $.
		
		Since $ A_{\chi,0} $ contains $ A_{0} $ by definition, we observe that $ a^{\prime}(x)>0 $ for all $x\in A_{\chi,0}  $ if and only if $ a^{\prime}(x)>0 $ for all $ x\in A_0 $, where  $ a^{\prime}$ is an affine root in  $\Phi_{\chi,\text{af}} $. Then we see $ N(v) \cap \Phi_{\chi,\text{af}}=\{a\} $ since $ a \in \Delta_{\chi} $. But $ N(w) \subset N(v) $ by \eqref{geo-length} and $ wa=b \in \Delta_{\text{af}} $. Thus the set $ N(w) \cap \Phi_{\chi,\text{af}} $ is empty. 
		
		Assume $ w=s_{a_1}...s_{a_k} $ is a reduced expression for $ w $. We take $ \dot{w}=\ol{w}_{a_1}...\ol{w}_{a_k}$. Then $ \dot{w}^{-1}\ol{w}_b \dot{w}=\ol{w}_{\alpha}(\varpi^{k})$ or $ \ol{w}_{\alpha}(-\varpi^{k}) $ by \eqref{E6} and \eqref{E7}. However, by computation $$ \chi(\ol{w}_{\alpha}(\varpi^{k})\ol{w}^{-1}_{\alpha}(-\varpi^{k}))=\chi((\varpi,-1)^{kQ(\alpha^{\vee})}_n\ol{h}_{\alpha}(-1))=1 .$$
		Thus $\mathcal{I}_{\chi}T_{\ol{w}_{\alpha}(\varpi^{k})}\mathcal{I}_{\chi}= \mathcal{I}_{\chi}T_{\ol{w}_{\alpha}(-\varpi^{k})}\mathcal{I}_{\chi} $.
		Then by Proposition \ref{lm:br} and Lemma \ref{lm2}, we have	$$ \mathcal{I}_{w \chi}T_{\dot{w}}T_{\ol{w}_a}\mathcal{I}_{\chi}=C_{w,v}\mathcal{I}_{w \chi}T_{\ol{w}_b}T_{\dot{w}}\mathcal{I}_{\chi} $$
		and \begin{equation*}
			\begin{aligned}
				\mathcal{I}_{w \chi}T_{\dot{w}}T_{\ol{w}_a}T_{\ol{w}_a}\mathcal{I}_{\chi}&=
				C_{w,v}\mathcal{I}_{w \chi}T_{\ol{w}_b}T_{\dot{w}}T_{\ol{w}_a}\mathcal{I}_{\chi} \\
				&= C_{w,v}^2\mathcal{I}_{w \chi}T_{\ol{w}_b}T_{\ol{w}_b}T_{\dot{w}}\mathcal{I}_{\chi}.
			\end{aligned}
		\end{equation*}
		
		Since $ (w\chi)_b=\chi_{w^{-1}b}=\chi_a=\mathbb{1} $, by Proposition \ref{lm:qu} $$ 	\mathcal{I}_{w \chi} T_{\ol{w}_b}T_{\ol{w}_b}=q\mathcal{I}_{w \chi}+(q-1)\mathcal{I}_{w \chi}T_{\ol{w}_b}.$$
		
		Thus $ \mathcal{I}_{w \chi}T_{\dot{w}}T_{\ol{w}_a}T_{\ol{w}_a}\mathcal{I}_{\chi}=qC_{w,v}^2\mathcal{I}_{w \chi}T_{\dot{w}}\mathcal{I}_{\chi}+(q-1)	C_{w,v}\mathcal{I}_{w \chi}T_{\dot{w}}T_{\ol{w}_a}\mathcal{I}_{\chi} .$
		
		Indeed since $ \Lambda(v)=q\cdot\Lambda(w)^2 $, we can compute that $ C_{w,v}=q^{-\frac{1}{2}}\Lambda(v)^{\frac{1}{2}} $, which completes the proof.
	\end{proof}

	Now we can summarize the structure of $ \mathcal{H}(\ol{G},\rho_{\chi}) $.
	\begin{thm} \label{thm:main}
		Let $ \dot{w} \in N(\ol{T})$ be a representative of  $ w \in W_{\chi} $. Let \begin{equation} \label{E_w}
			E_{\dot{w}}:=\Lambda(w)^{-\frac{1}{2}}\mathcal{I}_{\chi}T_{\dot{w}}\mathcal{I}_{\chi} .
		\end{equation}
		
		\begin{itemize}
			\item[(i)] 	For each $ w \in W_{\chi} $, choose a representative $ \dot{w} \in N(\ol{T})$. Then  these $ E_{\dot{w}} $ form a basis of $ \mathcal{H}(\ol{G},\rho_{\chi}) $.
			\item[(ii)] For $ a \in \Delta_{\chi} $, 
			$E_{\ol{w}_a}^2=\mathcal{I}_{\chi}+q^{-\frac{1}{2}}(q-1)E_{\ol{w}_a} .$
			\item[(iii)]For $ a \in \Delta_{\chi} $ and $ w \in W_{\chi}^0 $, 	$E_{\ol{w}_a}E_{\dot{w}}=E_{\ol{w}_a\dot{w}}$ if $ w^{-1}a>0 $.
			\item[(iv)] For $ v \in W_\chi $ and $ w \in \Omega_{\chi} $, $ E_{\dot{v}}E_{\dot{w}}=E_{\dot{v}\dot{w}} $.
			
		\end{itemize}
	\end{thm}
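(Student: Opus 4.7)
The plan is to derive the four statements from the Iwahori--Bruhat decomposition together with the three preparatory Lemmas~\ref{lm1}, \ref{lm2}, and \ref{lm3}, keeping careful track of the $\Lambda$-normalization built into $E_{\dot w}$.

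For (i), I would start with $\ol{G} = \bigsqcup_{w \in W_{\text{ex}}} \ol{I}\dot{w}\ol{I}$ and decompose any $f \in \mathcal{H}(\ol{G}, \rho_\chi)$ by support. The bi-equivariance applied to $t \in \ol{\mathbf{T}}(O_F)$ forces $\chi(t) = \chi(\dot{w}^{-1}t\dot{w})$ whenever $f$ is supported on $\ol{I}\dot{w}\ol{I}$, i.e.\ $w \in W_\chi$; for each such $w$ the admissible functions on that double coset form a one-dimensional space spanned by $\mathcal{I}_\chi T_{\dot w}\mathcal{I}_\chi$. Non-vanishing is checked directly via the Iwahori factorization of $\ol{I}$, and rescaling by $\Lambda(w)^{-1/2}$ yields the basis $\{E_{\dot w}\}_{w \in W_\chi}$. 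Part (ii) is then immediate from Lemma~\ref{lm3}: for $a = \alpha+k \in \Delta_\chi$, the element $\ol{w}_a = \ol{w}_\alpha(\varpi^k)$ is the prescribed representative of $s_a$, and that lemma delivers the quadratic identity verbatim.

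For (iii) and (iv) the mechanism is the same: I would express the convolution $E_{\dot v} * E_{\dot w}$ as a normalized version of $\mathcal{I}_\chi T_{\dot v} T_{\dot w}\mathcal{I}_\chi$ and invoke Lemma~\ref{lm1} or~\ref{lm2}. In (iii), since $s_a, w \in W_\chi^0 \subset W_\chi$ we have $(s_aw)\cdot\chi = \chi$, and the hypothesis $w^{-1}a > 0$ combined with~\eqref{length} gives $l(s_aw) = l(s_a) + l(w)$, hence $C_{s_a,w} = 1$; Lemma~\ref{lm1} then produces $\mathcal{I}_\chi T_{\ol{w}_a}T_{\dot{w}}\mathcal{I}_\chi = \mathcal{I}_\chi T_{\ol{w}_a\dot{w}}\mathcal{I}_\chi$, and the $\Lambda$-factors match. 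In (iv), the hypothesis $wA_{\chi,0} = A_{\chi,0}$ means $w$ preserves the positivity cone within $\Phi_{\chi,\text{af}}$, so $N(w^{-1}) \cap \Phi_{\chi,\text{af}} = \emptyset$ and hence $N(v) \cap N(w^{-1}) \cap \Phi_{\chi,\text{af}} = \emptyset$ for every $v \in W_\chi$; Lemma~\ref{lm2} then applies with $(vw)\cdot\chi = \chi$, and the resulting factor $C_{v,w} = \Lambda(v)^{1/2}\Lambda(w)^{1/2}\Lambda(vw)^{-1/2}$ is exactly what is needed to convert $E_{\dot v}E_{\dot w}$ into $E_{\dot v \dot w}$.

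The main technical care will be to verify that the convolution $E_{\dot v} * E_{\dot w}$, computed in $\mathcal{H}(\ol{G},\rho_\chi)$, reduces to the normalized $\mathcal{I}_\chi T_{\dot v} T_{\dot w}\mathcal{I}_\chi$ appearing in Lemmas~\ref{lm1} and~\ref{lm2}. This rests on the idempotency of $\mathcal{I}_\chi$ in $\mathcal{H}_{\bar\epsilon}(\ol{G}, I_1)$ together with the Iwahori factorization $\ol{I} = (\ol{I}\cap U^\prime)\cdot\ol{\mathbf{T}}(O_F)\cdot(\ol{I}\cap U)$ and the triviality of $\rho_\chi$ on the unipotent factors, which together allow the middle $\mathcal{I}_\chi$ to collapse without introducing extra scalars beyond those already absorbed into $C_{v,w}$.
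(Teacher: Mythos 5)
Your overall strategy matches the paper's: the paper's proof of Theorem~\ref{thm:main} simply refers to Roche's Proposition~4.1 for (i) and declares (ii)--(iv) to be direct consequences of Lemmas~\ref{lm1}, \ref{lm2}, \ref{lm3}. Your handling of (i), (ii), (iv) and the collapse of the middle $\mathcal{I}_\chi$ are all in order (for the latter, the cleanest version of your observation is that $\mathcal{I}_\chi T_{\dot v}=T_{\dot v}\mathcal{I}_{v^{-1}\chi}=T_{\dot v}\mathcal{I}_\chi$ for $v\in W_\chi$, so $\mathcal{I}_\chi T_{\dot v}\mathcal{I}_\chi T_{\dot w}\mathcal{I}_\chi=\mathcal{I}_\chi T_{\dot v}T_{\dot w}\mathcal{I}_\chi$).

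There is, however, a genuine slip in your treatment of (iii). You invoke Lemma~\ref{lm1} and equation~\eqref{length} for $a\in\Delta_\chi$, but both are stated only for $a\in\Delta_{\text{af}}$ (simple \emph{affine} roots of $W_{\text{af}}$, where $l(s_a)=1$). For $a\in\Delta_\chi\setminus\Delta_{\text{af}}$ one typically has $l(s_a)>1$, so \eqref{length} does not deliver $l(s_aw)=l(s_a)+l(w)$ and the claim $C_{s_a,w}=1$ is not justified by that route. The correct tool is Lemma~\ref{lm2} with $v=s_a$: since $a\in\Delta_\chi$ one has $N(s_a)\cap\Phi_{\chi,\text{af}}=\{a\}$ (as observed in the proof of Lemma~\ref{lm3}), and $w^{-1}a>0$ means $a\notin N(w^{-1})$, so the hypothesis $N(w^{-1})\cap N(s_a)\cap\Phi_{\chi,\text{af}}=\emptyset$ holds. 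Lemma~\ref{lm2} then yields $\mathcal{I}_\chi T_{\ol{w}_a}T_{\dot w}\mathcal{I}_\chi = C_{s_a,w}\,\mathcal{I}_\chi T_{\ol{w}_a\dot w}\mathcal{I}_\chi$, and the point is that you do \emph{not} need $C_{s_a,w}=1$: the normalization $\Lambda(s_a)^{-1/2}\Lambda(w)^{-1/2}C_{s_a,w}=\Lambda(s_aw)^{-1/2}$ is exactly what converts the right-hand side into $E_{\ol{w}_a\dot w}$. So the conclusion of (iii) stands, but it should be derived from Lemma~\ref{lm2} with the $C$-factor absorbed by the $\Lambda$-normalization, not from Lemma~\ref{lm1} and a (false in general) claim that $C_{s_a,w}=1$.
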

	\begin{proof}
		The proof of (i) is the same as \cite[Proposition 4.1]{Ro}. The assertions (ii)-(iv) are direct consequences of Lemma \ref{lm1}, Lemma \ref{lm2} and Lemma \ref{lm3}.
	\end{proof}

	\subsection{General case} Now we consider the general case in which $ \chi $ may not be depth zero. The construction of the types and the technique of induction essentially follow from \cite{Ro}. 
	
	To use the arguments in loc. cit., we restrict $ p $ as follows if $ \Phi $ is irreducible :
	\begin{itemize}
		\item[$ \bullet $] for type $ A_m $\quad$p>m+1 $
		\item[$ \bullet $] for types $ B_m ,C_m,D_m$\quad$p\neq2 $
		\item[$ \bullet $] for type $ F_4$\quad$p\neq2,3 $
		\item[$ \bullet $] for types $ G_2,E_6$\quad$p\neq2,3,5 $
		\item[$ \bullet $] for types $ E_7,E_8$\quad$p\neq2,3,5,7 $
	\end{itemize}
	If $ \Phi $ is not irreducible, we exclude the ``bad" primes appearing in each of its irreducible factors. 
	
	For $ \alpha \in \Phi $, we define $ c_{\alpha} $ to be the least integer $ k \geq 1 $ such that $ 1+\mfr{p}_F^k \subset \text{Ker}(\chi \circ \ol{h}_{\alpha}) $. We define a function $ f_{\chi}:\Phi \rightarrow \mathbf{Z} $ as follows:
	$$
	f_\chi(\alpha)=\left\{\begin{array}{ll}
		\lfloor c_\alpha/2\rfloor & \text{ for } \alpha \in \Phi^{+}\\
		\lfloor (c_{\alpha}+1)/2\rfloor & \text{ for } \alpha \in \Phi^{-}
	\end{array}\right.
	$$where $ \lfloor x \rfloor $ denotes the largest integer $ \leq x $. 
	
	Let 
	\begin{equation} \label{eq:construction}
		U_{\chi}=\langle \ol{e}_{\alpha}(\mfr{p}^{f_\chi(\alpha)}):\alpha \in \Phi \rangle \text{ and } \ol{J_{\chi}}=\langle \mathbf{\ol{T}}(O_F), U_\chi \rangle.
	\end{equation}
	By \cite[\S3]{Ro}, $$\ol{J_{\chi}}/U_{\chi} \simeq \mathbf{\ol{T}}(O_F)/(U_\chi \cap \mathbf{\ol{T}}(O_F)) $$
	and $ (U_\chi \cap \mathbf{\ol{T}}(O_F))\subset \text{Ker}(\chi) $, we can lift $ \chi $ to a genuine character $ \rho_{\chi} $ of $ \ol{J_{\chi}} $.
	
	First we review a lemma in \cite[\S5]{Ro}.
	
	Let $ \ol{P}_{u}=\ol{L}N_{u} \subset \ol{G}$ be a parabolic subgroup with Levi component $ \ol{L} $, where $ N_{u} $ is the unipotent part arising from the canonical splitting. Let $ N_l $ be the opposite of $ N_u $ relative to $ \ol{L} $. Let $ (J,\tau) $ be a pair consisting of a compact open subgroup $ J $ of $ \ol{G} $ and a smooth character $ \tau $ of $ J $.

	\begin{lm} \label{iso-Hecke}
		Assume that $ (J,\tau) $ satisfies the following conditions:
		\begin{itemize}
			\item[(i)]  $ J=(J \cap N_l)\cdot(J\cap \ol{L})\cdot (J \cap N_u), $
			and the groups $ J \cap N_l, J \cap N_u $ are both contained in the kernel of $ \rho $.
			\item[(ii)] The support of $ \mathcal{H}(\ol{G},\tau) $ is contained in $ J\ol{L}J $.
		\end{itemize}
		
		Let $ J_{\ol{L}}= J \cap \ol{L}$ and $ \tau_{\ol{L}}=\tau|_{J_{\ol{L}}} $. Suppose $ \Phi \in \mathcal{H}(\ol{G}, \tau) $ is supported on $ JxJ $ for $ x \in \ol{L} $. Define $$ t(\Phi)=\frac{\delta_{\ol{G}}(JxJ)^{1/2}}{\delta_{\ol{L}}(J_{\ol{L}}xJ_{\ol{L}})^{1/2}}\phi ,$$where $ \delta_{\ol{L}} $ is a fixed Haar measure of $ \ol{L} $ and $ \phi $ is the unique element supported on $ J_{\ol{L}}xJ_{\ol{L}} $ such that $ \phi(x)=\Phi(x) $. Then $ t:\mathcal{H}(\ol{G},\tau)\rightarrow \mathcal{H}(\ol{L},\tau_{\ol{L}}) $ is a $ \C $-algebra isomorphism.
	\end{lm}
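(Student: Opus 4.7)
The plan is to first establish that $t$ is a well-defined linear bijection on the natural basis, and then verify it respects convolution. By hypothesis (ii), the algebra $\mathcal{H}(\ol{G},\tau)$ admits a $\C$-basis indexed by double cosets $JxJ$ with $x \in \ol{L}$, each basis element being determined by its value at $x$. Using the Iwahori-type decomposition (i) together with $J \cap N_l, J \cap N_u \subset \ker\tau$, one verifies that $JxJ \cap \ol{L} = J_{\ol{L}}\, x\, J_{\ol{L}}$ for $x \in \ol{L}$: writing $j_1 x j_2 \in \ol{L}$ via (i) forces the unipotent factors to cancel, and uniqueness of the product decomposition then makes them trivial. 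Hence $\phi(x) = \Phi(x)$ determines a unique bi-$(J_{\ol{L}},\tau_{\ol{L}})$-equivariant $\phi$ supported on $J_{\ol{L}}\, x\, J_{\ol{L}}$. Bijectivity follows by running the construction in reverse: any $\phi \in \mathcal{H}(\ol{L},\tau_{\ol{L}})$ supported on $J_{\ol{L}}\, x\, J_{\ol{L}}$ extends to a $\Phi$ on $JxJ$ via $\Phi(j_1 x j_2) = \tau(j_1)\phi(x)\tau(j_2)$ (well-defined because $\tau$ is trivial on the unipotent parts of $J$), and extending by zero elsewhere lifts $\phi$.

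The core of the argument is the multiplicativity $t(\Phi_1 * \Phi_2) = t(\Phi_1) * t(\Phi_2)$. Fix $\Phi_i$ supported on $J x_i J$ with $x_i \in \ol{L}$, and evaluate $(\Phi_1 * \Phi_2)(y)$ at $y \in \ol{L}$ as an integral over $\ol{G}$. Parametrize coset representatives for $Jx_1 J/J$ via the decomposition $J = (J\cap N_l)(J\cap \ol{L})(J \cap N_u)$, splitting each representative as a product $n_l \cdot t \cdot n_u$ with $t \in J_{\ol{L}}\, x_1\, J_{\ol{L}}/J_{\ol{L}}$. Since $x_2 \in \ol{L}$ normalizes both $N_u$ and $N_l$ and $\tau$ is trivial on $J \cap N_u$ and $J\cap N_l$, the integrations over the unipotent factors separate off as elementary volumes, and what remains is precisely the convolution $(\phi_1 * \phi_2)(y)$ on $\ol{L}$, multiplied by a ratio of unipotent volumes. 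The normalization $\delta_{\ol{G}}(JxJ)^{1/2}/\delta_{\ol{L}}(J_{\ol{L}}\, x\, J_{\ol{L}})^{1/2}$ is chosen exactly so that these factors telescope correctly when one applies $t$ on both sides.

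The main obstacle is the volume bookkeeping in this last step: one must verify a compatibility identity relating $\delta_{\ol{G}}(J x_i J)$ to $\delta_{\ol{L}}(J_{\ol{L}}\, x_i\, J_{\ol{L}})$ times the measures of the unipotent fibers, and check that the resulting products line up correctly across all double cosets $JyJ$ appearing in the support of $\Phi_1 * \Phi_2$. This is the classical computation underlying \cite[Theorem 7.2]{BK} and its depth-positive refinement in \cite[\S4]{Ro}. Importantly, the passage from the linear to the covering setting introduces nothing new: because the splitting of $\ol{G}$ over $\mathbf{G}(O_F)$ is fixed and $\tau$ is a genuine character of $J$, every $\mu_n$-contribution to the convolutions cancels factor-by-factor, so the volume identities and the verification of multiplicativity transfer verbatim from the linear case.
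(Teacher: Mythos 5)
The paper's own ``proof'' is simply the citation ``See [Ro, \S 5]; the proof could be carried out word for word,'' and your proposal is essentially an unpacking of exactly that argument: the identification $JxJ \cap \ol{L} = J_{\ol{L}} x J_{\ol{L}}$ from the Iwahori factorization (i), the support-preserving bijection between double-coset functions, the volume normalization, and the observation that the unipotent integrations factor off in the convolution. Your closing remark that the $\mu_n$-contributions are handled uniformly because the splitting is fixed and $\tau$ is genuine is precisely the observation that justifies the paper's ``word for word'' claim, so you have matched the intended approach.

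One point you pass over quickly is the surjectivity/well-definedness step: extending $\phi$ supported on $J_{\ol{L}}xJ_{\ol{L}}$ to $\Phi$ on $JxJ$ requires more than that $\tau$ is trivial on the unipotent parts of $J$ --- one must check that the formula $\Phi(j_1 x j_2) = \tau(j_1)\phi(x)\tau(j_2)$ is independent of the factorization, which reduces (via the Iwahori decomposition of $J \cap xJx^{-1}$) to $x$ intertwining $\tau_{\ol L}$ and to $\tau$ killing the unipotent parts on both sides of $x$. This is exactly the content of Roche's Lemma 5.1/Theorem 5.2 and is not automatic from triviality on $J\cap N_u$ and $J\cap N_l$ alone; it is the place where hypothesis (i) does real work. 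A small bibliographic slip: the relevant source is [Ro, \S 5], not \S 4, and the closest BK antecedent is the intertwining/support analysis in [BK, \S 7--8] rather than Theorem 7.2 specifically. Neither affects the correctness of the argument.
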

	
	\begin{proof}
		See \cite[\S 5]{Ro}. The proof could be carried out word for word.
	\end{proof}
	
	\begin{thm} \label{thm:induction}
		There is Levi subgroup $ \ol{L} $ of $ \ol{G} $ containing $ \ol{T} $ and a character of $ \chi_1 $ of $ \ol{L} $ such that $ \chi \otimes \chi_1 $ is a depth zero genuine character of $ \ol{\mathbf{T}}(O_F) $ and $$ \mathcal{H}(\ol{G},\rho_{\chi}) \simeq  \mathcal{H}(\ol{L},\rho_{\chi\otimes \chi_1}^{\ol{L}}), $$where $ \rho_{\chi\otimes \chi_1}^{\ol{L}} $ is constructed with respect to $ \ol{L} $ and the depth zero character $ \chi \otimes \chi_1 $.
	\end{thm}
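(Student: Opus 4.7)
The plan is to follow Roche's induction strategy \cite[\S7--\S9]{Ro}, adapted to the covering setting, and apply Lemma \ref{iso-Hecke} to $(\ol{J_\chi}, \rho_\chi)$ with respect to a suitable Levi subgroup $\ol{L}$.

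First, I would construct $\ol{L}$ and $\chi_1$. For each root $\alpha \in \Phi$, the conductor $c_\alpha$ of $\chi \circ \ol{h}_\alpha$ measures the depth of $\chi$ along that direction, and I set $\Phi_{\chi,0} = \{\alpha \in \Phi : c_\alpha = 0\}$. Under the restrictions on $p$ in \S4.2, which are arranged precisely so that the ``bad'' primes for $\Phi$ are excluded, $\Phi_{\chi,0}$ is a closed subsystem of $\Phi$ in the sense of \cite{Ro}, and I take $\ol{L}$ to be the Levi of $\ol{G}$ generated by $\ol{T}$ and the root subgroups indexed by $\Phi_{\chi,0}$. Following \cite[\S8]{Ro}, I would then build a smooth character $\chi_1$ of $\ol{L}$ whose restriction to $\ol{\mathbf{T}}(O_F)$ cancels the deep part of $\chi$ in the directions transverse to $\Phi_{\chi,0}$, so that $\chi \otimes \chi_1$ is trivial on each $\ol{h}_\alpha(1+\mfr{p}_F)$ and hence depth zero on $\ol{\mathbf{T}}(O_F)$.

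Next I would verify the two hypotheses of Lemma \ref{iso-Hecke} for the parabolic $\ol{P}_u = \ol{L}N_u$. Hypothesis (i), the factorization $\ol{J_\chi} = (\ol{J_\chi} \cap N_l) \cdot (\ol{J_\chi} \cap \ol{L}) \cdot (\ol{J_\chi} \cap N_u)$ with $\rho_\chi$ trivial on the two unipotent factors, is immediate from the definitions in \eqref{eq:construction} together with an Iwahori-type reordering of the generators $\ol{e}_\alpha(\mfr{p}_F^{f_\chi(\alpha)})$, since $\rho_\chi$ factors through $\ol{J_\chi}/U_\chi$. Hypothesis (ii), that $\mathcal{H}(\ol{G}, \rho_\chi)$ is supported on $\ol{J_\chi}\ol{L}\ol{J_\chi}$, is the main technical step: it amounts to the intertwining computation in \cite[\S7]{Ro} showing that $\rho_\chi$ fails to intertwine with $w \cdot \rho_\chi$ whenever $w$ lies outside the Weyl group of $\ol{L}$. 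Once both hypotheses are in place, Lemma \ref{iso-Hecke} yields $\mathcal{H}(\ol{G}, \rho_\chi) \simeq \mathcal{H}(\ol{L}, \rho_\chi|_{\ol{J_\chi} \cap \ol{L}})$, and a direct comparison of constructions identifies the right-hand side with $\mathcal{H}(\ol{L}, \rho_{\chi \otimes \chi_1}^{\ol{L}})$, since $\ol{J_\chi} \cap \ol{L}$ coincides with the subgroup of $\ol{L}$ produced by applying \eqref{eq:construction} to the depth zero character $\chi \otimes \chi_1$, and the two characters differ only by the unramified twist $\chi_1$.

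The main obstacle is the support condition (ii). Its proof in \cite{Ro} proceeds root-by-root and relies on delicate calculations inside conjugates of root filtration subgroups; in the covering setting, each such computation picks up Hilbert symbol cocycles from \eqref{E1} and \eqref{E4}. The hard part will be verifying that every such cocycle $(a,b)_n$ appearing in the relevant intertwining integrals is trivial: one needs $a$ or $b$ to lie in $1+\mfr{p}_F^k$ for $k$ large enough relative to $n$, which is guaranteed by the assumption $p \nmid n$ together with the specific choice of the exponents $f_\chi(\alpha)$. Once this cocycle bookkeeping is carried out, Roche's vanishing argument transfers essentially verbatim to $\ol{G}$ and closes the proof.
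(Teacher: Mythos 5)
Your plan---reduce via Lemma \ref{iso-Hecke}---is consistent with the paper, but your route diverges from the actual proof in two ways, and the second is a genuine gap.

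The paper does not construct $\ol{L}$ and $\chi_1$ in one shot. It runs an induction on semisimple rank following the dichotomy of \cite[\S 4 and Theorem 6.3]{Ro}: at each stage either there exists a character $\chi_1$ of the ambient group with $\chi\chi_1$ of level zero (then one twists the Hecke algebra by $\chi_1^{-1}$, using $J_\chi = J_{\chi\chi_1}$ and $\rho_\chi\otimes\chi_1=\rho_{\chi\otimes\chi_1}$), or the support of $\mathcal{H}(\ol{G},\rho_\chi)$ lies in $\ol{J_\chi}\ol{L}'\ol{J_\chi}$ for some proper Levi $\ol{L}'$, in which case Lemma \ref{iso-Hecke} gives a support-preserving isomorphism and one recurses down to semisimple rank zero. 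The Levi $\ol{L}$ in the statement is the endpoint of that recursion, not a priori the Levi generated by the depth-zero roots; verifying hypothesis (ii) of Lemma \ref{iso-Hecke} directly for your $\Phi_{\chi,0}$-Levi in one step is a substantially stronger claim than what Roche establishes at any single stage, and you give no argument for why the stabilizer of $\chi$ in $W_{\text{ex}}$ would produce double coset representatives inside that particular Levi.

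More critically, you misidentify where the covering-specific difficulty lies. The paper does not push Hilbert symbol cocycles through Roche's intertwining integrals. Its key observation is that the support is controlled by a pro-$p$ subgroup $K \subset J_\chi$, and because $p \nmid n$ the splitting $s$ over pro-$p$ subgroups is \emph{unique}, whence $s(K\cap gKg^{-1}) = s(K)\cap \ol{g}\,s(K)\ol{g}^{-1}$ for any preimage $\ol{g}$ of $g$. This makes the intertwining criterion for $\rho_\chi|_{\ol{K}}$ literally identical to the linear case, so Roche's dichotomy transfers with no cocycle bookkeeping whatsoever. Your proposed route of tracking $(a,b)_n$ through Roche's \S 7 sets up exactly the computation the proof is designed to sidestep, and your stated reason those symbols vanish (``$a$ or $b$ lies in $1+\mfr{p}_F^k$ for $k$ large enough relative to $n$'') is both imprecise---for $p\nmid n$ every element of $1+\mfr{p}_F$ is already an $n$-th power, so $k=1$ suffices---and unestablished, since you do not show the arguments of the symbol actually land in $1+\mfr{p}_F$. (As a minor point, with the paper's convention $c_\alpha\geq 1$, the depth-zero set should be $\{\alpha: c_\alpha = 1\}$, not $c_\alpha = 0$.)
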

	
	\begin{proof}
		First we claim that either the support of $ \mathcal{H}(\ol{G},\rho_{\chi}) $ has $ \ol{J_{\chi}}$-double coset representatives contained in a proper Levi subgroup of $ \ol{G} $ containting $ \ol{T} $ or there exists a character $ \chi_1 $ of $ \ol{G} $ such that $ \chi_1(\zeta)=1 $ for $ \zeta \in \mu_n $ and $ \chi \chi_1 $ has level zero.

		View $ \chi $ as a character of $ \mathbf{T}(O_F)\subset G $ by the restriciton $ \chi|_{s(\mathbf{T}(O_F))} $ and $ \rho_{\chi} $ as a character of $ J_{\chi} \subset G $ by the restriction $ \rho_{\chi}|_{s(J_{\chi})}  $. It is shown in \cite{Ro} that there is a pro-$ p $ subgroup $ K \subset J_\chi$ such that either the support of $ \mathcal{H}(G,\rho_{\chi}|_{K}) $ has $ K$-double coset representatives contained in a proper Levi subgroup of $ G $ containting $ T $ or there exists a character $ \chi_1 $ of $ G $ such that $ \chi \chi_1 $ has level zero, for details see \cite[\S4 and Theorem 6.3]{Ro}. 
		
		The key observation in the setting of covering group is that for $ g \in G $ and one of its preimage $ \ol{g} \in \ol{G}$, $$ s(K \cap gKg^{-1})=s(K) \cap \ol{g}s(K)\ol{g}^{-1} $$since the splitting of pro-$ p $ subgroup is unique. Since $ \ol{g}\in \ol{G} $ lies in the support of $ \mathcal{H}(\ol{G},\rho_{\chi}|_{\ol{K}}) $ if and only if $$ \rho_{\chi}|_{\ol{K} \cap \ol{g}\ol{K}\ol{g}^{-1}}=(\ol{g}\cdot\rho_{\chi})|_{\ol{K} \cap \ol{g}\ol{K}\ol{g}^{-1}}, $$we see the claim just follows from the situation of linear algebraic groups.
		
		The following proof is the same as that given in \cite[Theorem 6.3]{Ro}. We repeat it for completeness. 
		
		On the one hand, 	if there exists a character $ \chi_1 $ of $ \ol{G} $ such that $ \chi_1(\zeta)=1 $ for $ \zeta \in \mu_n $ and $ \chi \chi_1 $ has level zero. Then the map $ f \mapsto f\chi_1^{-1}:\mathcal{H}(\ol{G},\rho_\chi) \rightarrow \mathcal{H}(\ol{G},\rho_\chi\otimes \chi_1) $ is a support-preserving isomorphism. In fact using the construction in \cite[Theorem 4.15]{Ro}, we see $ J_{\chi}=J_{\chi\chi_1} $ and $ \rho_\chi\otimes \chi_1= \rho_{\chi \otimes \chi_1}$.
		
		On the other hand, if the support of $ \mathcal{H}(\ol{G},\rho_\chi) $ is contained in $ \ol{J_\chi}\bar{L} \ol{J_\chi} $ for a proper Levi subgroup $ \ol{L} $. 	It is shown in \cite[\S3]{Ro} that $ (\ol{J_\chi},\rho_\chi) $ satisfies the conditions in Lemma \ref{iso-Hecke}. Then there exists a support-preserving $ t:\mathcal{H}(\ol{G},\rho_{\chi}) \rightarrow \mathcal{H}(\ol{L},\rho_{\chi}^{\ol{L}}) $ by Lemma \ref{iso-Hecke}. Since $ \ol{J_\chi}\cap \ol{L} $ is the corresponding $ \ol{J_\chi} $ for $ \ol{L} $ and the semisimple rank of $ \ol{L} $ is strictly smaller than that of $ \ol{G} $, we can reduce to the case of semisimple rank zero. 
		
		But clearly there exists a character $ \chi_1 $ of $ \ol{T} $ such that $ \chi_1(\zeta)=1 $ for $ \zeta \in \mu_n $ and $ \chi\chi_1 $ has depth zero. Thus by induction we can construct the required isomorphism.
	\end{proof}
	Let $ \dot{w} \in N(\ol{T})$ be a representative of  $ w \in W_{\chi} $. Denote by $ E_{\dot{w}} \in  \mathcal{H}(\ol{G},\rho_{\chi}) $ the preimage of $  E_{\dot{w}}^{\ol{L}} $ defined in \eqref{E_w} for $  \mathcal{H}(\ol{L},\rho_{\chi\otimes \chi_1}^{\ol{L}}) $.
	
	\subsection{Types for genuine principal series representations}
	Let $ \pi_\chi$ be a genuine representation of $ \ol{T} $ such that $ \pi_{\chi}|_{\ol{\mathbf{T}}(O_F)} $ contains $ \chi $. 
	
	\begin{thm} \label{type}
		The pair $ (\ol{J_{\chi}},\rho_{\chi}) $ is a $ [\ol{T},\pi_\chi]_{\ol{G}} $-type of $ \ol{G} $.
	\end{thm}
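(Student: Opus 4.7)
The plan is to deduce the theorem from Theorem \ref{thm:G-cover} by showing that $(\ol{J_\chi}, \rho_\chi)$ is a $\ol{G}$-cover of $(\ol{\mathbf{T}}(O_F), \chi)$, which is already known to be a $[\ol{T}, \pi_\chi]_{\ol{T}}$-type of $\ol{T}$ by Lemma \ref{lm:covering torus}. Concretely, I would verify the three conditions of Definition \ref{G-cover} for the pair $(\ol{J_\chi}, \rho_\chi)$ constructed in \eqref{eq:construction}. Conditions (i) and (ii) are essentially bookkeeping using the structure of $\ol{J_\chi}$. Indeed, the product $U_\chi = \prod_{\alpha \in \Phi^+} \ol{e}_\alpha(\mfr{p}^{f_\chi(\alpha)}) \cdot \prod_{\alpha \in \Phi^-} \ol{e}_\alpha(\mfr{p}^{f_\chi(\alpha)})$ together with $\ol{\mathbf{T}}(O_F)$ gives the desired Iwahori-type factorization with respect to any Levi decomposition $\ol{P} = \ol{L}N_u$ with $\ol{L}\supset \ol{T}$; the piece $\rho_\chi$ is trivial on the unipotent parts by construction since it is inflated from a character of $\ol{\mathbf{T}}(O_F)$. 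Likewise, $\ol{J_\chi}\cap\ol{T} = \ol{\mathbf{T}}(O_F)$ and $\rho_\chi|_{\ol{\mathbf{T}}(O_F)} = \chi$ hold by construction.

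The main obstacle is condition (iii): for each parabolic $\ol{P}$ with Levi component $\ol{T}$, produce a strongly positive $z_{\ol{P}}\in Z(\ol{T})$ and an invertible element of $\mca{H}(\ol{G}, \rho_\chi)$ supported on $\ol{J_\chi}\, z_{\ol{P}}\, \ol{J_\chi}$. By the commutator formula \eqref{E4}, $Z(\ol{T})$ contains $\mathbf{s}(y(F^\times))$ for every $y \in Y_{Q,n}$, so $Y_{Q,n}$ embeds into the translation part of $W_{\text{ex}}$ and lies in $W_\chi$ (since central elements act trivially on characters). Choose $y\in Y_{Q,n}$ lying deep in the dominant chamber with respect to $\ol{P}$, so that $z_{\ol{P}} := \mathbf{s}(y(\varpi^{-1}))$ satisfies the positivity conditions of Definition \ref{G-cover}(iii). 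The basis element $E_{\dot{z}_{\ol{P}}}$ from Theorem \ref{thm:main}(i) (transported to the general case via the isomorphism of Theorem \ref{thm:induction}) is supported on $\ol{J_\chi}\, z_{\ol{P}}\, \ol{J_\chi}$.

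For invertibility, I would exploit the Bernstein-style behaviour available from Theorem \ref{thm:main}: since $y$ is dominant, iterating the braid relation (iv) (extended to translations using that $y$ is a translation by a central element) together with clause (iii) yields $E_{\dot{z}_{\ol{P}}} \cdot E_{(\dot{z}_{\ol{P}})^{-1}} = \mathcal{I}_\chi$, up to an explicit power of $q$. More concretely, for dominant $y$ one has $l(y) + l(-y) = 2\langle 2\rho, y\rangle$ and the convolution $T_{\dot{z}}\, T_{\dot{z}^{-1}}$ reduces, via the braid relations of Proposition \ref{lm:br} and the central twist by $\mathbf{s}(y(\varpi^{-1}))$, to a scalar multiple of the identity on $\ol{J_\chi}$. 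This exhibits $E_{\dot{z}_{\ol{P}}}$ as a unit in $\mca{H}(\ol{G}, \rho_\chi)$.

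Once conditions (i)--(iii) are verified, Theorem \ref{thm:G-cover} applies with $\mfr{S}_{\ol{L}} = \{[\ol{T}, \pi_\chi]_{\ol{T}}\}$, giving that $(\ol{J_\chi}, \rho_\chi)$ is a $\mfr{S}_{\ol{G}}$-type; the one-element set $\mfr{S}_{\ol{G}}$ is precisely $\{[\ol{T}, \pi_\chi]_{\ol{G}}\}$ since twists by $\mca{X}(\ol{T})$ on the $\ol{T}$-side correspond exactly to the unramified twists that do not change the inertial class on the $\ol{G}$-side. The technical subtlety I anticipate is ensuring that the strongly positive central element $y$ can be chosen inside $Y_{Q,n}$ and sufficiently deep so that the positivity conditions of Definition \ref{G-cover}(iii) hold simultaneously for all parabolic subgroups containing $\ol{T}$, but this is elementary since $Y_{Q,n}$ has finite index in $Y$ and the dominant cone is open.
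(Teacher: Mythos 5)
Your overall plan coincides with the paper's: show that $(\ol{J_\chi}, \rho_\chi)$ is a $\ol{G}$-cover of $(\ol{\mathbf{T}}(O_F), \chi)$, invoke Lemma \ref{lm:covering torus} and Theorem \ref{thm:G-cover}, reduce the verification of condition (iii) to depth zero via Theorem \ref{thm:induction}, and treat conditions (i), (ii) as routine. You also correctly locate the strongly positive elements: $Y_{Q,n}$ gives central translations by \eqref{E4}, and these lie in $W_\chi$.

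However, your invertibility argument contains a genuine gap. You claim that for a dominant translation $z = z_{\ol{P}}$ one can iterate the braid relations to get $E_{\dot{z}}\cdot E_{\dot{z}^{-1}} = \mathcal{I}_\chi$ up to a power of $q$. This is false. The braid relation of Proposition \ref{lm:br} gives $T_g * T_{g'} = T_{gg'}$ \emph{only} when $l(g)+l(g')=l(gg')$; for a dominant translation of positive length one has $l(z) + l(z^{-1}) = 2l(z) > 0 = l(z z^{-1})$, so the lengths do not add. And the fact that $\mathbf{s}(y(\varpi^{-1}))$ is central in $\ol{T}$ does not make $T_{\dot z}$ central in the (noncommutative) Hecke algebra; already for the unramified Iwahori--Hecke algebra, $T_{z}T_{z^{-1}}$ is a $\Z[q]$-linear combination of many $T_w$ with $w<z$, not a scalar. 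Similarly, Theorem \ref{thm:main}(iv) applies when one of the factors lies in $\Omega_\chi$, which a strongly positive translation typically does not, since it does not stabilize the alcove $A_{\chi,0}$.

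The correct mechanism, and the one the paper uses, is factorization through $W_\chi = W_\chi^0 \rtimes \Omega_\chi$. Given $n \in \text{pr}^{-1}(W_\chi)$, write its image in $W_\chi$ as a reduced word $s_{a_1}\cdots s_{a_k} n'$ with $a_i \in \Delta_\chi$ and $n' \in \Omega_\chi$; then the (nonzero) element of $\mathcal{H}(\ol{G},\rho_\chi)$ supported on $\ol{I}n\ol{I}$ is a scalar multiple of $E_{\ol{w}_{a_1}}\cdots E_{\ol{w}_{a_k}} E_{n'}$ by Theorem \ref{thm:main}(iii)--(iv). Each $E_{\ol{w}_{a_i}}$ is invertible from the quadratic relation of Theorem \ref{thm:main}(ii), since $E_{\ol{w}_a}\bigl(E_{\ol{w}_a} - q^{-1/2}(q-1)\mathcal{I}_\chi\bigr) = \mathcal{I}_\chi$; and $E_{n'}$ is invertible because $n' \in \Omega_\chi$, so $E_{n'}E_{n'^{-1}} = E_{n'n'^{-1}}$ is a scalar multiple of $\mathcal{I}_\chi$. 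This gives invertibility of every element supported on $\ol{I}n\ol{I}$ with $n \in \text{pr}^{-1}(W_\chi)$, which in particular covers the strongly positive central elements required by Definition \ref{G-cover}(iii).
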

	\begin{proof}
		We need to verify that that $ (\ol{J_{\chi}},\rho_{\chi}) $ is a $ \ol{G} $-cover of $ (\ol{\mathbf{T}}(O_F),\chi) $. The conditions (i) and (ii) in Definition \ref{Def:cover} can be easily checked, for details see \cite[Proposition 3.6]{Ro}.
		
		By Theorem \ref{thm:induction}, it suffices to check the condition (iii) in Definition \ref{Def:cover} for depth zero case. Let $ n \in \text{pr}^{-1}(W_{\chi}) $, then the non-zero element supported on $ InI $ can be written as a scale of $ E_{\ol{w}_{a_1}}\cdot\cdot\cdot E_{\ol{w}_{a_k}}E_{n^{\prime}} $ for $ a_1,...,a_k \in \Delta_{\chi} $ and $ n^{\prime} \in \text{pr}^{-1}(\Omega_{\chi})$. Since by Theorem \ref{thm:main} each $ E_{\ol{w}_{a_i}} $ is invertible and $ E_{n^{\prime}} $ is invertible, we see the non-zero element supported on $ InI $ is invertible.
		
		Thus by Theorem \ref{thm:G-cover} and Lemma \ref{lm:covering torus}, the pair $ (\ol{J_{\chi}},\rho_{\chi}) $ is a $ [\ol{T},\pi_\chi]_{\ol{G}} $-type of $ \ol{G} $.
	\end{proof}

	\section{Local Shimura correspondence}
	In this section, we discuss the relation between the Hecke algebra $ \mathcal{H}(\ol{G},\rho_{\chi}) $ and a Hecke algebra of the linear algebraic group $ G_{Q,n} $.
	
	\subsection{A character extension} Let $ W_{\chi}^{\diamondsuit}=\langle s_a:a \in \Phi^{\diamondsuit}_{\chi} \rangle$ and $W_{\chi,\text{af}}^{\diamondsuit}=\langle s_a:a \in \Phi^{\diamondsuit}_{\chi,\text{af}} \rangle $. By Proposition \ref{prop:shift}, there exists $ v \in V $ such that  $ \mathsf{t}_v(\Phi_{\chi,\text{af}})= \Phi^{\diamondsuit}_{\chi,\text{af}}$. Let $ A^{\diamondsuit}_{\chi,0} $ be the connected component of $ V- \cup_{a \in  \Phi^{\diamondsuit}_{\chi,\text{af}}}H_a$ containing $ A_0 $. Since $ W_{\chi,\text{af}}^{\diamondsuit} $ acts transitively on the components of $ V- \cup_{a \in  \Phi^{\diamondsuit}_{\chi,\text{af}}}H_a $, we can take $ w_0 \in W_{\chi,\text{af}}^{\diamondsuit} $ such that $ w_0\mathsf{t}_v(A_{\chi,0})=A^{\diamondsuit}_{\chi,0} $.
	
	\begin{eg}
		Let $ \Delta=\{\alpha,\beta\} $ be a fixed set of simple roots of $G= SL_3(\Q_p) $ with $ p \neq 2 $. Let $ \ol{G}=\ol{SL}^{(2)}_3(\Q_p) $ be the extension of $ SL_3(\Q_p) $ by quadratic form $ Q(\alpha^{\vee})=Q(\beta^{\vee})=1 $. 
		
		Let $ \chi:\ol{\mathbf{T}}(O_F)\rightarrow \C^{\times} $ be given by $ \chi\circ\ol{h}_\alpha|_{O_F^{\times}}=\chi\circ\ol{h}_\beta|_{O_F^{\times}}=\epsilon((\varpi,-)_2) $. Then as shown in Figure \ref{Figure1}, $ A_{\chi,0}$ is the triangle surrounded by thick yellow lines and $ A_{\chi,0}^{\diamondsuit}  $ is the triangle surrounded by thick green lines. We have $$\mathsf{t}_{-\alpha^{\vee}-\beta^{\vee}}(\Phi_{\chi,\text{af}})= \Phi^{\diamondsuit}_{\chi,\text{af}} \text{ and } s_{\alpha+\beta}\mathsf{t}_{-\alpha^{\vee}-\beta^{\vee}}(A_{\chi,0})=A^{\diamondsuit}_{\chi,0} .$$
		\begin{figure}[htbp]
			\includegraphics[scale=0.5]{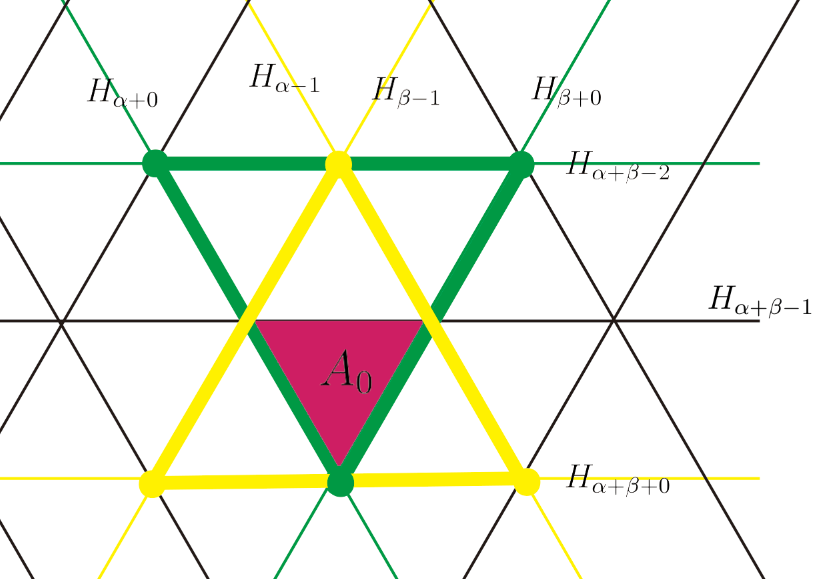}
			\caption{Affine apartment of $ SL_3$. The red part is the fundamental alcove $ A_0 $. The part surrounded by thick yellow lines is $ A_{\chi,0}$ and the part surrounded by thick green lines is $ A_{\chi,0}^{\diamondsuit}  $.}
			\label{Figure1}
		\end{figure}
	\end{eg}
	
	Let $ W_{\chi,\text{ex}}:=\mathsf{t}_v^{-1}(W_{\chi}^{\diamondsuit}\ltimes Y_{Q,n})\mathsf{t}_v=(\mathsf{t}_v^{-1}W_{\chi}^{\diamondsuit}\mathsf{t}_v)\ltimes Y_{Q,n}$. Since $$W^0_{\chi}=\mathsf{t}_v^{-1}W_{\chi,\text{af}}^{\diamondsuit}\mathsf{t}_v  \text{ and }\mathsf{t}_v^{-1}W_{\chi}^{\diamondsuit}\mathsf{t}_v \subset W^0_{\chi}  , $$we see $ W_{\chi,\text{ex}} $ is a subgroup of $ W_{\chi} $ containing $ W^{0}_{\chi} $.
	
	\begin{lm}
		The character $ \chi:\ol{\mathbf{T}}(O_F) \rightarrow \C^{\times}$ has an extension $$ \wt{\chi}:\text{\rm pr}^{-1}(W_{\chi,\text{\rm ex}}) \rightarrow \C^{\times} $$such that $  \wt{\chi}(\ol{w}_a)=1 $ for $ a \in \Delta_{\chi} $.
	\end{lm}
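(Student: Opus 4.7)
The plan is to construct $\wt{\chi}$ by specifying its values on a generating set of $\text{pr}^{-1}(W_{\chi,\text{ex}})$ above $\ol{\mathbf{T}}(O_F)$ and then verifying the defining relations. The group fits into
\[
1 \longrightarrow \ol{\mathbf{T}}(O_F) \longrightarrow \text{pr}^{-1}(W_{\chi,\text{ex}}) \longrightarrow W_{\chi,\text{ex}} \longrightarrow 1,
\]
and the inclusion $W_{\chi,\text{ex}} \subset W_{\chi}$ guarantees that conjugation of $\ol{\mathbf{T}}(O_F)$ by any lift preserves $\chi$, giving the basic compatibility required. As generators I would take $\ol{\mathbf{T}}(O_F)$ together with (i) the lifts $\ol{w}_a$ for $a \in \Delta_{\chi}$, which together with the torus generate $\text{pr}^{-1}(W^0_{\chi})$, and (ii) the lifts $\s(y(\varpi^{-1}))$ for $y$ running over a $\Z$-basis of $Y_{Q,n}$, needed because $W_{\chi,\text{ex}}/W^0_{\chi}$ is a translation quotient $Y_{Q,n}/Q^\vee_{\chi,Q,n}$. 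I prescribe $\wt{\chi}|_{\ol{\mathbf{T}}(O_F)} = \chi$ and $\wt{\chi}(\ol{w}_a) = 1$ for $a \in \Delta_{\chi}$, leaving the values on the $Y_{Q,n}$-lifts as free parameters to be fixed below for consistency.

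Among the relations to verify, two are essentially automatic. The commutation of $Y_{Q,n}$-lifts follows from \eqref{E4}:
\[
[\s(y_1(\varpi^{-1})), \s(y_2(\varpi^{-1}))] = (\varpi^{-1},\varpi^{-1})_n^{B_Q(y_1,y_2)} = 1,
\]
since $B_Q(y_1,y_2) \in n\Z$ by definition of $Y_{Q,n}$. The semidirect-product relations $\ol{w}_a \s(y(\varpi^{-1})) \ol{w}_a^{-1}$ are computed via \eqref{E3}, which produces the expected translation $\s((s_ay)(\varpi^{-1}))$ up to an explicit factor lying in $\ol{h}_\alpha(O_F^\times)$; the $\chi$-value of this factor is controlled by the hypothesis $\chi_a = \mathbb{1}$, and any remaining slack is absorbed by a compatible choice of the free parameters on the $Y_{Q,n}$-lifts.

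The hard part is the Coxeter relations, and in particular the quadratic relation $\chi(\ol{w}_a^2) = 1$ for $a \in \Delta_{\chi}$. For $a = \alpha + k$, equation \eqref{E5} yields
\[
\ol{w}_a^2 = (-\varpi^k,-\varpi^k)_n^{Q(\alpha^\vee)}\,\ol{h}_\alpha(-1).
\]
Evaluating $\chi_a(-1) = 1$ forces $\chi(\ol{h}_\alpha(-1)) = \epsilon\bigl((\varpi,-1)_n^{-kQ(\alpha^\vee)}\bigr)$. Using the Hilbert-symbol identities $(x,x)_n = (x,-1)_n^{-1}$, bilinearity, and $(\varpi,-1)_n^2 = 1$, the product telescopes to $\chi(\ol{w}_a^2) = \epsilon\bigl((-1,-1)_n^{-Q(\alpha^\vee)}\bigr)$. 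Since $p \nmid n$ and $\mu_n \subset F$ imply $n \mid q - 1$, the tame-symbol formula gives $(-1,-1)_n = 1$, so $\chi(\ol{w}_a^2) = 1$. The braid relations for $\ol{w}_a, \ol{w}_b$ with $a,b \in \Delta_{\chi}$ are then handled by reducing to a rank-two sub-root system in $\Phi_{\chi,\text{af}}$ and running the analogous calculation there, the cocycle corrections again lying in $\ker\chi$ because both simple affine roots satisfy $\chi_a = \chi_b = \mathbb{1}$. Once all relations are verified, the prescription extends uniquely to a character $\wt{\chi}$ on $\text{pr}^{-1}(W_{\chi,\text{ex}})$ with $\wt{\chi}(\ol{w}_a) = 1$ for $a \in \Delta_\chi$.
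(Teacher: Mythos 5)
Your proposal takes a genuinely different route from the paper. The paper proceeds in three stages: it first produces a genuine character $\wt{\chi}_1$ of $\text{pr}^{-1}(W_{\chi,\text{ex}})$ by passing to an auxiliary Brylinski--Deligne cover $\ol{L}_\chi$ whose $\Phi_\chi^\diamondsuit$-Weyl group sits inside a hyperspecial subgroup, then corrects by a non-genuine character $\wt{\chi}_2$ constructed via the Howlett--Lehrer extension theorem applied to $\chi\wt{\chi}_1^{-1}$ (which is trivial on $\mu_n$ and hence reduces to the linear case), and finally adjusts by a sign character $\chi_3$ of $W_\chi$ so that $\wt{\chi}(\ol{w}_a)=1$. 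You instead propose a direct generators-and-relations construction. Both approaches could in principle succeed, but the paper's decomposition has the advantage of converting the extension problem into ones with known solutions, precisely because a direct verification of all relations in $\text{pr}^{-1}(W_{\chi,\text{ex}})$ is intricate.

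The proposal has a real gap. Extending $\chi$ from $\ol{\mathbf{T}}(O_F)$ to $\text{pr}^{-1}(W_{\chi,\text{ex}})$ is governed by a cohomological obstruction, and the phrase ``any remaining slack is absorbed by a compatible choice of the free parameters on the $Y_{Q,n}$-lifts'' is an assertion without justification: the conjugation relations $\ol{w}_a\,\s(y(\varpi^{-1}))\,\ol{w}_a^{-1}$ impose constraints linking the values of $\wt{\chi}$ on lifts of $y$ and of $s_a y$ (and of longer orbits), and global consistency of this system is exactly the nontrivial content that Howlett--Lehrer supplies in the linear case. Likewise, the affine braid relations between $\ol{w}_a$ and $\ol{w}_b$ for $a,b\in\Delta_\chi$ carry Steinberg-cocycle corrections that do not obviously lie in $\ker\chi$ — compare the sign bookkeeping with $\ol{w}_\alpha(\pm\varpi^{k})$ that the paper has to handle explicitly in the proof of Lemma~\ref{lm3} and again in the well-definedness check for $\chi_3$. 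Claiming these corrections vanish ``because $\chi_a=\chi_b=\mathbb{1}$'' needs an actual argument. Your verification of the quadratic relation via \eqref{E5} and the tame-symbol identities $(x,x)_n=(x,-1)_n^{-1}$, $(-1,-1)_n=1$, $(\varpi,-1)_n^2=1$ is correct, as is the commutation of $Y_{Q,n}$-lifts via \eqref{E4} and the defining condition $B_Q(y_1,y_2)\in n\Z$. But without a rigorous treatment of the conjugation and braid relations — which is where the hard cocycle content sits — the proposal does not establish the lemma.
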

	
	\begin{proof}
		First, we show there exists a genuine character $ \wt{\chi}_1 $ of $ \text{pr}^{-1}(W_{\chi,\text{ex}}) $.
		
		Consider one covering group $ \ol{L}_{\chi} $ with root datum $ (X, \Phi_{\chi}^{\diamondsuit};Y,(\Phi_{\chi}^{\diamondsuit })^{\vee}) $ and the same Brylinski-Deligne pair $ (D, \eta)$ as $ \ol{G} $. Then $ \text{pr}^{-1}(W_{\chi,\text{ex}})  $ in $ \ol{L}_{\chi} $ is isomorphic to that in $ \ol{G} $ and clearly $ \text{pr}^{-1}((w_0\mathsf{t}_v)^{-1}W_{\chi}^{\diamondsuit}(w_0\mathsf{t}_v)) $ lies in a hyperspecial subgroup $ \ol{K} $ of  $ \ol{L}_{\chi} $.
		
		Let $ s^{\prime} $ be a splitting over $ K $. By \cite{GG}, there is a $ (w_0\mathsf{t}_v)^{-1}W_{\chi}^{\diamondsuit}(w_0\mathsf{t}_v) $-invariant genuine character $ \chi_1 :Z(\ol{T}) \rightarrow \C^{\times} $, which is trivial on the intersection of $ s^{\prime}(\mathbf{T}(O_F)) $. We can extend $ \chi_1 $ to get $ \wt{\chi}_1:\text{pr}^{-1}(W_{\chi,\text{ex}}) \rightarrow \C^{\times} $ such that $ \wt{\chi}_1(k)=1 $ for $ k \in \text{pr}^{-1}(W_{\chi,\text{ex}})\cap s^{\prime}(K) $.
		
		Then consider $ \chi_2=\chi\wt{\chi}^{-1}_1|_{\ol{\mathbf{T}}(O_F)} $, which is preserved by elements in $ W_{\chi,\text{ex}} $. Since $ \chi_2 $ is trivial on $ \mu_n $, this is essentially equivalent to the linear case. By \cite{HL} we can extend it to a character $ \wt{\chi}_2:\text{pr}^{-1}(W_{\chi,\text{ex}}) \rightarrow \C^{\times} $.
		
		Notice now $ \wt{\chi}_1\cdot\wt{\chi}_2 $ is an extension of $ \chi $ to $ \text{pr}^{-1}(W_{\chi,\text{ex}}) $. Then for $ a=\alpha+k \in \Delta_{\chi} $  $$( \wt{\chi}_1\cdot\wt{\chi}_2)(\ol{w}^2_a)=(\wt{\chi}_1\cdot\wt{\chi}_2)((-1,\varpi)^{kQ(\alpha^{\vee})}_n\ol{h}_{\alpha}(-1))=1 \text{ and }( \wt{\chi}_1\cdot\wt{\chi}_2)(\ol{w}_a)=\pm1.$$Now we define a character $ \chi_3:W_{\chi}\rightarrow \C^{\times} $ given by 	$$ \chi_3(w)=1 \text{ for }  w \in \Omega_{\chi} \text{ and } \chi_3(s_a)=( \wt{\chi}_1\cdot\wt{\chi}_2)(\ol{w}_{a})\text{ for }  a \in \Delta_{\chi}.$$We observe that if $ (s_{a_1}...s_{a_k})s_a (s_{a_1}...s_{a_k})^{-1}=s_b $, where $ a,b,a_1,...,a_k \in \Delta_{\chi} $, then by the argument in the proof of Lemma \ref{lm3} $$ (\wt{\chi}_1\cdot\wt{\chi}_2) ((\ol{w}_{a_1}...\ol{w}_{a_k})\ol{w}_a(\ol{w}_{a_1}...\ol{w}_{a_k})^{-1})=(\wt{\chi}_1\cdot\wt{\chi}_2)(\ol{w}_b).$$Thus $ \chi_3 $ is a well-defined character by the generating relations of $ W_{\chi} $.
		
		We lift $ \chi_3 $ to a character $ \wt{\chi}_3:\text{pr}^{-1}(W_{\chi})\rightarrow \C^{\times} $ and define $$ \wt{\chi}(n)=\wt{\chi}_1(n)\wt{\chi}_2(n)\wt{\chi}_3(n) \text{ for } n \in \text{pr}^{-1}(W_{\chi,\text{ex}}).$$Then the character $ \wt{\chi} $ of $ \text{pr}^{-1}(W_{\chi,\text{ex}}) $ is exactly an eligible extension.
	\end{proof}

	\subsection{Local Shimura correspondence}
	Let $$ \mathcal{H}(W_{\chi,\text{ex}}):=\{f \in \mathcal{H}(\ol{G},\rho_{\chi}):\text{supp}(f)\subset \ol{J_{\chi}}\text{pr}^{-1}(W_{\chi,ex})\ol{J_{\chi}}\}. $$For $ w \in W_{\chi,\text{ex}} $ and a representative $ \dot{w} \in N(\ol{T}) $ of $ w $, define \begin{equation} \label{def:ew}
		e_w:=\wt{\chi}^{-1}(\dot{w})E_{\dot{w}}
	\end{equation}
	which is indepedent of the choice of $ \dot{w} $. Using this basis, it is clear that $ \mathcal{H}(W_{\chi,\text{ex}}) $ is an affine Hecke algebra with parameter $ q $ by Theorem \ref{thm:main} and \ref{thm:induction}.

	Recall that the root datum of $ G_{Q,n} $ is given by $$ ( X_{Q,n}, \Phi_{Q,n}, \Delta_{Q,n};\ Y_{Q,n},\Phi_{Q,n}^{\vee},\Delta_{Q,n}^{\vee}).  $$Let $ T_{Q,n} $ be the maximal torus of $ G_{Q,n} $ with respect to $ Y_{Q,n} $. Let $ \mathbf{T}_{Q,n}(O_F) $ be the maximal compact subgroup in $T_{Q,n} $. There is a homomorphism $$ h: \mathbf{T}_{Q,n}(O_F) \rightarrow Z(\ol{T}) \cap s(\mathbf{T}(O_F)) $$given by $ h(y(a))=s(y(a)) $ for $ y \in Y_{Q,n} $ and $ a \in O_{F}^{\times} $. Then we can lift the character $ \chi:\ol{T}(O_F) \rightarrow \C^{\times} $ to a character $ \chi_{Q,n} $ of $ \mathbf{T}_{Q,n}(O_F) $. 
	
	Since linear algebraic groups are special cases of covering groups with $ n=1 $, we can transfer the previous notations for $ G_{Q,n} $ with respect to $ \chi_{Q,n} $.
	
	Let $ \Phi_{\chi_{Q,n}}=\{\alpha \in \Phi_{Q,n}:\chi_{Q,n}\circ \alpha^{\vee}|_{O_F^{\times}}=\mathbb{1}\} $. Then $ \Phi_{\chi_{Q,n},\text{af}}=\{\alpha+k:\alpha \in \Phi_{\chi_{Q,n}},k\in\Z\} $. 
	Let $ A_{\chi_{Q,n},0} $ be the connected component of $ V-\cup_{a \in \Phi_{\chi_{Q,n},\text{af}}}H_a $ determined by $ \Phi_{\chi_{Q,n}}\cap \Phi_{Q,n}^{+} $. It can be easily observed that $ \cup_{a \in \Phi_{\chi_{Q,n},\text{af}}}H_a= \cup_{a \in  \Phi^{\diamondsuit}_{\chi,\text{af}}}H_a $ and $  A_{\chi_{Q,n},0} =A_{\chi,0}^{\diamondsuit} $. Thus there is a natural isomorphism
	$$ \Psi_1: W_{\chi_{Q,n},\text{ex}} \rightarrow  W_{\chi}^{\diamondsuit}\ltimes Y_{Q,n}$$determined by $ \Psi_1(s_{\alpha/n_{\alpha}})=s_{\alpha} $ for $\alpha/n_{\alpha} \in  \Phi_{\chi_{Q,n}} $ and $ \Psi_1(y)=y $ for $ y \in Y_{Q,n} $. 
	
	Then we obtain an isomorphism $$ \Psi: W_{\chi_{Q,n},\text{ex}} \rightarrow W_{\chi,\text{ex}} $$given by $ \Psi(w)=(w_0\mathsf{t}_v)^{-1}(\Psi_1(w))(w_0\mathsf{t}_v) $.
	
	We write $ e^{Q,n}_w $ for the elements defined in \eqref{def:ew} with respect to $\mathcal{H}(W_{\chi_{Q,n},\text{ex}})  $.

	\begin{thm} \label{LSC}
		There is a $ \C $-algebra isomorphism $$  \Upsilon:\mathcal{H}(W_{\chi_{Q,n},\text{\rm ex}}) \rightarrow \mathcal{H}(W_{\chi,\text{\rm ex}}) $$given by $  \Upsilon(e^{Q,n}_w)=e_{\Psi(w)} $.
	\end{thm}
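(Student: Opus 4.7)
The plan is to check that $\Upsilon$, defined on the basis of $\mathcal{H}(W_{\chi_{Q,n},\text{ex}})$ by $e^{Q,n}_w \mapsto e_{\Psi(w)}$, is a ring homomorphism by verifying that it respects the defining relations of an affine Hecke algebra with parameter $q$. Because both $\{e^{Q,n}_w\}$ and $\{e_{\Psi(w)}\}$ are $\C$-bases by Theorem \ref{thm:main} and Theorem \ref{thm:induction}, the prescription extends uniquely to a $\C$-linear bijection; multiplicativity then needs to be checked only on the generators indexed by simple reflections and length-zero elements.

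First I would unpack the Coxeter-system compatibility of $\Psi$. By construction, $\Psi_1$ identifies $\Phi_{\chi_{Q,n},\text{af}}$ with $\Phi^{\diamondsuit}_{\chi,\text{af}}$ via $\alpha/n_\alpha \leftrightarrow \alpha$ together with the identity on $Y_{Q,n}$, so it sends the alcove $A_{\chi_{Q,n},0}$ to $A^{\diamondsuit}_{\chi,0}$. Conjugation by $(w_0 \mathsf{t}_v)^{-1}$ then carries $A^{\diamondsuit}_{\chi,0}$ back to $A_{\chi,0}$ as in Proposition \ref{prop:shift}. Thus $\Psi$ gives a length-preserving isomorphism of Coxeter systems, carrying the simple reflections $\{s_a : a \in \Delta_{\chi_{Q,n}}\}$ bijectively onto a corresponding set of simple reflections in $W_{\chi,\text{ex}}$ associated with $\Delta_\chi$, and restricting to a bijection on the respective length-zero subgroups.

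Next I would verify the quadratic, braid, and length-zero relations. For a simple reflection $s_a$ on the $G_{Q,n}$-side, Theorem \ref{thm:main}(ii) (specialized to the linear case $n=1$) gives $(e^{Q,n}_{s_a})^2 = \mathcal{I} + q^{-\frac{1}{2}}(q-1) e^{Q,n}_{s_a}$. Writing $\Psi(s_a) = s_b$ for some $b \in \Delta_\chi$, Lemma \ref{lm3} yields $E_{\ol{w}_b}^2 = \mathcal{I}_\chi + q^{-\frac{1}{2}}(q-1) E_{\ol{w}_b}$; the crucial input is the preceding lemma, which produces an extension $\wt{\chi}$ with $\wt{\chi}(\ol{w}_b) = 1$, so $e_{s_b} = E_{\ol{w}_b}$ and the quadratic relation transports verbatim. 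The braid relations $e_{w_1} e_{w_2} = e_{w_1 w_2}$ whenever $l(w_1 w_2) = l(w_1) + l(w_2)$ follow on each side from Lemma \ref{lm2}; since $\Psi$ preserves lengths and $\wt{\chi}$ is a character, $\Upsilon$ preserves these relations. The length-zero compatibilities from Theorem \ref{thm:main}(iv) similarly transport directly, because the $\Omega$-type factor on one side corresponds under $\Psi$ to the analogous factor on the other.

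The main obstacle is the scalar bookkeeping under the normalization $e_w = \wt{\chi}^{-1}(\dot{w}) E_{\dot{w}}$: without the special property $\wt{\chi}(\ol{w}_b) = 1$ for $b \in \Delta_\chi$ supplied by the preceding lemma, the quadratic relation on the $\ol{G}$-side would acquire an extraneous factor $\wt{\chi}(\ol{w}_b)^2$ and fail to match the $G_{Q,n}$-side exactly; the careful construction of $\wt{\chi}$ is precisely what forces the two affine Hecke algebras to be isomorphic rather than merely twisted equivalent. Once the generator relations are verified, $\Upsilon$ is a $\C$-algebra homomorphism, and being a bijection on bases it is an isomorphism.
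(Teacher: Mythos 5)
Your proposal is correct and takes essentially the same route as the paper: both rely on $\Psi$ being a (length-preserving) isomorphism compatible with the decompositions $W_{\chi,\text{ex}} = W^0_\chi \rtimes (W_{\chi,\text{ex}} \cap \Omega_\chi)$ and $W_{\chi_{Q,n},\text{ex}} = W^0_{\chi_{Q,n}} \rtimes (W_{\chi_{Q,n},\text{ex}} \cap \Omega_{\chi_{Q,n}})$, and then invoke the presentation of both algebras from Theorems \ref{thm:main} and \ref{thm:induction}. The paper compresses this to a single "it is obvious"; you spell out the generator-and-relation check (quadratic, braid, and $\Omega$-part relations), and in particular you correctly isolate the role of the normalization $\wt{\chi}(\ol{w}_a)=1$ for $a\in\Delta_\chi$ from the preceding lemma, without which the quadratic relation on the $\ol{G}$ side would be twisted by a sign and fail to match the $G_{Q,n}$ side literally.
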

	
	\begin{proof}
		Since $ w_0\mathsf{t}_v(A_{\chi,0})=A_{\chi_{Q,n},0} $, the isomorphism $ \Psi $ induces a bijection between $S_{\chi_{Q,n}}^0 $ and $ S_{\chi}^0 $. At the same time, it induces an isomorphism between $ W_{\chi_{Q,n},\text{ex}} \cap \Omega_{\chi_{Q,n}} $ and $ W_{\chi,\text{ex}}\cap \Omega_{\chi} $. Since $$W_{\chi_{Q,n},\text{ex}}=W^0_{\chi_{Q,n}}\rtimes ( W_{\chi_{Q,n},\text{ex}} \cap \Omega_{\chi_{Q,n}}) \text{ and }W_{\chi,\text{ex}}=W^0_{\chi}\rtimes(W_{\chi,\text{ex}}\cap \Omega_{\chi}) ,$$it is obvious that $ \Upsilon $ is an isomorphism by Theorem \ref{thm:main} and \ref{thm:induction}.
	\end{proof}
	
	\begin{rmk}
		If $ W_{\chi_{Q,n}}=W_{\chi_{Q,n},\text{ex}} $, this implies the stabilizer of $ \chi|_{Z(\ol{T})\cap \ol{\mathbf{T}}(O_F)} $ in $ W_{\text{ex}} $ is equal to $ W_{\chi}^{\diamondsuit} \ltimes Y $ and thus $ W_{\chi}=W_{\chi,\text{ex}} $. Then $ \mathcal{H}(\ol{G},\rho_{\chi})=\mathcal{H}(W_{\chi,\text{ex}}) $ is isomorphic to $ \mathcal{H}(G_{Q,n},\rho_{\chi_{Q,n}})=\mathcal{H}(W_{\chi_{Q,n},\text{ex}}) $ by Theorem \ref{LSC}.
		
	\end{rmk}
	\begin{eg}
	However, in general the structure of $\mathcal{H}(\ol{G},\rho_{\chi})$ may be different from $ \mathcal{H}(G_{Q,n},\rho_{\chi_{Q,n}})$. Consider $ G=GL_2 $ with cocharacter lattice $ Y=\Z e_1 \oplus \Z e_2 $ and the set of simple coroot $ \Delta^{\vee}=\{\alpha^{\vee}:=e_1-e_2\} $. Take 
	the Weyl-invariant bilinear form $ B_Q $ determined by $$ B_{Q}(e_1,e_1)=B_{Q}(e_2,e_2)=2, B_Q(e_1,e_2)=-2. $$Let $ \ol{GL}^{(4)}_2(\Q_5) $ be a 4-fold covering group of $ GL_2 $ associated with the bilinear form. Take the character $ \chi $ such that $ \chi \circ \ol{h}_{\alpha}|_{O^{\times}_F} $ has order 2. By computation, we obtain $$ W_{\chi}=Y_{Q,n} \cup (s_{\alpha}e_1Y_{Q,n}) \text{ and } W_{\chi_{Q,n}}=\langle s_{\alpha} \rangle \ltimes Y_{Q,n},$$where $ Y_{Q,n}=\Z(e_1-e_2)\oplus\Z(e_1+e_2) $. Then $ W_{\chi} $ is free and it is not isomorphic to $ W_{\chi_{Q,n}} $. In this case, $ \Omega_{\chi}=W_{\chi} $ and $ \Omega_{\chi_{Q,n}}=W_{\chi_{Q,n}} $. By Theorem \ref{thm:main}, it is clear that there is an element of order $ 2 $ in $ \mathcal{H}(G_{Q,n},\rho_{\chi_{Q,n}}) $ but no such element in $\mathcal{H}(\ol{G},\rho_{\chi})$. Thus $ \mathcal{H}(\ol{G},\rho_{\chi}) $ and $ \mathcal{H}(G_{Q,n},\rho_{\chi_{Q,n}}) $ are not isomorphic.
	
	\end{eg}

	\begin{bibdiv}
		\begin{biblist}[\resetbiblist{9999999}]*{labels={alphabetic}}
			\bib{ABPS17}{article}{
				author={Aubert, Anne-Marie},
				author={Baum, Paul},
				author={Plymen, Roger},
				author={Solleveld, Maarten},
				title={The principal series of $p$-adic groups with disconnected center},
				journal={Proc. Lond. Math. Soc. (3)},
				volume={114},
				date={2017},
				number={5},
				pages={798--854},
				issn={0024-6115},
				review={\MR {3653247}},
				doi={10.1112/plms.12023},
			}
			\bib{Bor76}{article}{
				author={Borel, Armand},
				title={Admissible representations of a semi-simple group over a local field with vectors fixed under an Iwahori subgroup},
				journal={Invent. Math.},
				volume={35},
				date={1976},
				pages={233--259},
				issn={0020-9910},
				review={\MR {0444849}},
				doi={10.1007/BF01390139},
			}
			\bib{BD}{article}{
				author={Brylinski, Jean-Luc},
				author={Deligne, Pierre},
				title={Central extensions of reductive groups by $\bold K_2$},
				journal={Publ. Math. Inst. Hautes \'Etudes Sci.},
				number={94},
				date={2001},
				pages={5--85},
				issn={0073-8301},
				review={\MR {1896177}},
				doi={10.1007/s10240-001-8192-2},
			}
			\bib{BK}{article}{
				author={Bushnell, Colin J.},
				author={Kutzko, Philip C.},
				title={Smooth representations of reductive $p$-adic groups: structure
					theory via types},
				journal={Proc. London Math. Soc. (3)},
				volume={77},
				date={1998},
				number={3},
				pages={582--634},
				issn={0024-6115},
				review={\MR{1643417}},
				doi={10.1112/S0024611598000574},
			}
			\bib{DGW}{article}{
			   author={Dong, Ping},
			author={Gao, Fan},
			author={Wang, Runze},
			title={Formal degrees of genuine Iwahori-spherical representations},
			journal={Q. J. Math.},
			volume={74},
			date={2023},
			number={4},
			pages={1597--1618},
			issn={0033-5606},
			review={\MR{4676560}},
			doi={10.1093/qmath/haad036},
			}
			\bib{FOS22}{article}{
				author={Feng, Yongqi},
				author={Opdam, Eric},
				author={Solleveld, Maarten},
				title={On formal degrees of unipotent representations},
				journal={J. Inst. Math. Jussieu},
				volume={21},
				date={2022},
				number={6},
				pages={1947--1999},
				issn={1474-7480},
				review={\MR {4515286}},
				doi={10.1017/S1474748021000062},
			}
			
			\bib{FP}{article}{
				author={Fr\u{a}\c{t}il\u{a}, Drago\c{s}},
				author={Prasad, Dipendra},
				title={Homological duality for covering groups of reductive $p$-adic
					groups},
				journal={Pure Appl. Math. Q.},
				volume={18},
				date={2022},
				number={5},
				pages={1867--1950},
				issn={1558-8599},
				review={\MR{4538041}},
			}
			
			\bib{GG}{article}{
				author={Gan, Wee Teck},
				author={Gao, Fan},
				title={The Langlands-Weissman program for Brylinski-Deligne extensions},
				language={English, with English and French summaries},
				note={L-groups and the Langlands program for covering groups},
				journal={Ast\'erisque},
				date={2018},
				number={398},
				pages={187--275},
				issn={0303-1179},
				isbn={978-2-85629-845-9},
				review={\MR {3802419}},
			}
			\bib{GaIc14}{article}{
				author={Gan, Wee Teck},
				author={Ichino, Atsushi},
				title={Formal degrees and local theta correspondence},
				journal={Invent. Math.},
				volume={195},
				date={2014},
				number={3},
				pages={509--672},
				issn={0020-9910},
				review={\MR {3166215}},
				doi={10.1007/s00222-013-0460-5},
			}
			\bib{GGK}{article}{
				author={Gao, Fan},
				author={Gurevich, Nadya},
				author={Karasiewicz, Edmund},
				title={Genuine pro-$p$ Iwahori--Hecke algebras, Gelfand--Graev representations, and some applications},
				status={(accepted to Journal of the European Mathematical Society), available at https://arxiv.org/abs/2204.13053},
			}
			\bib{GGK2}{article}{
				author={Gao, Fan},
				author={Gurevich, Nadya},
				author={Karasiewicz, Edmund},
				title={Genuine Whittaker functor and the quantum affine Schur--Weyl duality},
				status={preprint (2022), available at https://arxiv.org/abs/2210.16138},
			}
			
			\bib{GSS}{article}{
				author = {Gao, Fan},
				author = {Shahidi, Freydoon},
				author={Szpruch, Dani}
				year = {2021},
				month = {02},
				pages = {},
				title = {Restrictions, L-parameters, and local coefficients for genuine representations}
			
				status={(accepted to Mémoires de la Société Mathématique de France, 2023), available at https://arxiv.org/abs/2102.08859}
			}
			\bib{Go}{article}{
				  author={Goldstein, Daniel John},
				title={Hecke algebra isomorphisms for tamely ramified characters},
				note={Thesis (Ph.D.)--The University of Chicago},
				publisher={ProQuest LLC, Ann Arbor, MI},
				date={1990},
			
				review={\MR{2611915}},
			}
			\bib{HII}{article}{
				author={Hiraga, Kaoru},
				author={Ichino, Atsushi},
				author={Ikeda, Tamotsu},
				title={Formal degrees and adjoint $\gamma $-factors},
				journal={J. Amer. Math. Soc.},
				volume={21},
				date={2008},
				number={1},
				pages={283--304},
				issn={0894-0347},
				review={\MR {2350057}},
				doi={10.1090/S0894-0347-07-00567-X},
			}
			
			\bib{HIIc}{article}{
				author={Hiraga, Kaoru},
				author={Ichino, Atsushi},
				author={Ikeda, Tamotsu},
				title={Correction to: ``Formal degrees and adjoint $\gamma $-factors'' [J. Amer. Math. Soc. {\bf 21} (2008), no. 1, 283--304; MR2350057]},
				journal={J. Amer. Math. Soc.},
				volume={21},
				date={2008},
				number={4},
				pages={1211--1213},
				issn={0894-0347},
				review={\MR {2425185}},
				doi={10.1090/S0894-0347-08-00605-X},
			}
			\bib{HL}{article}{
				author={Howlett, R. B.},
				author={Lehrer, G. I.},
				title={Induced cuspidal representations and generalised Hecke rings},
				journal={Invent. Math.},
				volume={58},
				date={1980},
				number={1},
				pages={37--64},
				issn={0020-9910},
				review={\MR{0570873}},
				doi={10.1007/BF01402273},
			}
			\bib{Hum}{book}{  author={Humphreys, James E.},
				title={Reflection groups and Coxeter groups},
				series={Cambridge Studies in Advanced Mathematics},
				volume={29},
				publisher={Cambridge University Press, Cambridge},
				date={1990},
				pages={xii+204},
				isbn={0-521-37510-X},
				review={\MR{1066460}},
				doi={10.1017/CBO9780511623646},}
			\bib{MR4333663}{article}{
				author={Karasiewicz, Edmund},
				title={A Hecke algebra on the double cover of a Chevalley group over
					$\Bbb{Q}_2$},
				journal={Algebra Number Theory},
				volume={15},
				date={2021},
				number={7},
				pages={1729--1753},
				issn={1937-0652},
				review={\MR{4333663}},
				doi={10.2140/ant.2021.15.1729},
			}
			\bib{KL2}{article}{
				author={Kazhdan, David},
				author={Lusztig, George},
				title={Proof of the Deligne-Langlands conjecture for Hecke algebras},
				journal={Invent. Math.},
				volume={87},
				date={1987},
				number={1},
				pages={153--215},
				issn={0020-9910},
				review={\MR {862716}},
			}
			\bib{Mo}{article}{
				   author={Morris, Lawrence},
				title={Tamely ramified intertwining algebras},
				journal={Invent. Math.},
				volume={114},
				date={1993},
				number={1},
				pages={1--54},
				issn={0020-9910},
				review={\MR{1235019}},
				doi={10.1007/BF01232662},
			}
			\bib{Ree4}{article}{
				author={Reeder, Mark},
				title={Isogenies of Hecke algebras and a Langlands correspondence for ramified principal series representations},
				journal={Represent. Theory},
				volume={6},
				date={2002},
				pages={101--126},
				review={\MR {1915088}},
				doi={10.1090/S1088-4165-02-00167-X},
			}
			\bib{Ro}{article}{
			author={Roche, Alan},
			title={Types and Hecke algebras for principal series representations of
				split reductive $p$-adic groups},
			language={English, with English and French summaries},
			journal={Ann. Sci. \'{E}cole Norm. Sup. (4)},
			volume={31},
			date={1998},
			number={3},
			pages={361--413},
			issn={0012-9593},
			review={\MR{1621409}},
			doi={10.1016/S0012-9593(98)80139-0},
			}		
			\bib{Sav88}{article}{  author={Savin, Gordan},
				title={Local Shimura correspondence},
				journal={Math. Ann.},
				volume={280},
				date={1988},
				number={2},
				pages={185--190},
				issn={0025-5831},
				review={\MR{0929534}},
				doi={10.1007/BF01456050},}
			\bib{Sav04}{article}{
				author={Savin, Gordan},
				title={On unramified representations of covering groups},
				journal={J. Reine Angew. Math.},
				volume={566},
				date={2004},
				pages={111--134},
				issn={0075-4102},
				review={\MR {2039325}},
			}	
			\bib{MR4310011}{article}{
				author={Solleveld, Maarten},
				title={Affine Hecke algebras and their representations},
				journal={Indag. Math. (N.S.)},
				volume={32},
				date={2021},
				number={5},
				pages={1005--1082},
				issn={0019-3577},
				review={\MR{4310011}},
				doi={10.1016/j.indag.2021.01.005},
			}
			\bib{SS}{article}{
				author={Springer, T. A.},
				author={Steinberg, R.},
				title={Conjugacy classes},
				conference={
					title={Seminar on Algebraic Groups and Related Finite Groups},
					address={The Institute for Advanced Study, Princeton, N.J.},
					date={1968/69},
				},
				book={
					series={Lecture Notes in Math.},
					volume={Vol. 131},
					publisher={Springer, Berlin-New York},
				},
				date={1970},
				pages={167--266},
				review={\MR{0268192}},
			}
			\bib{Ste}{book}{
				author={Steinberg, Robert},
				title={Lectures on Chevalley groups},
				series={University Lecture Series},
				volume={66},
				edition={corrected edition},
				note={Notes prepared by John Faulkner and Robert Wilson;
					With a foreword by Robert R. Snapp},
				publisher={American Mathematical Society, Providence, RI},
				date={2016},
				pages={xi+160},
				isbn={978-1-4704-3105-1},
				review={\MR{3616493}},
				doi={10.1090/ulect/066},
			}
			\bib{MR3484112}{article}{
				author={Vigneras, Marie-France},
				title={The pro-$p$-Iwahori Hecke algebra of a reductive $p$-adic group I},
				journal={Compos. Math.},
				volume={152},
				date={2016},
				number={4},
				pages={693--753},
				issn={0010-437X},
				review={\MR{3484112}},
				doi={10.1112/S0010437X15007666},
			}
			\bib{We6}{article}{
				author={Weissman, Martin H.},
				title={L-groups and parameters for covering groups},
				language={English, with English and French summaries},
				note={L-groups and the Langlands program for covering groups},
				journal={Ast\'erisque},
				date={2018},
				number={398},
				pages={33--186},
				issn={0303-1179},
				isbn={978-2-85629-845-9},
				review={\MR {3802418}},
			}
			\bib{Zou23}{article}{
				title={Simple type theory for metaplectic covers of $\mathrm{GL}(r)$ over a non-archimedean local field}, 
				author={Jiandi Zou},
				year={2023},
				eprint={2308.16143},
				archivePrefix={arXiv},
				primaryClass={math.RT}
			}

		\end{biblist}
	\end{bibdiv}

\end{document}